	\newtheoremstyle{slanted}
	{}
	{}
	{\slshape}
	{}
	{\bfseries}
	{.}
	{ }
	{}
	\theoremstyle{slanted}
	\newtheorem{theo}{Theorem}[section]
	\newtheorem{prop}[theo]{Proposition}
	\newtheorem{lemma}[theo]{Lemma}
	\newtheorem{definition}[theo]{Definition}
	\newtheorem{corollary}[theo]{Corollary}
	\newtheorem{remark}[theo]{Remark}
	\newtheorem{example}[theo]{Example}
	\newcommand{\egdef}{:=}
	\DeclareMathOperator{\Id}{Id}	
	\newcommand{\tend}[3][]{\xrightarrow[#2\to#3]{#1}}
	\newcommand{\EE}{\mathbb{E}}
	\newcommand{\ind}[1]{\mathbbmss{1}_{#1}} 
	\newcommand{\ZZ}{\mathbb{Z}}
	\newcommand{\RR}{\mathbb{R}}
	\newcommand{\TT}{\mathbb{T}}
	\newcommand{\PP}{\mathbb{P}}
	\newcommand{\NN}{\mathbb{N}}
	\newcommand{\A}{\mathcal{A}}
	\newcommand{\B}{\mathcal{B}}
	\newcommand{\F}{\mathscr{F}}
	\renewcommand{\P}{\mathscr{P}}
	\newcommand{\Z}{\mathcal{Z}}
	\newcommand{\N}{\mathcal{N}}
	\newcommand{\PaP}{$\mathcal{P}a\mathcal{P}$}
\title{Poisson suspensions and SuShis}
\author{\'{E}lise Janvresse, Emmanuel Roy and Thierry de la Rue}
\address{\'Elise Janvresse: 
Laboratoire Amiénois de Mathématique Fondamentale et Appliquée, CNRS-UMR 7352, Université de Picardie Jules Verne, 33 rue Saint Leu, F80039 Amiens cedex 1,
France.}
\email{Elise.Janvresse@u-picardie.fr}
\address{Emmanuel Roy: Laboratoire Analyse, Géométrie et Applications, Université Paris 13 Institut Galilée,
99 avenue Jean-Baptiste Clément
F93430 Villetaneuse, France.}
\email{roy@math.univ-paris13.fr}
\address{Thierry de la Rue:
Laboratoire de Mathématiques Rapha\"el Salem,
Université de Rouen, CNRS,
Avenue de l'Université,
F76801 Saint \'Etienne du Rouvray, France.}
\email{Thierry.de-la-Rue@univ-rouen.fr}
\thanks{Research partially supported by French research group GeoSto
(CNRS-GDR3477)}
\begin{document}
\bibliographystyle{amsplain}

\keywords{Point process, Poisson suspension, joinings}
\maketitle

\begin{abstract}
In this paper, we prove that ergodic point processes with moments of all orders, driven by particular infinite measure preserving transformations, 
have to be a superposition of shifted Poisson processes. 
This rigidity result has a lot of implications in terms of joining and disjointness for the corresponding Poisson suspension. 
In particular, we prove that its ergodic self-joinings are Poisson joinings, which provides an analog, in the Poissonian context, of the GAG property for Gaussian dynamical systems.
\end{abstract}
\bigskip

\renewcommand{\abstractname}{Résumé}
\begin{center}
  \textbf{Suspensions de Poisson et SuShis}
\end{center}
\begin{abstract}
Dans cet article, nous démontrons qu'un processus ponctuel ergodique avec des moments de tous ordres, dirigé par une transformation pré\-ser\-vant une mesure infinie qui vérifie certaines propriétés, est nécessairement une superposition de processus de Poisson décalés. Ce résultat de rigidité a de nombreuses implications en termes de couplages et de disjonction pour la suspension de Poisson associée. En particulier, nous démontrons que ses autocouplages ergodiques sont des couplages poissoniens, obtenant ainsi un analogue, dans le contexte poissonien, de la propriété GAG des syst\`emes dynamiques gaussiens. 
\end{abstract}

\section{Introduction}

Central to probability theory are Gaussian and Poisson distributions.
In ergodic theory, they both play a particular role through canonical
constructions we briefly recall:
\begin{itemize}
\item Starting from a positive and finite symmetric Borel measure $\sigma$ on $\mathbb{T}$, there exists
a unique centered stationary real-valued Gaussian process $\left\{ X_{n}\right\} _{n\in\mathbb{Z}}$
whose coordinates admit $\sigma$ as spectral measure, that is
\[
\mathbb{E}\left[X_{0}X_{n}\right]=\widehat{\sigma}\left(n\right).
\]

\item Starting from a $\sigma$-finite dynamical system $\left(X,\mathcal{A},\mu,T\right)$,
we can build the Poisson suspension $\left(X^{*},\mathcal{A}^{*},\mu^{*},T_{*}\right)$,
which is the canonical space $\left(X^{*},\mathcal{A}^{*},\mu^{*}\right)$ of the Poisson point process of intensity $\mu$, 
enriched by the transformation
\[
T_{*}\left(\xi\right):=\xi\circ T^{-1}
\]
 (see below for a precise definition).
\end{itemize}
A striking theorem due to Foia\c{s} and Str$\breve{\mathrm{a}}$til$\breve{\mathrm{a}}$ (see \cite{FoiasStratila}) states that some measures $\sigma$
on $\mathbb{T}$, if appearing as spectral measure of some ergodic
stationary process, force the process to be Gaussian. This was considerably developed later (see \cite{LemParThou00Gausselfjoin} in particular) and
lead to some remarkable results.

In this paper, we obtain a Poisson counterpart of Foia\c{s}-Str$\breve{\mathrm{a}}$til$\breve{\mathrm{a}}$ theorem. 
We prove that some ergodic infinite measure preserving transformation,
taken as base system of an ergodic invariant point process with moments
of all orders, force the latter to be a superposition
of shifted Poisson point processes. 

\subsubsection*{Notations}
For any set $J$, we denote by $\#J$ the cardinality of $J$. If $\varphi$ is any measurable map from $(X,\A)$ to $(Y,\B)$, and if $m$ is a measure on $(X,\A)$, we denote by $\varphi_*(m)$ the pushforward measure of $m$ by $\varphi$.

\subsection{Random measures and point processes}

Let $X$ be a complete separable metric space and $\A$ be its Borel $\sigma$-algebra.  
Define $\widetilde{X}$ to be the space of \emph{boundedly finite measures} on $\left(X,{\A}\right)$, that is to say measures giving finite mass to any bounded Borel subset of $X$.
We refer to~\cite{DaleyVereJonesI} for the topological properties of $\widetilde{X}$. In particular, $\widetilde{X}$ can be turned into a complete separable metric space, and its Borel $\sigma$-algebra $\widetilde{\A}$ is generated by the maps $\widetilde X\ni \xi \mapsto \xi(A)\in\mathbb{R}_+\cup\left\{ +\infty\right\}$ for bounded $A\in\A$.

Let $X^{*}\subset \widetilde{X}$ be the subspace of simple counting measures, \textit{i.e.} whose elements are of the form 
\[
\xi=\sum_{i\in I}\delta_{x_{i}},
\]
where $I$ is at most countable, and $x_i\neq x_j$ whenever $i\neq j$. 
Because we restrict ourselves to boundedly finite measures, any bounded subset $A\subset X$ contains finitely many points of the family  $\{x_i\}_{i}$. Conversely, any countable family of points satisfying this property defines a measure $\xi\in X^*$ by the above formula. 
We define $\A^*$ as the restriction to $X^*$ of $\widetilde{\A}$.

In the paper, we consider a boundedly finite measure $\mu$ on $X$ and an invertible transformation $T$ on $X$ preserving $\mu$. 
We assume that $\mu(X)=\infty$ and that $(X,\A,\mu,T)$ is  conservative and ergodic. This implies in particular that $\mu$ is continuous.

Given the map $T$, for any $\sigma$-finite measure $\xi$, we define $T_*(\xi)$ as the pushforward measure of $\xi$ by $T$. 
In particular, if $\xi=\sum_{i\in I}\delta_{x_{i}}$,
\[
T_{*}\left(\xi\right)=\sum_{i\in I}\delta_{T\left(x_{i}\right)}.
\]
Observe that, even if $\xi\in {X^*}$, $T_*(\xi)$ is not necessarily in $X^*$ (the property of bounded finiteness may be lost by the action of $T$). 
Nevertheless, one can consider  the smaller space
$\bigcap_{n\in\ZZ}T_*^{-n}X^*$, on which $T_*$ is a bijective transformation.
If $m$ is a probability measure on $X^*$ which is concentrated on this smaller space, then it makes sense to say that $m$ is invariant by $T_*$.
If $m$ is such a $T_*$-invariant probability measure, then $T_*(\xi)\in X^*$ for $m$-almost all $\xi\in X^*$, 
and $(X^*,\A^*,m,T_*)$ is an invertible, probability preserving dynamical system. (The same remark holds if we replace $X^*$ by $\widetilde{X}$.)

We call \emph{point process on $X$} any random variable $N$ defined on some probability space $\left(\Omega,\F ,\mathbb{P}\right)$ taking values in $\left(X^{*},{\A}^{*}\right)$. 
In this case, for $\omega\in\Omega$, $N(\omega)$ can be viewed as a (random) set of points in $X$, and  ``$x\in N(\omega)$" means $N(\omega)(\{x\})=1$.  
As usual in probability theory, we will often 
omit $\omega$ in the formulas.

\begin{definition}
Let $\left(\Omega,\F ,\mathbb{P}\right)$ be a probability space, endowed with a measure preserving invertible transformation $S$.
A \emph{$T$-point process} defined on $\left(\Omega,\F ,\mathbb{P},S\right)$ is a point process $N: \Omega\to X^*$, 
such that
\begin{itemize}
\item for any set $A\in{\A}$, $N(\omega)\left(A\right)=0$ for $\mathbb{P}$-almost all $\omega$
whenever $\mbox{\ensuremath{\mu}}\left(A\right)=0$;
\item    for $\mathbb{P}$-almost all $\omega$,  for any set $A\in{\A}$,
$N(S\omega)\left(A\right)=N(\omega)\left(T^{-1}A\right)$.
\end{itemize}
\end{definition}

Thus, a $T$-point process $N$ implements a factor relationship between the dynamical systems $\left(\Omega,\F ,\mathbb{P},S\right)$
and $\left(X^*,{\A}^{*},m,T_{*}\right)$ where $m$
is the pushforward measure of $\,\mathbb{P}$ by $N$.

Observe that the formula $A\in{\A}\mapsto\mathbb{E}\left[N\left(A\right)\right]$
defines a $T$-invariant measure which is absolutely continuous with
respect to $\mu$. It is called the \emph{intensity} of $N$ and as
soon as it is $\sigma$-finite, by ergodicity of $\left(X,{\A},\mu,T\right)$, it is a multiple of $\mu$:
\[
\mathbb{E}\left[N\left(\cdot\right)\right]=\alpha\mu\left(\cdot\right)
\]
for some $\alpha>0$. In this  case, we will say that $N$ is \emph{integrable}.
More generally, setting 
\[{\A}_{f}:=\left\{ A\in{\A},\;0<\mu\left(A\right)<+\infty\right\},\] 
we have:

\begin{definition}
A $T$-point process $N$ on $\left(\Omega,\F ,\mathbb{P},S\right)$
is said to have \emph{moments of order $n\ge1$} if, for  all  $A\in{\A}_f$, $\mathbb{E}\left[\left(N\left(A\right)\right)^{n}\right]<+\infty$.
In this case, for $k\le n$, the formula
\[
M_{k}^{N}(A_1\times\cdots\times A_k) \egdef \mathbb{E}\left[N\left(A_{1}\right)\times\cdots\times N\left(A_{k}\right)\right]
\]
defines a boundedly finite $T\times\cdots\times T$-invariant measure
$M_{k}^{N}$ on $\left(X^{k},{\A}^{\otimes k}\right)$ called
the \emph{$k$-order moment measure}.

A $T$-point process with moments of order $2$ is said to be \emph{square integrable}.
\end{definition}

\subsection{Poisson point process and SuShis}

The most important $T$-point processes are Poisson point processes, let
us recall their definition.
\begin{definition}\label{def:Poisson}
A random variable $N$ with values in $(X^*, \A^*)$ is a \emph{Poisson point process of intensity $\mu$} if 
for any $k\ge1$, for any mutually disjoint sets $A_{1},\dots,A_{k}\in{\A}_{f}$,
the random variables $N\left(A_{1}\right),\dots,N\left(A_{k}\right)$
are independent and Poisson distributed with respective parameters
$\mu\left(A_{1}\right),\dots,\mu\left(A_{k}\right)$. 
\end{definition}

Such a process always exists, and its distribution $\mu^{*}$ on $X^*$ is uniquely determined by the preceding conditions.
Since $T$ preserves $\mu$, one easily checks that $T_{*}$ preserves~$\mu^{*}$.

\begin{definition}
The probability-preserving dynamical system $\left(X^{*},{\A}^{*},\mu^{*},T_{*}\right)$ is called
the \emph{Poisson suspension} over the base $\left(X,{\A},\mu,T\right)$.
\end{definition}

The basic result (see \textit{e.g.}~\cite{Roy2007}) about Poisson suspensions states that
$\left(X^{*},{\A}^{*},\mu^{*},T_{*}\right)$ is ergodic (and
then weakly mixing) if and only if there is no $T$-invariant
set in ${\A}_{f}$. In particular this is the case if $\left(X,{\A},\mu,T\right)$
is ergodic and $\mu$ infinite.

Defining $N$ on the probability space $\left(X^{*},{\A}^{*},\mu^{*}\right)$ as the identity map provides an example of a $T$-point process, the underlying measure-preserving transformation being $S=T_*$ in this case.

More generally, we can define a class of $T$-point processes which are constructed from independent Poisson point processes.

\begin{definition}
Let $(N_i)_{i\in I}$ be a countable family of independent Poisson $T$-point processes of respective intensities $\alpha_i\mu$, defined on the same probability space $(\Omega,\F,\PP)$. For each $i\in I$, consider a finite subset $J_i\subset \ZZ$, with 
\[
  \sum_{i\in I} \alpha_i \#J_i <\infty.
\]
Then, the process $N$ defined by
\[
  N(\omega)\egdef \sum_{i\in I}\sum_{j\in J_i} T_*^j (N_i(\omega))
\]
is a particular integrable $T$-point process called \emph{Superposition of Shifted Poisson Processes}, which we abbreviate in \emph{SuShi}.
\end{definition}

Note that the aperiodicity of $T$ ensures that the realizations $N(\omega)$ of the SuShi are  indeed in $X^*$ (the supports of the $T_*^j (N_i(\omega))$ are pairwise disjoint). 

It is easy to see that a SuShi always admits a canonical decomposition, in which the subsets $J_i$ are distinct, contained in 
$\ZZ_+$, and $0\in J_i$ for all $i$: indeed, consider $m_i\egdef \min J_i$, and if $m_i\neq 0$, replace $N_i$ by $T_*^{m_i} N_i$ and $J_i$ by $J_i-m_i$. Then join together all Poisson point processes $N_i$ sharing the same finite set $J_i$.

In general, a SuShi does not have moments of all orders (with  arguments similar to those used at the end of the proof of Proposition~\ref{prop:Bout d'orbite infini}, one can show that, if $\sum_{i\in I} \alpha_i (\#J_i)^2 =\infty$, then there exists $A\in\A_f$ whith $\EE\left[N(A)^2\right]=\infty$). However, if the numbers $\#J_i$, $i\in I$ are uniformly bounded, then moments of all orders exist for the SuShi.

\subsection{Roadmap of the paper}

The paper is organized as follows. Section~\ref{sec:2} is devoted to general results about $T$-point processes, and their behaviour regarding $T$-orbits. In Section~\ref{sec:orbits}, we consider the following question: can a $T$-point process assign infinite mass to $T$-orbits?  We show in Proposition~\ref{prop:Bout d'orbite infini} that, if a $T$-point process is square-integrable, then it almost surely gives finite mass to any $T$-orbit. We also explain how to construct a (non-square-integrable) $T$-point process which almost surely assigns mass 0 or $\infty$ to any $T$-orbit (Proposition~\ref{prop:counterexample}). In Section~\ref{sec:separating_orbits}, we describe a canonical decomposition of a $T$-point process assigning finite mass to $T$-orbits (Proposition~\ref{prop:Decomposition}), which is naturally related to the form taken by SuShis. In Section~\ref{sec:interaction}, we first provide a useful criterion to detect whether several $T$-point processes, defined on the same probability space, charge 
points in the same $T$-orbits (Lemma~\ref{lemma:graph measures}). Then we prove in Proposition~\ref{prop:PoissonFree} that any $T$-point process whose 2-order moment measure coincides with the one of a Poisson process is \emph{$T$-free}, that is to say it almost surely charges at most one point in any $T$-orbit. 

Section~\ref{sec:getting_sushis} presents the key rigidity results of the paper. At the beginning of this section, we introduce two additional properties of the infinite measure preserving dynamical system $(X,\mathcal{A},\mu,T)$, denoted by~\eqref{P1} and~\eqref{P2}. The former simply says that any direct product of a finite number of copies of this system remains ergodic, and the latter is a strong restriction on the set of $T^{\times n}$-invariant measures on $X^n$. 
As proved in~\cite{nfc}, there exists an infinite measure preserving version of the Chacon transformation satisfying these properties. Then we show that, under assumptions~\eqref{P1} and~\eqref{P2}, $T$-point processes with moments of all orders are SuShis (Theorem~\ref{thm:sushi}). 
An important step for the proof of this result is the particular case where the $T$-point process is $T$-free: in this situation we prove that it has to be a Poisson process (Theorem~\ref{theo:Free}). We also need a result 
ensuring the independence of Poisson $T$-point processes which 
do not charge the same $T$-orbits, provided by Proposition~\ref{prop:dissociation}.

Section~\ref{sec:joinings} deals with the consequences of the preceding results regarding self-joinings and factors of the Poisson suspension. We begin by recalling classical results on the $L^2$ structure of a Poisson suspension (Section~\ref{sec:L2Poisson}), and presenting the central notions of Poisson factors and Poisson joinings (Section~\ref{sec:PoissonJoinings}). Then we prove that, if $T$ satisfies~\eqref{P1} and~\eqref{P2}, any ergodic infinite self-joining of the associated Poisson suspension is realized as a factor of a universal Poisson suspension (Theorem~\ref{theo:Any-ergodic-countable}).  
In fact, the result even holds for an ergodic joining of a countable family of Poisson suspensions which are all of the form $T_*^{(\alpha)}$, where for each $\alpha>0$,  $T_*^{(\alpha)}$ denotes the Poisson suspension $(X_*, \mathcal{A}^*, (\alpha\mu)^*, T_*)$.
As a corollary, we get in Theorem~\ref{theo:PaPandPrime} that such a Poisson suspension satisfies the \emph{\PaP\ property}, which means that any ergodic self-joining of this system is Poisson. This \PaP\ property turns out to be, in the Poissonian context, the analog of the so-called \emph{GAG property} for Gaussian stationary processes (see \cite{LemParThou00Gausselfjoin}). We also present in Section~\ref{sec:PaP} some general properties of \PaP\ Poisson suspensions.

In Section~\ref{sec:lemanczyk}, we see how the suspensions $T_*^{(\alpha)}$ can be used to obtain a new kind of counterexample to the famous question of Furstenberg: if two ergodic dynamical systems are not disjoint, do they share a common factor ? All counterexamples known so far have the property that one of the two non-disjoint systems shares a common factor with a distal extension of the other one. In  Proposition~\ref{prop:Talpha_Tbeta}, we show that, if $\alpha\neq\beta$, $T_*^{(\alpha)}$ and $T_*^{(\beta)}$ are two non-disjoint systems which are prime, but neither of them is a factor of a distal extension of the other one, answering negatively a question asked by Lema\'nczyk. 

In Section~\ref{sec:disjointness}, we also present disjointness results for the Poisson suspension, still assuming~\eqref{P1} and~\eqref{P2}, and a further technical assumption on $T$ which is the existence of a measurable law of large number. (It is not clear whether, in general,~\eqref{P1} and~\eqref{P2} imply this existence, however these three properties are satisfied for the nearly finite Chacon transformation.)
Under these assumptions, we prove in particular the following key result: if a dynamical system $S$ is not disjoint from the Poisson suspension $(X_*, \mathcal{A}^*, \mu^*, T_*)$, then there exists $\alpha>0$ such that 
$T_*^{(\alpha)}$ appears as a factor of $S$ (Proposition~\ref{prop:disjointness-alpha}). Applications of this result are developed in Section~\ref{sec:disjointness_from_classical_classes}, where we prove that, under the same assumptions, the Poisson suspension is disjoint from any locally rank one dynamical system (Theorem~\ref{theo:disjointness_local_rank1}), and from any standard Gaussian dynamical system (Theorem~\ref{theo:disjointnessPoissonGauss}).


\section{General results about  $T$-point processes and $T$-orbits}
\label{sec:2}

\subsection{Number of points in orbits}
\label{sec:orbits}

\begin{prop}
\label{prop:Bout d'orbite infini}
  Let $N$ be a square-integrable $T$-point process  with intensity $\mu$. Then $N$ almost surely gives finite measure to any $T$-orbit.  
\end{prop}

\begin{proof}
Let  $N$ be an integrable $T$-point process  with intensity $\mu$ defined on $(\Omega,\F,\PP,S)$.
We prove the proposition in the following form: assume that, with positive probability, there exists some $x\in N(\omega)$ with 
  \[
    N(\omega)\left(\{T^kx:\ k\in\ZZ\}\right)=\infty.
  \]
  Then, we can find $A\in \A_f$ such that  
  \[
    \EE\left[N^2(A)\right]=\infty.
  \]
The first step is to prove that
  \begin{equation}
    \label{eq:firststep}
    \forall m,M>0, \text{ there exists }A\subset X\text{ with }\mu(A)=m\text{ and }\EE\left[N^2(A)\right]\ge M \mu(A).
  \end{equation}

  The main tool for this proof will be the family of Palm probability measures ${(\PP_x)}_{x\in X}$ associated with the point process $N$. For a detailed presentation of Palm measures, we refer the reader to \cite{DaleyVereJones}, Chapter 13. Recall that for $\mu$-almost each $x\in X$, $\PP_x$ is a probability measure on $X^{*}$ which can be interpreted as the distribution of $N$ conditioned on $x\in N$. For each measurable $U\subset X^{*}$, $x\mapsto \PP_x(U)$ is measurable, and we have for each measurable $A\subset X$
  \[
    \EE\left[ \ind{U}(N) N(A) \right] = \int_A \PP_x(U) d\mu(x).
  \]
  More generally, if $g$ is a positive measurable function on $X\times X^{*}$, and if we denote by $\EE_x[\,\cdot\,]$ the expectation with respect to $\PP_x$, we have
  \begin{equation}
    \label{eq:Palm}
    \EE\left[ \int_X g(x,N) dN(x) \right]  = \int_X \EE_x[g(x,\xi)] \, d\mu(x).
  \end{equation}
We claim that, for $\mu$-almost all $x$, 
\begin{equation}
\label{eq:p}
  \PP_{Tx}=(T_*)_* \PP_x.
\end{equation}
 Indeed, as $\PP$ is $S$-invariant, for each measurable $U\subset X^{*}$ and each measurable $A\subset X$, we can write
\begin{align*}
  \int_A \PP_x(U)\, d\mu(x) &= \EE\left[ \ind{U}(N) N(A) \right]\\
  &=\EE\left[ \left(\ind{U}(N) N(A)\right)\circ S \right]\\
  &= \EE\left[ \ind{T_*^{-1}U}(N) N(T^{-1}A) \right]\\
  &= \int_{T^{-1}A} \PP_x(T_*^{-1}U) \,d\mu(x).
\end{align*}
But on the other hand, using $T$-invariance of $\mu$, we also have
\[
 \int_A \PP_x(U)\, d\mu(x) = \int_{T^{-1}A} \PP_{Tx} (U) \, d\mu(x). 
\]
This yields, for each measurable $U\subset X^{*}$,  $\PP_{Tx} (U) = \PP_x(T_*^{-1}U)$,  and \eqref{eq:p} follows.

Now, for each $x\in X$, let us introduce the map $\varphi_x:\ X^*\to\{0,1\}^\ZZ$ defined by
\[
  \forall k\in\ZZ,\ \left(\varphi_x(\xi)\right)_k\egdef \xi\left(T^kx\right).
\]
Observe that 
\begin{equation}
  \label{eq:fi}
\varphi_{Tx}\circ T_* = \varphi_x.
\end{equation}
Define also the probability measure $\nu_x$ on $\{0,1\}^\ZZ$ as the pushforward measure of $\PP_x$ by $\varphi_x$.
By~\eqref{eq:p} and \eqref{eq:fi}, we get, for $\mu$-almost all $x\in X$
\[
\nu_{Tx} = (\varphi_{Tx})_*(\PP_{Tx}) = (\varphi_{Tx}\circ T_*)_*(\PP_x) = (\varphi_{x})_*(\PP_x) = \nu_x.
\]
By ergodicity of $T$, it follows that there exists a probability measure $\nu$ on  $\{0,1\}^\ZZ$ such that 
$\nu_x=\nu$ for $\mu$-almost all $x\in X$.

Let us consider the following measurable function $g$ defined on $X\times X^*$:
\[
  g(x,\xi)\egdef \begin{cases}
                   1 &\text{ if }\sum_{k\in\ZZ} \xi(T^kx)=\infty\\
                   0 &\text{ otherwise.}
                 \end{cases}
\]
The hypothesis made on  the point process $N$ yields
\[
  \EE\left[ \int_X g(x,N) dN(x) \right] > 0.
\]
By~\eqref{eq:Palm}, we get that for $x$ in a subset of positive measure in $X$, 
\begin{align*}
  0 &< \EE_x[g(x,\xi)] \\
  & = \PP_x \left(\sum_k \xi(T^kx)=\infty \right) \\
  & = \PP_x \left(\sum_k {\bigl(\varphi_x(\xi)\bigr)}_k=\infty \right) \\
  & = \nu\left( \left\{ \zeta\in\{0,1\}^\ZZ: \sum_{k\in\ZZ}\zeta_k = \infty \right\}\right).
\end{align*}
It follows that
\[
  \int_{\{0,1\}^\ZZ} \sum_{k\in\ZZ} \zeta_k \, d\nu(\zeta) =\infty, 
\]
and we have
\[ \int_{\{0,1\}^\ZZ} \sum_{k\in\ZZ_+} \zeta_k \, d\nu(\zeta) =\infty,
  \text{ or }\int_{\{0,1\}^\ZZ} \sum_{k\in\ZZ_-} \zeta_k \, d\nu(\zeta) =\infty.
\]
Assume without loss of generality that the former case occurs. Then, for a given $M>0$, there exists a large integer $k_M$ such that 
\[
  \int_{\{0,1\}^\ZZ} \sum_{0\le k\le k_M} \zeta_k \, d\nu(\zeta) \ge 2M.
\]
By Rokhlin's tower theorem (\cite{Aaronson}, Theorem~1.5.9), for any $m>0$, there exists $B\subset X$, $\mu(B)=\frac{m}{2k_M+1}$, such that the sets $T^kB$, $-k_M\le k\le k_M$ are pairwise disjoint. Now set $A\egdef\sqcup_{-k_M\le k\le k_M}T^kB$, so that $\mu(A)=m$, and $A_-\egdef\sqcup_{-k_M\le k\le 0}T^kB$. Applying~\eqref{eq:Palm} with $g(x,\xi)=\ind{A}(x)\xi(A)$, we get
\[
  \EE \left[N^2(A)\right] = \int_A \EE_x [\xi(A)]\, d\mu(x) \ge  \int_{A_-} \EE_x [\xi(A)]\, d\mu(x).
\]
But, for $x\in A_-$, $T^k(x)\in A$ for each $0\le k\le k_M$, therefore
\[
  \EE_x [\xi(A)] \ge \EE_x \left[ \sum_{0\le k\le k_M} \xi(T^kx) \right] = \int_{\{0,1\}^\ZZ} \sum_{0\le k\le k_M} \zeta_k \, d\nu(\zeta) \ge 2M.
\]
Since $\mu(A_-)\ge\mu(A)/2$, we get~\eqref{eq:firststep}.

To conclude the proof of the proposition, for each $\ell\ge1$, applying~\eqref{eq:firststep}, we find $A_\ell\subset X$ satisfying $\mu(A_\ell)=1/\ell^2$ and 
$\EE[N^2(A_\ell)]\ge \ell^3\mu(A_\ell)$. Set $A\egdef \bigcup_{\ell\ge1}A_\ell$. Then $A\in\A_f$, but for each $\ell$, 
$\EE[N^2(A)]\ge\EE[N^2(A_\ell)]\ge\ell$.
\end{proof}

Without the finiteness of the second moment, we cannot conclude that the $T$-point process assigns finite mass to any $T$-orbit. Indeed, we have the following proposition. 

\begin{prop}
  \label{prop:counterexample}
  For any ergodic conservative dynamical system $(X,\A,\mu,T)$ with $\mu(X)=\infty$, there exists a $T$-point process $N$ with intensity $\mu$ such that, with probability 1, for any $x\in N(\omega)$, 
  \[
    N(\omega)\left(\{T^kx:\ k\in\ZZ\}\right)=\infty.
  \]
\end{prop}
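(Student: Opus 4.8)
The plan is to build $N$ by superposing, over all the ``$T$-orbit blocks'' of a Rokhlin tower, a single point replicated along the whole orbit. More precisely, I would fix a sequence of Rokhlin towers that exhaust $X$ and, on each level, place a point which I then translate by every power of $T$; doing this with small enough intensities on a nested sequence of towers produces a genuine (boundedly finite) point process whose points come in full $T$-orbits, so that each realized orbit carries infinitely many points. The key is that although ``one point plus all its $T$-translates'' is not a boundedly finite measure, we only ever ask the final $N$ to be boundedly finite, and a careful choice of intensities makes this work.

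Concretely, I would proceed as follows. First, since $(X,\A,\mu,T)$ is conservative ergodic with $\mu(X)=\infty$, I can by Rokhlin's tower theorem (\cite{Aaronson}, Theorem~1.5.9) find, for each $n\ge 1$, a set $B_n\in\A_f$ whose iterates $B_n, TB_n,\dots,T^{h_n-1}B_n$ are pairwise disjoint, with heights $h_n\to\infty$; I would also arrange $\mu(B_n)h_n$ to be comparable to a summable sequence, say $\mu(B_n)h_n = 2^{-n}$, and that the union of the towers has full measure (this is where ergodicity/conservativity and the infiniteness of $\mu$ are used). On the base $B_n$ of the $n$-th tower I put a Poisson point process $P_n$ of intensity $\mu|_{B_n}$, with the $P_n$ independent, and then define
\[
  N \egdef \sum_{n\ge1}\ \sum_{k\in\ZZ} T_*^k\bigl(P_n\bigr).
\]
Then I check three things: (i) $N$ is boundedly finite a.s.\ --- on a bounded set $A$, the contribution of tower $n$ is controlled by $N$-many translates of $P_n$ hitting $A$, whose expectation is bounded by the number of levels of the $n$-th tower meeting $A$ times $\mu(B_n)$, hence by $C_A\,\mu(B_n)$ up to a finite factor coming from how many towers reach $A$; summability of $\mu(B_n)h_n$ and conservativity make the total expectation finite, so $N(A)<\infty$ a.s.; (ii) the intensity of $N$ is $\sum_n \sum_k T_*^k(\mu|_{B_n}) = \sum_n \sum_{k\in\ZZ}\mu|_{T^kB_n}$, which by the disjointness within each tower and the fact that $\bigcup_n\bigcup_k T^kB_n$ covers $X$ (again conservativity/ergodicity) equals $\mu$, possibly after rescaling the $P_n$ by constants $\alpha_n$ to make the total come out exactly to $\mu$ --- more carefully, I would instead tile $X$ by one such tower construction so that the levels $T^kB_n$ partition $X$, and then the intensity is exactly $\mu$; (iii) $T$-equivariance: $T_*N = \sum_n\sum_k T_*^{k+1}P_n = N$ in distribution, with the underlying $S$ being the obvious shift on the product probability space carrying the $(P_n)$; and $N(A)=0$ a.s.\ when $\mu(A)=0$ because each $P_n$ has intensity absolutely continuous w.r.t.\ $\mu$ and $T$ preserves $\mu$.

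Finally, for the orbit statement: if $x\in N(\omega)$, then by construction $x\in T^k(\text{supp }P_n)$ for some $n,k$, i.e.\ $T^{-k}x$ is a point of $P_n$; but then $T^j(T^{-k}x)=T^{j-k}x$ is a point of $T_*^{j-k}P_n\le N$ for every $j\in\ZZ$, so the entire orbit $\{T^m x : m\in\ZZ\}$ is contained in $N(\omega)$, whence $N(\{T^m x : m\in\ZZ\})=\infty$ since the orbit is infinite by aperiodicity of $T$ (which follows from $\mu$ being continuous and ergodic with infinite mass). The main obstacle is point~(i): reconciling ``every point is dragged along its whole infinite orbit'' with bounded finiteness of $N$. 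The resolution is quantitative --- one must choose the tower heights and base measures so that $\sum_n \mu(B_n)\cdot(\text{number of levels of tower } n \text{ meeting a fixed bounded set})<\infty$; using conservativity, a fixed bounded $A$ is met by only finitely many complete towers and, within the tiling, each tower contributes at most $h_n$ levels but the relevant sum over $n$ telescopes against $\mu(B_n)h_n=2^{-n}$. I would present this estimate carefully, as it is the crux, and treat the equivariance and intensity bookkeeping as routine.
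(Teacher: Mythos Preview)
Your construction has a fatal gap at the step you yourself flag as the crux, namely bounded finiteness. For any single point $x\in P_n$, the measure $\sum_{k\in\ZZ}\delta_{T^kx}$ already gives infinite mass to every $A\in\A_f$: since $T$ is conservative ergodic and $\mu(A)>0$, the orbit $\{T^kx\}_{k\in\ZZ}$ visits $A$ infinitely often for $\mu$-a.e.\ $x$. Hence as soon as some $P_n$ has a point (which happens a.s.), your process $N=\sum_n\sum_{k\in\ZZ}T_*^kP_n$ satisfies $N(A)=\infty$. Your proposed estimate ``number of levels of tower $n$ meeting $A$ times $\mu(B_n)$'' silently replaces the sum over all $k\in\ZZ$ by the sum over $0\le k<h_n$; but you are dragging each point along its \emph{entire} orbit, not just through the tower. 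The intensity computation fails for the same reason: $\sum_{k\in\ZZ}\mu|_{T^kB_n}$ is not $\mu$ restricted to the tower, it is an infinite measure (it assigns $\sum_{k\in\ZZ}\mu(T^{-k}A\cap B_n)=\int_{B_n}\sum_{k\in\ZZ}\ind{A}(T^kx)\,d\mu(x)=\infty$ to any $A\in\A_f$). Your fallback ``tile $X$ so that the levels partition $X$'' does not help: if you restrict to finitely many levels you lose the infinite-orbit conclusion, and if you keep all $k\in\ZZ$ you are back to infinite mass. In a conservative system there is simply no way to put the \emph{full} $T$-orbit of a point into a boundedly finite measure.

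The paper's proof circumvents exactly this obstruction. It passes to a product $X\times Y$ with an auxiliary conservative ergodic $(Y,\nu,R)$ chosen (via a result in Aaronson's book) so that $T\times R$ is \emph{totally dissipative}. On a dissipative system, full orbits \emph{are} locally finite, so one can legitimately take a Poisson process on a wandering fundamental domain and push it along the whole $(T\times R)$-orbit to get a boundedly finite $\eta$ on $X\times Y$. One then fixes $B\subset Y$ with $\nu(B)=1$ and projects $\eta|_{X\times B}$ to $X$. The resulting $\xi$ has intensity exactly $\mu$, and for each point $x\in\xi$ there is $y\in B$ with $(x,y)\in\eta$; conservativity of $R$ forces $R^ny\in B$ for infinitely many $n$, and since $\eta$ contains the full $(T\times R)$-orbit of $(x,y)$, each such $n$ gives $T^nx\in\xi$. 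The key idea you are missing is this two-scale mechanism: produce full orbits in a larger \emph{dissipative} system, then sample them back in $X$ along an infinite but sparse set of return times.
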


\begin{proof}
  According to Corollary~5.3.4 in~\cite{Aaronson}, there exists a conservative ergodic dynamical system $(Y,\B,\nu,R)$ such that the direct product
  $(X\times Y,\A\otimes\B,\mu\otimes\nu,T\times R)$ is totally dissipative. Therefore, there exists a wandering set $M\subset X\times Y$ such that 
  the sets $(T\times R)^iM$, $i\in\ZZ$, are pairwise disjoint, and such that
  $X\times Y = \bigsqcup_{i\in\ZZ}(T\times R)^iM$. 
  
  Let $\eta_0$ be a Poisson process on $M$ with intensity $\mu\otimes\nu|_M$, and consider, for any $i\in \ZZ$, $\eta_i\egdef (T\times R)_*^i \eta_0$: this is a Poisson process on $(T\times R)^iM$ with intensity
  $\mu\otimes\nu|_{(T\times R)^iM}$.
  Then, set $\eta\egdef\sum_{i\in\ZZ}\eta_i$, which is a point process on $X\times Y$ with intensity $\mu\otimes\nu$. We claim now that 
  \begin{equation}
    \label{eq:invariance_by_TcroixId}
    \eta \text{ and }(T\times\Id)_*\eta\text{ have the same law.}
  \end{equation}
  Indeed, set $\tilde M\egdef (T\times\Id)M$, and partition this set into subsets $\tilde M_i\egdef\tilde M\cap (T\times R)^iM$. Then, for all $i\in\ZZ$, consider $M_i\egdef (T\times R)^{-i}\tilde M_i= (T\times R)^{-i}\tilde M\cap M$. But $T\times \Id$ commutes with $T\times R$, therefore the subsets  $(T\times R)^{-i}\tilde M$ also form a partition of $X\times Y$. It follows that the subsets $M_i$ form a partition of $M$.
  Moreover, by definition of $\eta$, 
  \[
    \eta|_{\tilde M_i}= (T\times R)^i_* \eta|_{M_i},
  \]
  and since the point processes $\eta|_{M_i}$ are independent Poisson processes, $\tilde\eta_0\egdef\eta|_{\tilde M}$ is itself a Poisson process of intensity $\mu\otimes\nu|_{\tilde M}$.
  Starting from this Poisson process defined on $\tilde M$, we can in the same way construct the point process 
  \[\tilde\eta\egdef\sum_{i\in\ZZ}(T\times R)_*^i \tilde\eta_0, \]
  which has the same distribution as $(T\times \Id)_*\eta$. But on the other hand, we have $\tilde\eta=\eta$, because these two point processes coincide on $\tilde M$ and both charge full orbits of $T\times R$. This proves~\eqref{eq:invariance_by_TcroixId}. 
  
  Finally, let us fix a measurable subset $B\subset Y$ with $\nu(B)=1$. Replacing if necessary $B$ by 
  $B\cap\{y\in Y: R^ny\in B\text{ for infinitely many integers }n\}$, which is still of measure~1, we can assume that any $y\in B$ returns infinitely often in $B$. Consider the point process on $X$
  defined by 
  \[
    \xi\egdef (\pi_X)_* \left(\eta|_{X\times B}\right),
  \]
  where $\pi_X:X\times Y\to X$ stands for the projection on the $X$ coordinate. Then, the intensity of $\xi$ is $\mu$, and by~\eqref{eq:invariance_by_TcroixId}, $\xi$ and $T_*\xi$ have the same law. Now, for any $x\in\xi$, there exists $y\in B$ such that $(x,y)\in \eta\cap X\times B$. Then there exist infinitely many integers $n$ such that $R^ny\in B$, hence such that $(T^nx,R^ny)\in \eta\cap X\times B$, and then $T^nx\in \xi$. We get the announced $T$-point process $N$ by considering $N\egdef\Id$ on $\Omega\egdef X^*$, equipped with the probability measure $\PP$ defined as the law of $\xi$, which is invariant by $S\egdef T_*$.
\end{proof}

\subsection{Separating orbits}
\label{sec:separating_orbits}
The next definition deals with the interactions between $T$-point
processes.
\begin{definition}
Two $T$-point processes $N_{1}$ and $N_{2}$ defined on $\left(\Omega,\F ,\mathbb{P},S\right)$
are said to be \emph{($T$-)dissociated} if, for $\PP$-almost all $\omega$, for all $k\in\mathbb{Z}$,
$N_{1}(\omega) \cap N_{2}\left(S^{k}\omega\right)= \emptyset$.
\end{definition}
Of course, a $T$-point process is never dissociated with itself,
however we have the following situation:
\begin{definition}
A $T$-point process $N$ is called \emph{($T$-)free} 
if for $\PP$-almost all $\omega$, for all $k\in\mathbb{Z}^{*}$,
$N(\omega) \cap N\left(S^{k}\omega\right)= \emptyset$.
\end{definition}

\begin{prop}
\label{prop:Decomposition}Let $N$ be a $T$-point
process on $\left(\Omega,\F ,\mathbb{P},S\right)$, which almost-surely assigns finite mass to any $T$-orbit. Then there
exist a finite or countable set $I$, a family $\left\{ F_{i}\right\} _{i\in I}$ of finite subsets of $\mathbb{Z_+}$, 
and a family  $\left(N_{F_{i}}\right)_{i\in I}$ of  free $T$-point processes,
measurable with respect to $N$ and mutually dissociated, such that
\[
N=\sum_{i\in I}\left(\sum_{k\in F_{i}}N_{F_{i}}\circ S^{k}\right),\;\mathbb{P}\text{-a.s.}
\]
\end{prop}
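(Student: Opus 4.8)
The plan is to decompose the point process according to the combinatorial "shape" each point's $T$-orbit imposes on $N$. For $\mathbb{P}$-almost every $\omega$ and every point $x\in N(\omega)$, the set $O(x)\egdef\{T^kx:k\in\ZZ\}$ carries finitely many points of $N(\omega)$ by hypothesis. Looking at $x$, I record the finite set $\{k\in\ZZ: T^kx\in N(\omega)\}$; since $T$ is aperiodic this set is well-defined, and translating it so that its minimum is $0$ produces a finite subset $F\subset\ZZ_+$ with $0\in F$. This gives, for each point $x\in N$, a well-defined "type" $F(x)$. Because the family of finite subsets of $\ZZ_+$ containing $0$ is countable, I get a countable index set $I$ (the types that actually occur with positive probability), and I let $N_{F_i}$ be the point process consisting of those $x\in N$ whose type is $F_i$ \emph{and} which are the "leftmost" point of their orbit-cluster, i.e.\ $T^{-1}x\notin N$ among the relevant block. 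The first key step is to check that each $N_{F_i}$ is measurable with respect to $N$: this is a countable Boolean combination of events of the form $\{T^kx\in N\}$, so it follows from the definition of $\A^*$ and the measurability of $(x,\xi)\mapsto\xi(\{T^kx\})$.

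Next I verify the structural claims. \emph{Equivariance:} since $N\circ S = N\circ T_*^{-1}$ (in the sense of the $T$-point process definition) and the type of a point is defined purely orbit-theoretically, $S$ maps the points of type $F_i$ to points of type $F_i$ with their positions shifted by $T$; concretely $N_{F_i}\circ S^k$ is again a $T$-point process and $N_{F_i}$ satisfies the two axioms (no mass on $\mu$-null sets, and intertwining with $S$). \emph{Freeness of $N_{F_i}$:} if $x$ is a leftmost point of a type-$F_i$ cluster, then $T^kx$ is a leftmost point of a type-$F_i$ cluster for no $k\neq 0$ — indeed the leftmost points of distinct clusters are separated by gaps determined by $F_i$, and a cluster has exactly one leftmost point, so $N_{F_i}$ and $N_{F_i}\circ S^k$ share no point for $k\in\ZZ^*$. \emph{Reconstruction:} every point $y\in N$ lies in a unique orbit-cluster; that cluster has a unique leftmost point $x$ of some type $F_i$, and then $y = T^kx$ for a unique $k\in F_i$, so $y$ is charged by exactly one term $N_{F_i}\circ S^k$. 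Hence $N=\sum_{i\in I}\sum_{k\in F_i}N_{F_i}\circ S^k$ pointwise a.s. \emph{Dissociation:} for $i\neq j$, a point $x$ of type $F_i$ cannot also be (a shift of) a point of type $F_j$ in the same orbit, because an orbit-cluster has a single well-defined type; more carefully, $N_{F_i}$ and $N_{F_j}\circ S^k$ charging a common point $z$ would force $z$'s cluster to have type $F_i$ (from the first) and, reading off the $k$-shift, also be compatible with type $F_j$, and one checks these are incompatible unless $F_i=F_j$. This uses aperiodicity of $T$ again to rule out coincidences along an orbit.

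I expect the main obstacle to be the careful bookkeeping in defining "the cluster of $x$" and "the leftmost point of a cluster" so that everything is genuinely measurable and the decomposition is honestly disjoint. The subtlety is that a single orbit may carry several disjoint clusters of possibly different types (the points $T^kx\in N$ need not form a single interval in $k$), so I must partition the charged points \emph{within each orbit} into maximal clusters before assigning types. The cleanest way is probably to define, for $x\in N$, its cluster as the connected component of $x$ in the graph on $N\cap O(x)$ where $T^ax$ and $T^bx$ are joined when $|a-b|$ is at most the diameter of... — actually this is circular, so instead I will define clusters bottom-up: declare $x\in N$ to be a cluster-start if $T^{-1}x\notin N, T^{-2}x\notin N,\dots$ down to the first integer where the orbit is "fresh", which again requires knowing the type; the honest fix is to fix in advance a countable exhausting family of possible types and for each candidate type $F$ let $N_F$ pick out exactly those $x$ with $\{k: T^kx\in N\}\cap[-\max F,\max F]$ equal to $F$ (shifted), checking that the resulting $N_F$ are disjoint, free, dissociated and sum to $N$ — the verification that distinct types give disjoint, dissociated processes is where aperiodicity and Proposition~\ref{prop:Bout d'orbite infini} (finiteness of orbit mass) do the real work. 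Once the definitions are pinned down correctly, all the remaining verifications are routine manipulations with the $T$-point process axioms and the countable structure of $\A^*$.
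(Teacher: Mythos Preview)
Your first two paragraphs give exactly the paper's argument: for each finite $F\subset\ZZ_+$ with $0\in F$, let $N_F$ consist of those $x\in N(\omega)$ for which $\{k\in\ZZ:T^kx\in N(\omega)\}$ equals $F$; then the $N_F$ are free, mutually dissociated, and $N=\sum_F\sum_{k\in F}N_F\circ S^k$. Your definition of ``type'' (the full set $\{k:T^kx\in N\}$ shifted so its minimum is $0$) together with ``leftmost point'' matches this precisely.

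The anticipated obstacle in your third paragraph is a phantom. There is no need to partition the points of $N$ on a given orbit into ``clusters'': the type $F$ is already the \emph{entire} finite set of occupied positions on the orbit, and nothing requires $F$ to be an interval. One orbit contributes exactly one type and exactly one leftmost point, full stop --- this is where the finite-orbit-mass hypothesis is used, and it is the only place. Your attempted fix, restricting attention to a window $[-\max F,\max F]$, would actually break the argument, since a point could then be compatible with several candidate types. Trust your original definition: measurability is immediate (each $N_F$ is cut out of $N$ by a countable conjunction of conditions $T^kx\in N$ or $T^kx\notin N$), and freeness, dissociation, and the reconstruction identity follow in one line each, just as you sketched in paragraph two.
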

\begin{proof}
For each non-empty subset $F\subset\mathbb{Z}_+$ that contains $0$,
we can form from $N$ a $T$-point process $N_{F}$ by keeping, for
all $\omega\in\Omega$, points $x\in N\left(\omega\right)$ such that
$T^{k}x\in N\left(\omega\right)$ for all $k\in F$ and $T^{k}x\notin N\left(\omega\right)$
for all $k\in\ZZ\setminus F$. By construction, $N_{F}$ is a free $T$-point
process, and $N_{F}$ and $N_{F^{\prime}}$ are dissociated whenever $F\neq F'$. Moreover, by hypothesis, 
\[
N=\sum_{F\subset\mathbb{Z}_+,\:0<\#F<\infty,\,0\in F}N_{F},\;\mathbb{P}\text{-a.s.}
\]
Removing all sets $F$ such that $N_{F}$ vanishes $\mathbb{P}\text{-a.s.}$,
we obtain the announced decomposition.
\end{proof}

\subsection{Detecting interactions within $T$-point processes}
\label{sec:interaction}

We have already introduced the moment measures of a point process $N$ by considering the quantities
\[
\mathbb{E}\left[N\left(A_{1}\right)\cdots N\left(A_{n}\right)\right]
\]
for sets $A_{1},\dots,A_{n}$ in ${\A}$.

We also obtain a $T^{\times n}$-invariant measure on $X^n$ by considering possibly different $T$-point processes $N_1,\ldots,N_n$ defined on the same probability space, and setting, for $A_1,\ldots,A_n$ in $\A$
\[
M^{N_1,\ldots,N_n}(A_1\times\cdots\times A_n)\egdef \mathbb{E}\left[N_{1}\left(A_{1}\right)\cdots N_{n}\left(A_{n}\right)\right].
\]
If the point processes have moments of all orders, this measure is boundedly finite and captures some valuable information about the interactions between those processes. To illustrate this, the next lemma roughly says that if this measure contains a non trivial ``diagonal'' part, then it reflects the presence of points on some common orbit for some of the point processes involved.

\begin{lemma}
\label{lemma:graph measures}Let $N_{1},\dots,N_{n}$ be $n$ $T$-point
processes defined on the ergodic system $\left(\Omega,\F ,\mathbb{P},S\right)$, having moments of all orders.
Assume there exist a real number $c>0$, integers $2\le j\le n$, $k_{2},\dots, k_{j}$, and a $T^{\times(n-j)}$-invariant, $\sigma$-finite measure $\nu\neq0$ such that,
for any sets $A_{1},\dots,A_{n}$ in ${\A}_{f}$,
\[
\mathbb{E}\left[N_{1}\left(A_{1}\right)\cdots N_{n}\left(A_{n}\right)\right] \ge
c\mu\left(A_{1}\cap T^{-k_{2}}A_{2}\cap\cdots\cap T^{-k_{j}}A_{j}\right)\nu\left(A_{j_{+1}}\times\cdots\times A_{n}\right).
\]
Then, for any $A\in{\A}_{f}$, 
\[
  \PP\left(A\cap N_1\cap T^{-k_2}N_2\cap\cdots\cap T^{-k_j} N_j\neq \emptyset\right) > 0.
\]
\end{lemma}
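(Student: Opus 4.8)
The plan is to argue by contradiction: suppose that for some $A\in\A_f$ we have
$\PP\left(A\cap N_1\cap T^{-k_2}N_2\cap\cdots\cap T^{-k_j}N_j\neq\emptyset\right)=0$, i.e. almost surely there is no point $x\in A$ with $x\in N_1$, $T^{k_2}x\in N_2$, \dots, $T^{k_j}x\in N_j$ simultaneously. I want to show this forces the left-hand side of the displayed inequality to fail for a well-chosen family of sets. The natural quantity to look at is the mixed moment measure $M\egdef M^{N_1,\dots,N_n}$ on $X^n$, which by the hypothesis of moments of all orders is boundedly finite and $T^{\times n}$-invariant, and its behaviour on the ``partial diagonal'' $D_{k_2,\dots,k_j}\egdef\{(x_1,\dots,x_n): x_2=T^{k_2}x_1,\dots,x_j=T^{k_j}x_1\}$. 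The assumed inequality says precisely that, restricted to bounded sets, $M$ dominates a nonzero measure carried by $D_{k_2,\dots,k_j}$ (namely the pushforward of $c\,\mu\otimes\nu$ under $x_1\mapsto(x_1,T^{k_2}x_1,\dots,T^{k_j}x_1)$, times $\nu$ in the last $n-j$ coordinates). The vanishing-probability assumption, on the other hand, should say that $N_1\times N_2\times\cdots\times N_n$, as a point process on $X^n$, almost surely puts no mass on $D_{k_2,\dots,k_j}\cap(A\times X^{n-1})$.

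First I would make the probabilistic statement precise. For disjoint bounded sets refining a common partition, $\EE\left[N_1(A_1)\cdots N_n(A_n)\right]$ counts the expected number of $n$-tuples $(x_1,\dots,x_n)$ with $x_i\in N_i\cap A_i$. The contribution of such tuples lying on the partial diagonal $D_{k_2,\dots,k_j}$ is exactly $\EE[\#\{x\in N_1\cap A_1: T^{k_i}x\in N_i\cap A_i,\ 2\le i\le j;\ \exists\text{ points of }N_{j+1},\dots,N_n\text{ in }A_{j+1},\dots,A_n\}]$; if the event $N_1\cap T^{-k_2}N_2\cap\cdots\cap T^{-k_j}N_j\cap A\neq\emptyset$ has probability zero, then for any $A_1\subset A$ this diagonal contribution vanishes. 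So I would choose all the $A_i$ for $i\le j$ inside a small tube around the diagonal: fix a fine finite measurable partition $\mathcal{Q}$ of a large bounded region of $X$, take $A_1$ to be a cell $Q$, and $A_i=T^{k_i}Q$ for $2\le i\le j$. Then $\mu(A_1\cap T^{-k_2}A_2\cap\cdots\cap T^{-k_j}A_j)=\mu(Q)$, while $\EE\left[N_1(Q)N_2(T^{k_2}Q)\cdots N_j(T^{k_j}Q)N_{j+1}(A_{j+1})\cdots N_n(A_n)\right]$ should be, up to an error controlled by the ``off-diagonal'' part, dominated by the event that the tuple is diagonal — which has been assumed negligible over $Q\subset A$.

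The technical heart is controlling that off-diagonal error, and this is the step I expect to be the main obstacle. When we evaluate $N_1(Q)N_2(T^{k_2}Q)\cdots$, besides the genuinely diagonal $n$-tuples we also pick up tuples where, say, $x_1\in N_1\cap Q$ and $x_2\in N_2\cap T^{k_2}Q$ but $x_2\neq T^{k_2}x_1$ — i.e. two distinct nearby points, one in each process. To kill this I would refine the partition $\mathcal{Q}$ and use the absolute continuity of the moment measures together with the fact that $\mu$ is continuous (non-atomic): the expected number of such ``two distinct points in a tiny cell and its translate'' pairs, summed over a fine partition of a fixed bounded region, tends to $0$ as the mesh $\to 0$, because it is bounded by the mass that the relevant two-point moment measure $M^{N_i,N_{i'}}$ assigns to a shrinking neighbourhood of an off-diagonal translate of the diagonal, and $M_2$-type measures are boundedly finite hence give mass $0$ to sets of $\mu^{\otimes 2}$-measure zero in the limit. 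Combining: for a suitable fine partition, summing the assumed inequality over cells $Q$ of a partition of $A$ (times any fixed bounded $A_{j+1}\times\cdots\times A_n$ with $\nu(A_{j+1}\times\cdots\times A_n)>0$, which exists since $\nu\neq 0$ is $\sigma$-finite), the right-hand side is $c\,\mu(A)\,\nu(A_{j+1}\times\cdots\times A_n)>0$, whereas the left-hand side is the sum over $Q$ of diagonal-plus-off-diagonal contributions, the diagonal part being $0$ by assumption and the off-diagonal part being made arbitrarily small by refinement — a contradiction. This proves the lemma.
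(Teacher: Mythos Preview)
Your overall strategy---refine a partition of $A$, plug cells $Q$ (and their $T^{k_i}$-translates) into the assumed inequality, and pass to the limit---is the right one and matches the paper's argument. In fact your version is slightly \emph{more} economical than the paper's: the paper first uses the ergodic theorem (averaging $S^k$ over the last $n-j$ factors) to decouple $N_1,\dots,N_j$ from $N_{j+1},\dots,N_n$ and only then refines the partition; you keep the extra factors $N_{j+1}(A_{j+1})\cdots N_n(A_n)$ throughout. That shortcut is legitimate, since the whole sum is dominated by $N_1(A)N_2(T^{k_2}A)\cdots N_j(T^{k_j}A)\,N_{j+1}(A_{j+1})\cdots N_n(A_n)$, which is integrable by the moments-of-all-orders hypothesis, and the existence of $A_{j+1},\dots,A_n\in\A_f$ with $\nu(A_{j+1}\times\cdots\times A_n)>0$ follows by exhausting $X$ by sets in $\A_f$.

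The gap is in your justification of the ``off-diagonal control''. You write that the off-diagonal error vanishes because ``$M_2$-type measures are boundedly finite hence give mass $0$ to sets of $\mu^{\otimes 2}$-measure zero in the limit''. This is false: moment measures need not be absolutely continuous with respect to $\mu^{\otimes 2}$; already for a Poisson process $M_2^N(A\times B)=\mu(A\cap B)+\mu(A)\mu(B)$ has a singular diagonal part. What actually makes the argument work is much simpler. The sum $\sum_Q N_1(Q)N_2(T^{k_2}Q)\cdots N_j(T^{k_j}Q)$ converges \emph{almost surely} to $\#\{x\in N_1\cap A:\,T^{k_i}x\in N_i,\ 2\le i\le j\}$ as the partition refines (distinct points eventually separate), and it is dominated by the integrable $N_1(A)\cdots N_j(T^{k_j}A)$; so dominated convergence passes the inequality to the limit in expectation. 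There is no need to split into diagonal and off-diagonal pieces and control them separately. Once you replace your absolute-continuity claim by this a.s.\ convergence + DCT step, the proof goes through.
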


\begin{proof}
We can apply the ergodic theorem:
\begin{multline*}
 \frac{1}{N}\sum_{k=1}^{N}\mathbb{E}\left[N_{1}\left(A_{1}\right)\cdots N_{j}\left(A_{j}\right)\bigl(N_{j+1}\left(A_{j+1}\right)\cdots N_{n}\left(A_{n}\right)\bigr)\circ S^{k}\right]\\
 \tend{N}{\infty}\mathbb{E}\left[N_{1}\left(A_{1}\right)\cdots N_{j}\left(A_{j}\right)\right]\,\mathbb{E}\left[N_{j+1}\left(A_{j+1}\right)\cdots N_{n}\left(A_{n}\right)\right].
\end{multline*}
But 
\begin{multline*}
  \mathbb{E}\left[N_{1}\left(A_{1}\right)\cdots N_{j}\left(A_{j}\right)\bigl(N_{j+1}\left(A_{j+1}\right)\cdots N_{n}\left(A_{n}\right)\bigr)\circ S^{k}\right]\\
  = \mathbb{E}\left[N_{1}\left(A_{1}\right)\cdots N_{j}\left(A_{j}\right)N_{j+1}\left(T^{-k}A_{j+1}\right)\cdots N_{n}\left(T^{-k}A_{n}\right)\right],
\end{multline*}
therefore, as $\nu$ is $T^{\times(n-j)}$-invariant,
\begin{multline*}
 \mathbb{E}\left[N_{1}\left(A_{1}\right)\cdots N_{j}\left(A_{j}\right)\right]\mathbb{E}\left[N_{j+1}\left(A_{j+1}\right)\cdots N_{n}\left(A_{n}\right)\right]\\
  \ge  c\mu\left(A_{1}\cap T^{-k_{2}}A_{2}\cap\cdots\cap T^{-k_j}A_{j}\right)\nu\left(A_{j_{+1}}\times\cdots\times A_{n}\right).
\end{multline*}
We claim that there exists some set $B\in{\A}_{f}$ so that $\mathbb{E}\left[N_{j+1}\left(B\right)\cdots N_{n}\left(B\right)\right]>0$ and $\nu\left(B\times\cdots\times B\right)>0$.
Indeed, we first observe that, by ergodicity, we have $N_k(X)=\infty$ a.s. for all $k$. Thus, $N_{j+1}\left(X\right)\cdots N_{n}\left(X\right)=\infty$ a.s.
Take some increasing sequence ${(B_\ell)}_{\ell\ge1}$ in $\A_f$ such that $X= \bigcup_{\ell\ge1}B_\ell$.
Then 
\[
  \mathbb{E}\left[N_{j+1}\left(B_\ell\right)\cdots N_{n}\left(B_\ell\right)\right] \tend{\ell}{\infty} \infty,
\]
thus $\mathbb{E}\left[N_{j+1}\left(B_\ell\right)\cdots N_{n}\left(B_\ell\right)\right]>0$ for all large enough $\ell$. 
By the same argument, we also have $\nu(B_\ell\times\cdots B_\ell)>0$ for all large enough $\ell$, and we can take $B=B_\ell$ for some large $\ell$.

We now set 
\[\alpha:=\frac{\nu\left(B\times\cdots\times B\right)}{\mathbb{E}\left[N_{j+1}\left(B\right)\cdots N_{n}\left(B\right)\right]}>0.
  \]
Then, we have for any $A_{1},\dots,A_{j}$ in ${\A}_{f}$,
\begin{equation}
  \label{eq:alpha}
  \mathbb{E}\left[N_{1}\left(A_{1}\right)\cdots N_{j}\left(A_{j}\right)\right]\ge c\alpha\mu\left(A_{1}\cap T^{-k_{2}}A_{2}\cap\cdots\cap T^{-j}A_{j}\right).
\end{equation}
Let us consider now  a generating sequence $\left(\left(A_{_{i}}^{n}\right)_{1\le i\le p_n}\right)_{n\ge1}$
of partitions of $A$: this means that this sequence of partitions of $A$ is increasing, and that for any $x\neq y$ in $A$, there exists $n(x,y)$ such that, for any $n\ge n(x,y)$, $x$ and $y$ do not
belong to the same atom of the partition $\left(A_{_{i}}^{n}\right)_{1\le i\le p_n}$.
Observe that
\begin{multline*}
 \sum_{i=1}^{p_n}N_{1}\left(A_{_{i}}^{n}\right)N_{2}\left(T^{k_{2}}A_{_{i}}^{n}\right)\cdots N_{j}\left(T^{k_{j}}A_{_{i}}^{n}\right)\\
 \tend{n}{\infty}\#\left\{ x\in N_{1}\cap A:\, T^{k_{2}}x\in N_{2},\dots,T^{k_{j}}x\in N_{j}\right\}.
\end{multline*}
Moreover,
\begin{align*}
 &\sum_{i=1}^{p_n}N_{1}\left(A_{_{i}}^{n}\right)N_{2}\left(T^{k_{2}}A_{_{i}}^{n}\right)\cdots N_{j}\left(T^{k_{j}}A_{_{i}}^{n}\right)\\
 & \le  N_{1}\left(A\right)N_{2}\left(T^{k_{2}}A\right)\cdots N_{j-1}\left(T^{k_{j-1}}A\right)\sum_{i=1}^{p_n}N_{j}\left(T^{k_{j}}A_{_{i}}^{n}\right)\\
 & =  N_{1}\left(A\right)N_{2}\left(T^{k_{2}}A\right)\cdots N_{j-1}\left(T^{k_{j-1}}A\right)N_{j}\left(T^{k_{j}}A\right),
\end{align*}
which is integrable. So we can apply the dominated convergence theorem to get
\begin{multline*}
 \mathbb{E}\left[\sum_{i=1}^{p_n}N_{1}\left(A_{_{i}}^{n}\right)N_{2}\left(T^{k_{2}}A_{_{i}}^{n}\right)\cdots N_{j}\left(T^{k_{j}}A_{_{i}}^{n}\right)\right]\\
 \tend{n}{\infty} \mathbb{E}\left[\#\left\{ x\in N_{1}\cap A,\, T^{k_{2}}x\in N_{2},\dots,T^{k_{j}}x\in N_{j}\right\} \right].
\end{multline*}
On the other hand, we have
\begin{align*} &\mathbb{E}\left[\sum_{i=1}^{p_n}N_{1}\left(A_{_{i}}^{n}\right)N_{2}\left(T^{k_{2}}A_{_{i}}^{n}\right)\cdots N_{j}\left(T^{k_{j}}A_{_{i}}^{n}\right)\right]\\
 & = \sum_{i=1}^{p_n}\mathbb{E}\left[N_{1}\left(A_{_{i}}^{n}\right) N_{2}\left(T^{k_{2}}A_{_{i}}^{n}\right)\cdots N_{j}\left(T^{k_{j}}A_{_{i}}^{n}\right)\right]\\
 & \ge  \sum_{i=1}^{p_n}c\alpha\mu\left(A_{_{i}}^{n}\cap A_{_{i}}^{n}\cap\cdots\cap A_{_{i}}^{n}\right) \quad\text{by~\eqref{eq:alpha}}\\
 & =  c\alpha\sum_{i=1}^{p_n}\mu\left(A_{_{i}}^{n}\right)\\
 & =  c\alpha\mu\left(A\right)  
\end{align*}
Letting $n$ go to $\infty$, we get
\[
  \mathbb{E}\left[\#\left\{ x\in N_{1}\cap A,\, T^{k_{2}}x\in N_{2},\dots,T^{k_{j}}x\in N_{j}\right\} \right]\ge c\alpha\mu\left(A\right),
\]
which concludes the proof.
\end{proof}

In the case where $N_1=N_2=N$, the following proposition shows that some particular form for $M_2^N$ forces the $T$-point process
to be free. 
\begin{prop}
\label{prop:PoissonFree}
Let $N$ be a square integrable $T$-point process, whose second order
moment measure satisfies 
\[M_{2}^{N}\left(A_{1}\times A_{2}\right)=\mbox{\ensuremath{\mu}}\left(A_{1}\cap A_{2}\right)+\mu\left(A_{1}\right)\mbox{\ensuremath{\mu}}\left(A_{2}\right).\]
Then $N$ is a free $T$-point process. In particular the Poisson
process associated to the Poisson suspension $\left(X^{*},{\A}^{*},\mu^{*},T_{*}\right)$
is a free $T$-point process.
\end{prop}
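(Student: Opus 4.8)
The plan is to deduce freeness of $N$ from Lemma~\ref{lemma:graph measures} by a contrapositive argument. Suppose $N$ is not free; then there exists $k\in\ZZ^*$ such that, with positive probability, $N$ and $N\circ S^k$ share a common point, i.e. $\PP(N\cap T^{-k}N\neq\emptyset)>0$. Since Lemma~\ref{lemma:graph measures} is stated as an implication, I first need its contrapositive in a usable form: if $N$ is \emph{not} free, then for some $k\neq0$ there is a constant $c>0$ such that $M_2^N(A_1\times A_2)\geq c\,\mu(A_1\cap T^{-k}A_2)$ for all $A_1,A_2\in\A_f$. This would follow from a version of the computation in the proof of Lemma~\ref{lemma:graph measures} run in reverse — one uses the generating-partition approximation to write $\EE[\#\{x\in N_1\cap A:\ T^kx\in N_2\}]$ as a limit of sums $\sum_i M_2^N(A_i^n\times T^kA_i^n)$ and compares with $\mu(A\cap A)$ — but it is cleaner simply to observe directly what the \emph{diagonal part} of $M_2^N$ must be, which is what the hypothesis hands us.

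Here is the direct route I would actually take. The hypothesis says $M_2^N(A_1\times A_2)=\mu(A_1\cap A_2)+\mu(A_1)\mu(A_2)$. I want to show that for every $k\neq0$, $\PP(x\in N\text{ and }T^{-k}x\in N\text{ for some }x)=0$. Fix $k\neq0$. Consider, for a generating sequence of partitions $(A_i^n)_i$ of a set $A\in\A_f$, the sum $S_n:=\sum_i N(A_i^n)N(T^{-k}A_i^n)$. As in the proof of Lemma~\ref{lemma:graph measures}, $S_n$ is dominated by $N(A)N(T^{-k}A)$, which is integrable since $N$ is square integrable, and $S_n\to\#\{x\in N\cap A:\ T^{-k}x\in N\}$ pointwise. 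By dominated convergence, $\EE[\#\{x\in N\cap A:\ T^{-k}x\in N\}]=\lim_n\sum_i M_2^N(A_i^n\times T^{-k}A_i^n)=\lim_n\sum_i\bigl(\mu(A_i^n\cap T^{-k}A_i^n)+\mu(A_i^n)\mu(T^{-k}A_i^n)\bigr)$. The second term is bounded by $\mu(A)\max_i\mu(T^{-k}A_i^n)$, which tends to $0$ as the partitions refine (each atom's measure goes to $0$ since $\mu$ is continuous), so only the diagonal term survives; and since $k\neq0$ and $T$ is aperiodic, $A_i^n\cap T^{-k}A_i^n$ shrinks to $\{x\in A:\ T^{-k}x=x\}$, a $\mu$-null set. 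Hence $\EE[\#\{x\in N\cap A:\ T^{-k}x\in N\}]=0$, so $\PP$-a.s. there is no $x\in N\cap A$ with $T^{-k}x\in N$; letting $A$ increase to $X$ gives the conclusion for this $k$, and intersecting over all $k\in\ZZ^*$ shows $N$ is free.

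The main obstacle is the claim that $\sum_i\mu(A_i^n\cap T^{-k}A_i^n)\to 0$ and $\max_i\mu(A_i^n)\to 0$ along a generating sequence of partitions. For the first: because the partitions are generating, for $x\neq y$ eventually $x,y$ lie in distinct atoms, so $\bigcap_n\bigcup_i(A_i^n\times A_i^n)$ is contained in the diagonal of $A\times A$; applying this to the pushforward of $\mu$ under $x\mapsto(T^{-k}x,x)$ (or directly: $\limsup_n \bigcup_i(A_i^n\cap T^{-k}A_i^n)\subset\{x:\ T^{-k}x=x\}$ up to choosing atoms consistently), and using that $T$ has no periodic points of period $k\neq0$ since $(X,\A,\mu,T)$ is aperiodic (indeed conservative ergodic with $\mu$ infinite, hence $\mu$ continuous and $T$ aperiodic), the limit is $0$. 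For the second, one needs the generating sequence to be chosen with vanishing atom sizes, which is automatic up to refining. Once these two measure-theoretic facts are in hand, the rest is the dominated-convergence bookkeeping already performed in Lemma~\ref{lemma:graph measures}. Finally, the ``in particular'' assertion is immediate: a Poisson point process of intensity $\mu$ has exactly $M_2^{N}(A_1\times A_2)=\mu(A_1\cap A_2)+\mu(A_1)\mu(A_2)$ (covariance of $N(A_1),N(A_2)$ equals $\mu(A_1\cap A_2)$ plus the product of means), so the hypothesis applies and the Poisson process attached to $(X^*,\A^*,\mu^*,T_*)$ is $T$-free.
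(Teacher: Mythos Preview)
Your proof is correct and follows essentially the same approach as the paper: approximate $\#\{x\in N\cap A:\ T^{k}x\in N\}$ by sums $\sum_i N(A_i^n)N(T^{k}A_i^n)$ over a refining sequence of partitions, pass to the limit via dominated convergence, and plug in the explicit formula for $M_2^N$.

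The only notable difference is a simplification the paper makes that you do not: instead of taking an arbitrary $A\in\A_f$, the paper chooses $A$ with $A\cap T^{k}A=\emptyset$ (which exists by aperiodicity) and takes a partition with equal-measure atoms $\mu(A_i^n)=\mu(A)/p_n$. Then $\mu(A_i^n\cap T^{k}A_i^n)=0$ outright, and the product term is exactly $\mu(A)^2/p_n\to 0$, so no separate limit argument for either piece is needed. Your route works too, but requires the two measure-theoretic facts you flag (vanishing atom sizes and $\bigcap_n\bigcup_i(A_i^n\cap T^{-k}A_i^n)\subset\{x:T^{-k}x=x\}$); the paper's choice of $A$ bypasses both. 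The paper then covers $X$ using that $A$ is a sweep-out set, whereas you let $A$ increase along a sequence in $\A_f$; both conclusions are equally valid.

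Your first paragraph, the attempted contrapositive via Lemma~\ref{lemma:graph measures}, is unnecessary and you rightly abandon it.
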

\begin{proof}
Fix $k\neq0$. Take a set $A\in{\A}_{f}$
such that $A\cap T^{k}A=\emptyset$ (such a set always exists) and
$\left(\left(A_{_{i}}^{n}\right)_{1\le i\le p_n}\right)_{n\ge1}$ a generating sequence
of partitions of $A$, satisfying $\mu\left(A_{i}^{n}\right)=\frac{\mu\left(A\right)}{p_n}$ for all $1\le i\le p_n$.
Then
\[
\mathbb{E}\left[\sum_{i=1}^{p_n}N\left(A_{_{i}}^{n}\right)N\left(T^{k}A_{_{i}}^{n}\right)\right]\tend{n}{\infty}\mathbb{E}\left[\#\left\{ x\in N\cap A:\, T^{k}x\in N\right\} \right].
\]
But, as $A\cap T^{k}A=\emptyset$,
\begin{align*}
\mathbb{E}\left[\sum_{i=1}^{p_n}N\left(A_{_{i}}^{n}\right)N\left(T^{k}A_{_{i}}^{n}\right)\right] 
& = \sum_{i=1}^{p_n}M_2^N(A_i^n\times T^k A_i^n)\\
& = \sum_{i=1}^{p_n}\mu(A_i^n)^2=\frac{\mu\left(A\right)^2}{p_n}\tend{n}{\infty}0.
\end{align*}
Therefore $\#\left\{ x\in N\cap A:\, T^{k}x\in N\right\} =0$
a.s. But this is also true if we replace $A$ by $T^nA$ for any $n\in\ZZ$. 
As $T$ is conservative ergodic, the
set $A$ is a sweep out set, which means that $\cup_{n\in\mathbb{Z}}T^{n}A=X$
a.e. and we get $\#\left\{ x\in N,\, T^{k}x\in N\right\} =0$ a.s.
\end{proof}

\section{Getting SuShis}
\label{sec:getting_sushis}
For each $n\ge1$, let us denote by $\P_n$ the set of all partitions of $\{1,\ldots,n\}$. Given $\pi\in \P_n$, and a family $\kappa=(k_i)_{1\le i\le n}$ of integers, we can define a measure
$m_\pi^\kappa$ on $X^n$, by setting
\[
  m_\pi^\kappa (A_{1}\times\cdots\times A_{n})\egdef \prod_{P\in\pi} \mu\left( \bigcap_{i\in P} T^{-k_i}A_i \right).
\]
When $\kappa=(0,\ldots,0)$, we simply note $m_\pi$ instead of $m_\pi^{(0,\ldots,0)}$. 
When $\pi$ is the partition into points, $m_\pi^\kappa$ is the product measure $\mu^{\otimes n}$. When $\pi$ is the trivial partition with a single atom, $m_\pi$ corresponds to the $n$-diagonal measure, concentrated on $\Delta_n\egdef\{(x,\ldots,x)\in X^n:x\in X\}$.

Since $\mu$ is $T$-invariant, the measure $m_\pi^\kappa$ is $T^{\times n}$-invariant. Moreover, we can always, without changing the measure  $m_\pi^\kappa$, shift the subfamilies $(k_i)_{i\in P}$ so that 
$k_i=0$ whenever $i$ is the smallest element of the atom $P$ of $\pi$: we say in this case that $\kappa$ is \emph{$\pi$-compatible}.

The action of $T^{\times n}$ on the measure $m_\pi^\kappa$ is isomorphic to 
\[\left(X^{\#\pi},{\A}^{\otimes\#\pi},\mu^{\otimes\#\pi},T^{\times\#\pi}\right)].\]

From now on, we assume that $T$ satisfies the following properties:
\begin{equation}
  \label{P1}\tag{P1}\parbox[t]{11cm}{For each $n\ge1$, the product system $(X^n,\A^{\otimes n}, \mu^{\otimes n}, T^{\times n})$ is ergodic;}
\end{equation}
\begin{equation}
  \label{P2}\tag{P2}\parbox[t]{11cm}{For each $n\ge1$, if $\sigma$ is a boundedly finite, $T^{\times n}$-invariant measure on $X^n$, 
  whose marginals are absolutely continuous with respect to $\mu$,
  then  $\sigma$ is conservative, and its ergodic components are all of the form $m_\pi^\kappa$ for some $\pi\in\P_n$ and some $\pi$-compatible family $\kappa$. }
\end{equation}
An example of a transformation $T$ satisfying both properties is given by the so-called
\emph{nearly finite Chacon transformation}~\cite{nfc}.

\subsection{Free implies Poisson}


It is well known that a Poisson distribution is completely determined by its moments, which follows from the fact that its moment generating function is 
analytic in a neighborhood of 0 (see \textit{e.g.} \cite[p.~86]{DasGupta}). The following lemma is a kind of generalization
of this result to the distribution of Poisson point processes, which are completely determined by their moment measures.

\begin{lemma}
  \label{lemma:poisson_moment_measures}
  Let $\mathcal{N}$ be a Poisson point process on $X$ of intensity $\mu$, and assume that $N$ is a point process on $X$ with the same moment measures as $\mathcal{N}$.
  Then $N$ is also a Poisson point process of intensity $\mu$.
\end{lemma}

\begin{proof}

For any $A\in {\A}_{f}$, $N\left(A\right)$ is a Poisson random variable of intensity $\mu\left(A\right)$ and as such, its distribution is completely determined by its moments.
Since, by hypothesis, $\mathcal{N}\left(A\right)$ has the same moments, it is also a Poisson random variable of parameter $\mu\left(A\right)$. We conclude by applying R\'enyi's characterization theorem~\cite{Renyi1967} which, in particular, identifies as a Poisson process on $X$ of intensity $\mu$, any point process such that, for any $A\in {\A}_{f}$, the random measure of $A$ is Poisson distributed with parameter $\mu\left(A\right)$. (R\'enyi's orginal statement was restricted to Poisson processes on the real line with a non atomic Radon measure but it can of course be translated on any metric space with a continuous Borel measure.)

%
%
\end{proof}

\begin{theo}
\label{theo:Free}Assume that properties~\eqref{P1} and~\eqref{P2} hold for $T$. If $N$ is a free $T$-point process with moments
of all orders defined on an ergodic system $\left(\Omega,\F ,\mathbb{P},S\right)$,
then it is Poisson.
\end{theo}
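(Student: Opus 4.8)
The plan is to show that the moment measures $M_k^N$ of a free $T$-point process with moments of all orders are forced, by the rigidity properties~\eqref{P1} and~\eqref{P2}, to coincide with those of a Poisson process of intensity $\alpha\mu$ (where $\alpha>0$ is the intensity constant), and then to invoke the fact that a simple point process with moments of all orders is determined by its moment measures (its Laplace functional being reconstructed from the factorial moment measures, which have at most exponential growth here). So the real content is: compute the possible form of each $M_k^N$.

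First I would fix $k$ and look at $M_k^N$ as a boundedly finite, $T^{\times k}$-invariant measure on $X^k$ whose one-dimensional marginals are $\alpha\mu$, hence absolutely continuous with respect to $\mu$; by~\eqref{P2} it is conservative and each of its ergodic components is of the form $m_\pi^\kappa$ for some partition $\pi\in\P_k$ and some $\pi$-compatible family $\kappa=(k_i)$. Thus $M_k^N$ is a (countable, positive) combination $\sum c_{\pi,\kappa}\, m_\pi^\kappa$. The freeness hypothesis is exactly what kills every term with $\kappa\neq(0,\dots,0)$: if some component $m_\pi^\kappa$ appeared with a nonzero coefficient and with $k_i\neq k_j$ for two indices $i,j$ in the same atom $P$ of $\pi$, then, summing over the remaining variables (set $A_\ell=X$ for $\ell\notin P$ and using boundedly-finite exhaustion as in Lemma~\ref{lemma:graph measures}), the moment inequality
\[
  \EE\bigl[N(A_i)N(A_j)\textstyle\prod_{\ell}N(A_\ell)\bigr]\ \ge\ c\,\mu\bigl(A_i\cap T^{-(k_j-k_i)}A_j\bigr)\,\nu(\cdots)
\]
holds for a suitable nonzero $T^{\times}$-invariant $\nu$, and Lemma~\ref{lemma:graph measures} then yields, with positive probability, a point $x\in N$ with $T^{k_j-k_i}x\in N$ and $k_j-k_i\neq 0$ — contradicting freeness. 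Hence in every surviving component $\kappa$ is constant on each atom of $\pi$, i.e. $m_\pi^\kappa=m_\pi$ up to the global shift; since $M_k^N$ is translation-invariant anyway, only the measures $m_\pi$, $\pi\in\P_k$, occur. So $M_k^N=\sum_{\pi\in\P_k} c_\pi\, m_\pi$ for nonnegative constants $c_\pi$.

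Next I would pin down the constants $c_\pi$. Testing on disjoint sets $A_1,\dots,A_k\in\A_f$ makes all $m_\pi$ with a nonsingleton atom vanish, leaving $c_{\hat 0}=\alpha^k$ (where $\hat0$ is the partition into singletons), from the product structure of expectations on disjoint sets combined with the $k=1$ normalization and an induction; more carefully, I would argue by induction on $k$, using that on disjoint sets $N(A_1),\dots,N(A_k)$ need not be independent a priori, so instead I compute the factorial moment measures $M_{[k]}^N(A_1\times\cdots\times A_k):=\EE[N(A_1)\cdots N(A_k)]$ for pairwise disjoint $A_i$ and show by the same~\eqref{P2}-plus-freeness analysis that this (boundedly finite, $T^{\times k}$-invariant, $\mu$-a.c.-marginals) measure is a multiple of $\mu^{\otimes k}$, the multiple being forced to $\alpha^k$ by taking a further induction / a diagonal-sequence argument as in Proposition~\ref{prop:Bout d'orbite infini}. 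Once the factorial moment measures are $\alpha^k\mu^{\otimes k}$ for all $k$ — equivalently $M_k^N=\sum_{\pi\in\P_k}\alpha^{\#\pi} m_\pi$, the moment formula of a Poisson process of intensity $\alpha\mu$ — the point process is Poisson: its restriction to any $A\in\A_f$ has all factorial moments equal to those of a Poisson$(\alpha\mu(A))$ variable and these determine the law (Carleman), and joint laws on disjoint sets follow likewise, so $N\stackrel{d}{=}$ Poisson of intensity $\alpha\mu$, and being a $T$-point process it realizes the Poisson suspension.

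The main obstacle I anticipate is the bookkeeping in the second step: passing cleanly from "each ergodic component of $M_k^N$ is some $m_\pi^\kappa$" to "the only components are the $m_\pi$ with $\pi$-compatible $\kappa=0$, with the correct coefficients", because~\eqref{P2} describes ergodic components of the \emph{measure} and one must rule out not just "bad" $\kappa$ but also, on disjoint test sets, any contribution from partitions $\pi\neq\hat 0$ — this is where freeness must be leveraged through Lemma~\ref{lemma:graph measures} at every level $k$ simultaneously, and where one needs the boundedly-finite exhaustion trick to make the hypothesis of that lemma (an inequality with a genuinely nonzero $\nu$) applicable. The determinacy-of-law step (factorial moments determine a simple point process when they don't grow too fast) is standard and I would cite~\cite{DaleyVereJonesI}.
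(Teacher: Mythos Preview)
Your overall architecture is the paper's: use~\eqref{P2} to write $M_k^N=\sum c_{\pi,\kappa}\,m_\pi^\kappa$, kill the $\kappa\neq 0$ terms via freeness and Lemma~\ref{lemma:graph measures}, then identify the remaining coefficients with the Poisson ones, and finish by moment determinacy. The first and last steps are fine and match the paper.

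The gap is in the middle: you do not actually determine the coefficients $c_\pi$. Testing on disjoint $A_1,\dots,A_k$ only tells you that $\EE[N(A_1)\cdots N(A_k)]=c_{\hat 0}\,\mu(A_1)\cdots\mu(A_k)$; it does \emph{not} give $c_{\hat 0}=\alpha^k$, because nothing so far says $N(A_1),\dots,N(A_k)$ are independent. You notice this and retreat to the factorial moment measure, and indeed freeness plus simplicity force $M_{[k]}^N=c_k\,\mu^{\otimes k}$ --- but the same obstruction remains: why is $c_k=\alpha^k$? Your appeal to ``a further induction / a diagonal-sequence argument as in Proposition~\ref{prop:Bout d'orbite infini}'' is not an argument; that proposition concerns square-integrability and orbit mass, and provides no relation between $c_k$ and $c_{k-1}$. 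A Cox process with random intensity $Z\mu$ has factorial moments $\EE[Z^k]\,\mu^{\otimes k}$, so the sequence $(c_k)$ is a priori any moment sequence, not necessarily $\alpha^k$; what singles out the constant case is the ergodicity of $(\Omega,\mathbb P,S)$, which you never invoke.

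The paper supplies exactly this missing piece. Having reduced to $M_{n+1}^N=\sum_{\pi\in\P_{n+1}}c_\pi m_\pi$, it applies the ergodic theorem to
\[
\frac{1}{\ell}\sum_{k=1}^\ell \EE\Bigl[\prod_{i\in K}N(A_i)\prod_{i\notin K}N(A_i)\circ S^k\Bigr]
\]
for each proper nonempty $K\subset\{1,\dots,n+1\}$, obtaining $M_{\#K}^N\otimes M_{n+1-\#K}^N$ on one side, and (after computing the effect of the shift on each $m_\pi$) the sum $\sum_{\pi:\,K\text{ is a union of atoms of }\pi}c_\pi m_\pi$ on the other. This identifies every $c_\pi$ in terms of lower-order coefficients except the one for the single-atom partition; that last coefficient is pinned down separately (it equals $1$, by a direct generating-partition computation valid for any process of intensity $\mu$). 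The induction then closes. You should replace your vague ``induction / diagonal-sequence'' sentence by this ergodic-averaging argument; without it, the proof does not go through.
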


\begin{proof}
Let $N$ be a free $T$-point process with moments of all orders. We
can assume that $\mu$ is the intensity of $N$. 
In the first step of the proof, we want to show that the moment measures of any order of $N$ coincide with those of a Poisson process of intensity $\mu$ (we recall that this
latter point process is itself a free, ergodic, $T$-point process).

The $n$-order moment measure $M_{n}^{N}$ satisfies the hypothesis of Property~\eqref{P2}, hence it has at most countably many ergodic components, of the form 
$m_\pi^\kappa$ for some $\pi\in\P_n$ and some $\pi$-compatible family $\kappa$.
By lemma~\ref{lemma:graph measures} applied with $N_{1}=\cdots=N_{n}=N$, which is a free $T$-point process, we see that the contribution of any $m_\pi^\kappa$, where $\kappa\neq(0,\ldots,0)$, vanishes.
Therefore, the ergodic decomposition of  $M_{n}^{N}$ writes
\[
M_{n}^N=\sum_{\pi\in\P_{n}}c_{\pi}m_{\pi}.
\]

We first point out that for each $n\ge1$, the weight of the $n$-diagonal
measure is~$1$ (this is valid for any point process of intensity
$\mu$). Indeed, using once again a set $A\in{\A}_{f}$, and
$\left(\left(A_{i}^{\ell}\right)_{1\le i\le p_\ell}\right)_{\ell\ge1}$ a generating sequence of
partitions of $A$, we get 
\begin{multline*}
  \sum_{i=1}^{p_\ell}M_{n}^N\left(A_i^{\ell}\times\cdots\times A_i^{\ell}\right) = \mathbb{E}\left[\sum_{i=1}^{p_\ell}N\left(A_i^{\ell}\right)\cdots N\left(A_i^{\ell}\right)\right]\\
  \tend{\ell}{\infty}  
  \mathbb{E}\left[N\left(A\right)\right]=\mu\left(A\right)=\mu\left(A\cap\cdots\cap A\right).
\end{multline*}
On the other hand,
\begin{multline*}
  \sum_{i=1}^{p_\ell}M_{n}^N\left(A_i^{\ell}\times\cdots\times A_i^{\ell}\right) =M_{n}^N\left(\bigsqcup_{i=1}^{p_\ell}A_i^{\ell}\times\cdots\times A_i^{\ell}\right)\\
  \tend{\ell}{\infty} M_{n}^N\left(\Delta_{n}\cap A\times\cdots\times A\right).
\end{multline*}
Therefore $M_{n}^N\left(\Delta_{n}\cap A\times\cdots\times A\right)=\mu\left(A\cap\cdots\cap A\right)$, 
which implies, as claimed,  that the weight of the $n$-diagonal measure is $1$.

We now want to prove by induction that, for all $n\ge1$, $M_{n}^N$
is the $n$-order moment measure of a Poisson process of intensity $\mu$.
The property is of course satisfied for $n=1$. Let us assume it is satisfied
up to some $n\ge1$, and let $A_{1},\dots,A_{n+1}$ be sets in ${\A}_{f}$.
Pick a nonempty subset $K\subsetneq\left\{ 1,\dots,n+1\right\}$.
By the ergodic theorem, we get
\begin{multline}
\label{eq:1}
 \frac{1}{\ell}\sum_{k=1}^{\ell}\mathbb{E}\left[\prod_{i\in K}N\left(A_{i}\right)\left(\prod_{i\in K^{c}}N\left(A_{i}\right)\circ S^{k}\right)\right]\\
 \tend{\ell}{\infty}  \mathbb{E}\left[\prod_{i\in K}N\left(A_{i}\right)\right]\mathbb{E}\left[\prod_{i\in K^{c}}N\left(A_{i}\right)\right]\\
  =  M^N_{\# K }\left(\prod_{i\in K}A_{i}\right)M^N_{(n+1-\# K)}\left(\prod_{i\in K^{c}}A_{i}\right).
\end{multline}
On the other hand,
\begin{align}
  \label{eq:2}& \frac{1}{\ell}\sum_{k=1}^{\ell}\mathbb{E}\left[\prod_{i\in K}N\left(A_{i}\right)\left(\prod_{i\in K^{c}}N\left(A_{i}\right)\circ S^{k}\right)\right]\\
  \nonumber = & \frac{1}{\ell}\sum_{k=1}^{\ell}M^N_{n+1}\left(T^{-\epsilon_{k}\left(1\right)}A_{1}\times\cdots\times T^{-\epsilon_{k}\left(n\right)}A_{n}\right)\\
  \nonumber = & \sum_{\pi\in\P_{n+1}}c_{\pi}\frac{1}{\ell}\sum_{k=1}^{\ell}m_{\pi}\left(T^{-\epsilon_{k}\left(1\right)}A_{1}\times\cdots\times T^{-\epsilon_{k}\left(n+1\right)}A_{n+1}\right)
\end{align}
where $\epsilon_{k}\left(i\right)\egdef k$ if $i\in K$, and $\epsilon_{k}\left(i\right)\egdef 0$ otherwise. Coming back to the definition of $m_\pi$, we write
\[
  m_\pi\left(T^{-\epsilon_{k}\left(1\right)}A_{1}\times\cdots\times T^{-\epsilon_{k}\left(n+1\right)}A_{n+1}\right)
  = \prod_{P\in\pi} \mu\left( \bigcap_{i\in P} T^{-\epsilon_k(i)} A_i\right).
\]
Observe that, if $K$ is a union of atoms of $\pi$, we have for any $1\le k\le \ell$
\[
  m_\pi \left(T^{-\epsilon_k\left(1\right)}A_{1}\times\cdots\times T^{-\epsilon_k\left(n+1\right)}A_{n+1}\right)=m_{\pi}\left(A_{1}\times\cdots\times A_{n+1}\right).
\]
Otherwise, there exists an atom $P\in\pi$ containing indices $i\in K$ and $j\notin K$, hence with $\epsilon_{k}\left(i\right)=k$ and $\epsilon_{k}(j)=0$. We get that for some constant $C$,
\[
  m_\pi\left(T^{-\epsilon_{k}\left(1\right)}A_{1}\times\cdots\times T^{-\epsilon_{k}\left(n+1\right)}A_{n+1}\right) \le C \mu(A_j\cap T^{-k}A_i).
\]
But, since $T$ is an ergodic infinite-measure-preserving map, 
\[
  \frac{1}{\ell}\sum_{k=1}^{\ell}\mu\left(A_j\cap T^{-k}A_i\right) \tend{\ell}{\infty}0.
\]
Defining $\P_{n+1}^{K}$ as the set of partitions
$\pi\in\P_{n+1}$ where $K$ is a union of atoms
of $\pi$, the above proves that the contribution of all partitions $\pi\in\P_{n+1}\setminus\P_{n+1}^{K}$ vanishes, and we get, using~\eqref{eq:1} and \eqref{eq:2},
\[
M^N_{\# K }\left(\prod_{i\in K}A_{i}\right)M^N_{(n+1-\# K)}\left(\prod_{i\in K^{c}}A_{i}\right)=\sum_{\pi\in\P_{n+1}^K} c_{\pi}m_{\pi}\left(A_{1}\times\cdots\times A_{n+1}\right).
\]
Since $\emptyset\neq K\subsetneq\left\{ 1,\dots,n+1\right\}$, the ergodic decompositions of $M^N_{\# K}$ and $M^N_{(n+1-\# K)}$ only involve the coefficients $c_\pi$, $\pi\in\P_1\cup\cdots\cup\P_n$. Identifying the ergodic decompositions on both sides of the above equality, we see  that all
the coefficients $c_{\pi}$, $\pi\in\P_{n+1}^{K}$ are
completely determined by coefficients corresponding to partitions in $\P_1\cup\cdots\cup\P_n$. Moreover, the above argument is valid
in particular when $N$ is the Poisson process  of intensity $\mu$ (which is free by Proposition~\ref{prop:PoissonFree}). By letting
$K$ run through all strict subsets of $\left\{ 1,\dots,n+1\right\}$, and using the induction hypothesis,
we identify all but one coefficients of the ergodic decomposition
of $M^N_{n+1}$ as those of the Poisson point process of intensity
$\mu$. The only coefficient that cannot be determined by this method
is the one associated to the trivial partition of $\{1,\ldots,n+1\}$ into a single atom. But this corresponds  to the $(n+1)$-diagonal measure, and we already
know that this coefficient is $1$.
Thus we have proved the moment measures of any order of $N$ are those of a Poisson point process of intensity $\mu$, and we conclude by Lemma~\ref{lemma:poisson_moment_measures}.
\end{proof}

\subsection{Dissociation implies independence}

\begin{prop}
\label{prop:dissociation}Assume that properties~\eqref{P1} and~\eqref{P2} hold for $T$. Let $N_{1},\dots,N_{k}$ be 
Poisson $T$-point processes, of respective intensity $\alpha_{1}\mu,\ldots,\alpha_{k}\mu$,
defined on the same ergodic system  $\left(\Omega,\F ,\mathbb{P},S\right)$. 
If these processes are mutually dissociated, then they are independent.
\end{prop}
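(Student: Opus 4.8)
The plan is to reduce the statement to an application of Theorem~\ref{theo:Free} for a suitably chosen $T$-point process built on $X\times\{1,\dots,k\}$, or more directly to show that the joint law of $(N_1,\dots,N_k)$ has the same moment measures as $k$ \emph{independent} Poisson processes and then invoke the analytic (Laplace transform) argument from the end of the proof of Theorem~\ref{theo:Free}. Concretely, form the superposition $N\egdef \sum_{\ell=1}^k N_\ell^{(\ell)}$ living on the disjoint union $\bar X\egdef X\times\{1,\dots,k\}$, where $N_\ell^{(\ell)}$ is $N_\ell$ pushed onto the $\ell$-th copy of $X$; equip $\bar X$ with the obvious transformation $\bar T\egdef T\times\Id$ and the measure $\bar\mu\egdef\sum_\ell \alpha_\ell(\mu\otimes\delta_\ell)$. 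One checks that $\bar T$ still satisfies~\eqref{P1} and~\eqref{P2} (the $\bar T^{\times n}$-invariant measures on $\bar X^n$ decompose over the finitely many ways of distributing the coordinates among the $k$ copies, and on each piece one is back to a product of copies of $(X,\mu,T)$, so Property~\eqref{P2} is inherited). The dissociation hypothesis says exactly that $N$ is a \emph{free} $\bar T$-point process: two points of $N$ on copies $\ell\neq\ell'$ can never be on the same $\bar T$-orbit since distinct copies are $\bar T$-invariant, and within a single copy $N_\ell$ is free because a Poisson process is free by Proposition~\ref{prop:PoissonFree}.

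First I would verify that $N$ has moments of all orders: this is immediate since each $N_\ell(A)$ is Poisson-distributed, hence has all moments, and $N(\bar A)$ for $\bar A\in\bar\A_f$ is a finite sum of such variables. Next, I would invoke Theorem~\ref{theo:Free} applied to $\bar T$ and $N$ to conclude that $N$ is a Poisson point process on $\bar X$ of intensity $\bar\mu$. By the independence-over-disjoint-sets property in Definition~\ref{def:Poisson}, the restrictions of $N$ to the $k$ mutually disjoint sets $X\times\{1\},\dots,X\times\{k\}$ are independent; but these restrictions are precisely $N_1^{(1)},\dots,N_k^{(k)}$, which are isomorphic copies of $N_1,\dots,N_k$. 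Hence $N_1,\dots,N_k$ are independent.

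The one point requiring genuine care — and the step I expect to be the main obstacle — is the claim that $\bar T=T\times\Id$ on $\bar X$ inherits properties~\eqref{P1} and~\eqref{P2}, since $\bar T$ is visibly \emph{not} ergodic on $\bar X$ (each copy is invariant) and a literal reading of~\eqref{P1}–\eqref{P2} is about ergodic $T$. The clean fix is not to pass to $\bar X$ at all, but to argue directly at the level of moment measures: for any sets $A_1,\dots,A_n\in\A_f$ and any assignment $\ell:\{1,\dots,n\}\to\{1,\dots,k\}$, the measure $(B_1,\dots,B_n)\mapsto\EE\bigl[\prod_i N_{\ell(i)}(B_i)\bigr]$ is $T^{\times n}$-invariant with marginals absolutely continuous with respect to $\mu$, so Property~\eqref{P2} applies to it directly on $X^n$; then Lemma~\ref{lemma:graph measures}, using the dissociation of the $N_\ell$'s, kills every ergodic component $m_\pi^\kappa$ in which two indices $i,i'$ with $\ell(i)\neq\ell(i')$ lie in a common atom of $\pi$. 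Running the same induction on $n$ as in Theorem~\ref{theo:Free} — splitting $\{1,\dots,n+1\}$ along a subset $K$ via the ergodic theorem — one identifies all mixed moment measures $\EE\bigl[\prod_i N_{\ell(i)}(A_i)\bigr]$ with those of independent Poisson processes of intensities $\alpha_1\mu,\dots,\alpha_k\mu$ (the diagonal-weight normalization giving the base cases as before). Finally, the Laplace-transform/analyticity argument from the second step of the proof of Theorem~\ref{theo:Free}, applied to linear combinations $\sum_{\ell,j}\alpha_{\ell,j}N_\ell(A_{\ell,j})$ over disjoint $A_{\ell,j}$, upgrades equality of all joint moments to equality of joint Laplace transforms, which is exactly the assertion that $N_1,\dots,N_k$ are independent Poisson processes.
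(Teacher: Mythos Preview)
Your approach is correct and shares the overall architecture with the paper: both apply Property~\eqref{P2} to the mixed moment measure $\EE\bigl[\prod_i N_{\ell(i)}(A_i)\bigr]$ on $X^n$, use Lemma~\ref{lemma:graph measures} together with dissociation to exclude ergodic components $m_\pi^\kappa$ in which an atom of $\pi$ mixes indices coming from distinct processes, and finish with the Laplace-transform argument (implicit in the paper's closing ``this is sufficient to obtain the independence between the Poisson processes''). The difference is in the middle step. You propose to rerun the full induction of Theorem~\ref{theo:Free} to pin down every coefficient $c_\pi$ in the ergodic decomposition. The paper takes a shorter route: once $\pi$ is known to refine the block partition $\{Q_1,\dots,Q_k\}$, every ergodic component $m_\pi^\kappa$ automatically factors as $\prod_j\nu_j\bigl(\prod_{i\in Q_j}A_i\bigr)$; hence the whole moment measure is invariant under applying $T$ only to the coordinates in the last block $Q_k$. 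Averaging over these partial shifts and using the ergodic theorem on $(\Omega,\F,\PP,S)$ peels off the factor $\EE\bigl[\prod_{i\in Q_k}N_k(A_i)\bigr]$ in one stroke, and a short induction on $k$ gives the factorization of all joint moments without ever computing any $c_\pi$'s, without needing $\kappa=0$, and without invoking freeness of the individual $N_\ell$. This partial-shift trick is noticeably slicker; your route, while more laborious, has the conceptual payoff of making the $\bar X$ picture explicit and clarifying why the joint process is itself Poisson on the disjoint union.
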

\begin{proof}
Let $n_{1},\dots,n_{k}$ be positive numbers, $n:=n_{1}+\dots+n_{k}$,  and let $\{Q_1,\ldots,Q_k\}$ be the partition of $\{1,\ldots,n\}$ in subsets of consecutive integers of respective size $n_1,\ldots,n_k$. For any $\left\{ A_{i}\right\} _{1\le i\le n}$ in ${\A}_{f}$, set
\begin{equation}
  \label{eq:sigma}\sigma(A_1\times\cdots\times A_n)\egdef\mathbb{E}\left[\prod_{j=1}^k\prod_{i\in Q_j} N_j(A_i)\right].
\end{equation}
This defines a $T^{\times n}$-invariant measure $\sigma$ on $\left(X^{n},{\A}^{\otimes n}\right)$, which satisfies the hypotheses of Property~\eqref{P2}. Hence $\sigma$ has at most countably many ergodic components, of the form 
$m_\pi^\kappa$ for some $\pi\in\P_n$ and some $\pi$-compatible family $\kappa$.
By lemma~\ref{lemma:graph measures}, as the processes $N_{1},\dots,N_{k}$ are mutually dissociated, only partitions $\pi$  refining the partition $\{Q_1,\ldots,Q_k\}$ may appear in the ergodic decomposition of $\sigma$. 
Therefore, any ergodic component $m_\pi^\kappa$ of $\sigma$ has the form
\[
  m_\pi^\kappa(A_1\times\cdots\times A_n)= \prod_{j=1}^k \nu_j\left(\prod_{i\in Q_j}A_i\right),
\]
where each $\nu_j$ is a $T^{\times n_j}$-invariant measure. In particular, any ergodic component of $\sigma$ is invariant by the transformation
$(x_1,\ldots,x_n)\mapsto(y_1,\ldots,y_n)$, where $y_i\egdef Tx_i$ if $i\in Q_k$, and $y_i\egdef x_i$ otherwise. It follows that $\sigma$ itself is invariant by this transformation, hence the expression 
defining $\sigma(A_1\times\cdots\times A_n)$ on the right-hand side of~\eqref{eq:sigma}
is unchanged if we replace $N_k(A_i)$ by $N_k(T^{-1}A_i)$ for all $i\in Q_k$ simultaneously.
Therefore, we can write for any $\left\{ A_{i}\right\} _{1\le i\le n}$ in ${\A}_{f}$ and any $L\ge1$
\begin{align*}
  \mathbb{E}\left[\prod_{j=1}^k\prod_{i\in Q_j} N_j(A_i)\right] & = \frac{1}{L}\sum_{1\le \ell\le L}
  \mathbb{E}\left[\left(\prod_{j=1}^{k-1}\prod_{i\in Q_j} N_j(A_i)\right) \prod_{i\in Q_k} N_k(T^{-\ell} A_i)\right] \\
  & = \mathbb{E}\left[\left(\prod_{j=1}^{k-1}\prod_{i\in Q_j} N_j(A_i)\right) 
  \left(\frac{1}{L}\sum_{1\le \ell\le L} \prod_{i\in Q_k} N_k\circ S^\ell (A_i)\right) \right] .
\end{align*}
By the ergodic theorem, this converges as $L\to\infty$ to 
\[
  \mathbb{E}\left[\prod_{j=1}^{k-1}\prod_{i\in Q_j} N_j(A_i)\right]
  \mathbb{E}\left[ \prod_{i\in Q_k} N_k (A_i)\right].
\]
A straightforward induction on $k$ then yields the equality
\[
  \mathbb{E}\left[\prod_{j=1}^k\prod_{i\in Q_j} N_j(A_i)\right]
  = \prod_{j=1}^k \mathbb{E}\left[\prod_{i\in Q_j} N_j(A_i)\right],
\]
and this is sufficient to obtain the independence between the Poisson
processes.
\end{proof}

Compiling the previous results, we now get the following structure theorem.
\begin{theo}
\label{thm:sushi}
Assume that properties~\eqref{P1} and~\eqref{P2} hold for $T$. Let $N$ be a $T$-point process  with moments of all orders defined
on an ergodic system $\left(\Omega,\F ,\mathbb{P},S\right)$.
Then $N$ is a SuShi.
\end{theo}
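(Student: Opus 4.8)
The plan is to combine the decomposition of a general $T$-point process into free, mutually dissociated pieces (Proposition~\ref{prop:Decomposition}) with the two rigidity results just proved: that free processes are Poisson (Theorem~\ref{theo:Free}) and that dissociated Poisson processes are independent (Proposition~\ref{prop:dissociation}). First I would observe that since $N$ has moments of order $2$, Proposition~\ref{prop:Bout d'orbite infini} guarantees that $N$ almost surely assigns finite mass to every $T$-orbit, so Proposition~\ref{prop:Decomposition} applies and gives a finite or countable family $\{F_i\}_{i\in I}$ of finite subsets of $\ZZ_+$ containing $0$, together with free $T$-point processes $N_{F_i}$, measurable with respect to $N$ and mutually dissociated, with
\[
  N = \sum_{i\in I}\sum_{k\in F_i} N_{F_i}\circ S^{k},\quad \PP\text{-a.s.}
\]

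Next I would check that each $N_{F_i}$ inherits moments of all orders from $N$: for $A\in\A_f$ one has $N_{F_i}(A)\le N(A)$ pointwise (the process $N_{F_i}$ is obtained by discarding points of $N$), so $\EE[N_{F_i}(A)^n]\le\EE[N(A)^n]<\infty$. Since $(\Omega,\F,\PP,S)$ is ergodic and $N_{F_i}$ is free with moments of all orders, Theorem~\ref{theo:Free} applies and shows that each $N_{F_i}$ is a Poisson point process; denote its intensity by $\alpha_i\mu$ (it is a multiple of $\mu$ by ergodicity, $N_{F_i}$ being integrable). The $N_{F_i}$ are mutually dissociated Poisson $T$-point processes on the same ergodic system, so Proposition~\ref{prop:dissociation} gives that the family $(N_{F_i})_{i\in I}$ is independent. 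Then $N=\sum_i\sum_{k\in F_i}T_*^k(N_{F_i})$ is literally a superposition of shifted independent Poisson processes, i.e.\ a SuShi, once we verify the summability condition $\sum_{i\in I}\alpha_i\#F_i<\infty$: this follows because $N$ is integrable with intensity a multiple of $\mu$, and taking expectations in the decomposition on some $A\in\A_f$ yields $\EE[N(A)]=\sum_i\alpha_i(\#F_i)\mu(A)$, which is finite.

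The only genuinely delicate point is the applicability of Proposition~\ref{prop:dissociation}: that proposition is stated for a \emph{finite} family $N_1,\dots,N_k$, whereas $I$ may be countably infinite. I would handle this by noting that independence of a countable family is equivalent to independence of every finite subfamily, and any finite subcollection of the $N_{F_i}$ is again a finite family of mutually dissociated Poisson $T$-point processes on the same ergodic system, to which Proposition~\ref{prop:dissociation} directly applies. One should also note that if $I$ is infinite the series defining $N$ converges because the partial sums are increasing and $\EE[N(A)]<\infty$ for $A\in\A_f$, so $N$ is indeed well-defined and simple (using aperiodicity of $T$). Assembling these observations completes the proof that $N$ is a SuShi.
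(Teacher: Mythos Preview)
Your proof is correct and follows essentially the same route as the paper: finite-orbit mass via Proposition~\ref{prop:Bout d'orbite infini}, decomposition via Proposition~\ref{prop:Decomposition}, then free $\Rightarrow$ Poisson and dissociation $\Rightarrow$ independence. You are in fact more careful than the paper on several points the authors leave implicit: you verify that each $N_{F_i}$ inherits moments of all orders (needed to invoke Theorem~\ref{theo:Free}), you check the summability condition $\sum_i\alpha_i\#F_i<\infty$ required by the definition of a SuShi, and you handle the passage from finite to countable families in Proposition~\ref{prop:dissociation}. Note also that the paper's proof cites Proposition~\ref{prop:PoissonFree} for the step ``each $N_{F_i}$ is Poisson'', which appears to be a slip of the pen for Theorem~\ref{theo:Free}; your citation is the correct one.
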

\begin{proof}
  Since $N$ is square integrable, Proposition~\ref{prop:Bout d'orbite infini} ensures that $N$ almost surely gives a finite measure to any $T$-orbit.
  We can therefore apply Proposition~\ref{prop:Decomposition} to write $N$ as
\[
N=\sum_{i\in I}\left(\sum_{k\in F_{i}}N_{F_{i}}\circ S^{k}\right),\;\mathbb{P}\text{-a.s.}
\]
where $I$ is countable, each $F_i$ is a finite subset of $\ZZ$, and the $T$-point processes $N_{F_i}$ are free and mutually dissociated.
Then Theorem~\ref{theo:Free} proves that each $N_{F_i}$ is a Poisson process, and Proposition~\ref{prop:dissociation} shows that they are independent.
\end{proof}

\section{Application to the structure of Poisson joinings}
\label{sec:joinings}

\subsection{Notions on the $L^{2}$ structure of a Poisson suspension}
\label{sec:L2Poisson}

There is a strong relationship between the $L^{2}$-spaces of the
suspension and the underlying space. Namely, $L^{2}\left(\mu^{*}\right)$
can be seen as the Fock space of $L^{2}\left(\mu\right)$ (see~\cite{Neretin1996}),
that is
\[
L^{2}\left(\mu^{*}\right)\simeq\text{Fock}\left(L^{2}\left(\mu\right)\right):=\mathbb{C}\oplus L^{2}\left(\mu\right)\oplus L^{2}\left(\mu\right)^{\odot2}\oplus\cdots\oplus L^{2}\left(\mu\right)^{\odot n}\oplus\cdots,
\]
where $L^{2}\left(\mu\right)^{\odot n}$ stands for the $n$-order
symmetric tensor power of $L^{2}\left(\mu\right)$, and the inner
product given on $L^{2}\left(\mu\right)^{\odot n}$ is given by
\[
\left\langle f^{\otimes n},g^{\otimes n}\right\rangle _{\text{Fock}\left(L^{2}\left(\mu\right)\right)}:=\frac{1}{n!}\left\langle f,g\right\rangle ^{n}.
\]

This means there is a sequence $\left\{ H^{n}\right\} _{n\ge0}$ of
so-called \emph{(Poissonian) chaos} which are orthogonal subspaces inside $L^{2}\left(\mu^{*}\right)$,
such that $L^{2}\left(\mu^{*}\right)=
{\bigoplus_{n\ge0}H^{n}}$,
and where, for each $n\ge1$,  $H^{n}$ is identified to $L^{2}\left(\mu\right)^{\odot n}$
through multiple integrals ($H^{0}$ corresponds to constant functions,
identified to $\mathbb{C}$). In this paper we only need to know what
happens in the first chaos: $H^{1}$ is linearly spanned by functions
$N\left(A\right)-\mu\left(A\right)$, for $A\in\mathcal{A}_{f}$,
and $N\left(A\right)-\mu\left(A\right)\in H^{1}$ corresponds to $\ind{A}\in L^{2}\left(\mu\right)$.
We have the isometry relation
\[
\Bigl\langle N\left(A\right)-\mu\left(A\right),N\left(B\right)-\mu\left(B\right)\Bigr\rangle _{L^{2}\left(\mu^{*}\right)}=\left\langle \ind{A},\ind{B}\right\rangle _{L^{2}\left(\mu\right)}.
\]

If $\varphi$ is a linear operator from  $L^{2}\left(\mu_1\right)$ to  $L^{2}\left(\mu_2\right)$,
of norm less than or equal to $1$, then $\varphi$ extends naturally to an operator
$\widetilde{\varphi}$, called the \emph{exponential of $\varphi$},
 from $\text{Fock}\left(L^{2}\left(\mu_1\right)\right)$ to  $\text{Fock}\left(L^{2}\left(\mu_2\right)\right)$, by 
\[
\widetilde{\varphi}\left(f^{\otimes n}\right):=\varphi\left(f\right)\otimes\cdots\otimes\varphi\left(f\right).
\]
In particular, if $U_{T}$ and $U_{T_{*}}$ denote the unitary operators
associated to $T$ and $T_{*}$ on their respective spaces, then,
through the identification, we have
\[
U_{T_{*}}=\widetilde{U_{T}}.
\]

\subsection{Poisson factors and Poisson joinings}
\label{sec:PoissonJoinings}
There is a distinguished collection of factors within a Poisson suspension
$\left(X^{*},{\A}^{*},\mu^{*},T_{*}\right)$ that are also Poisson suspensions:
\begin{definition}[Poisson factor]
Let $Y\subset X$ be a $T$-invariant measurable set, and let $\mathcal{C}\subset{\A}_{\mid Y}$
be a $\sigma$-finite sub-$\sigma$-algebra of ${\A}$ restricted
to $Y$. Then the Poisson suspension $\left(Y^{*},\mathcal{C}^{*},\mu^{*},T_{*}\right)$ is a natural factor of $\left(X^{*},{\A}^{*},\mu^{*},T_{*}\right)$ via the factor map
\[
  \xi\in X^{*}\longmapsto \xi|_{\mathcal{C}}\in Y^{*}.
\]
Such a factor is called a \emph{Poisson factor} of the suspension $\left(X^{*},{\A}^{*},\mu^{*},T_{*}\right)$. 
\end{definition}

Let $\left(Y_{i},\mathcal{Y}_{i},\rho_{i},R_{i}\right)$, $i\in I$, be a finite or countable family of dynamical systems. We recall that a \emph{joining} of these dynamical systems
is a measure on $\prod_iY_{i}$, invariant by the product transformation $\prod_i R_i: (y_i)_{i\in I} \mapsto (R_i y_i)_{i\in I}$, and whose marginal on each coordinate $i$ is $\rho_i$. 

Observe that this definition is not restricted to probability measure
preserving systems, but extends to the case where measures are $\sigma$-finite.
However it is worth to note that the product measure is not a joining in the infinite measure case (its marginals are not $\sigma$-finite).

To a joining $m$ of two systems $\left(Y_{i},\mathcal{Y}_{i},\rho_{i},R_{i}\right)$, $i=1,2$, corresponds a Markov operator $\varphi:L^2(\rho_1)\to L^2(\rho_2)$,
defined by
\[
  \forall A\in \mathcal{Y}_1, B\in \mathcal{Y}_2 \text{ with finite measure,}\quad m(A\times B) \egdef \int_{B} \varphi(\ind{A})\, d\rho_2.
\]

A \emph{self-joining of order $n$} is a joining of $n$ identical
copies of the same system. 

\medskip

The structure of Poisson suspensions allows one to define a natural family of joinings which plays a central role in this work.

\begin{definition}[Poisson joining]
\label{def:PoissonJoining}
 Let $\left(X_{i}^{*},{\A}_{i}^{*},\mu_{i}^{*},\left(T_{i}\right)_{*}\right)$, $i\in I$ be a finite or countable family of Poisson suspensions.
 Assume that  $(Z^*,\Z^*,m^*,R_*)$ is some other Poisson suspension, that  $(Z_i)_{i\in I}$ is a family of $R$-invariant subsets of $Z$, 
 and that for each $i\in I$ we are given a measurable map $\pi_i: Z_i\to X_i$
 such that
 \begin{itemize}
   \item $(\pi_i)_*(m|_{Z_i}) = \mu_i$,
   \item $\pi_i\circ R=T_i\circ \pi_i$.
 \end{itemize}
Let $N$ be a Poisson $R$-point process of distribution $m^*$. Then the distribution of $\Bigl((\pi_i)_*(N|_{Z_i})\Bigr)_{i\in I}$ is a joining of the Poisson suspensions 
$\left(X_{i}^{*},{\A}_{i}^{*},\mu_{i}^{*},\left(T_{i}\right)_{*}\right)$, which we call  a \emph{Poisson joining}.
\end{definition}

Let us recall the probabilistic notion of infinite divisibility, which is useful for the study of 
Poisson joinings. 
The addition of $\sigma$-finite measures on $\left(X,{\A}\right)$
is measurable and well defined and so is the convolution of distributions
on $\left(\widetilde{X},\widetilde{\A}\right)$: $m_{1}*m_{2}$ is
the pushforward measure of $m_{1}\otimes m_{2}$ by the application
\[
\begin{array}{ccc}
\left(\widetilde{X}\times\widetilde{X},\widetilde{\A}\otimes\widetilde{\A}\right) & \to & \left(\widetilde{X},\widetilde{\A}\right)\\
\left(\nu_{1},\nu_{2}\right) & \mapsto & \nu_{1}+\nu_{2}
\end{array}
\]

\begin{definition}[Infinite divisibility]
A probability measure $m$ on $\left(\widetilde{X},\widetilde{\A}\right)$
is said to be \emph{infinitely divisible} if, for every $k\ge2$, there exists
a probability measure $m_{k}$ such that
\[
m=(m_{k})^{*k}:=m_{k}*\cdots*m_{k}.
\]
\end{definition}
The distribution of a Poisson point process is easily seen to be infinitely
divisible, as we have
\[
\mu^{*}=\left(\left(\frac{1}{k}\mu\right)^*\right)^{*k},
\]
which is the formula capturing the fact that the independent superposition
of $k$ Poisson point processes of intensities $\frac{1}{k}\mu$ is
a Poisson point process of intensity $\mu$.

Observe that a pair of measures $(\xi_1,\xi_2)\in \tilde X_1\times \tilde X_2$ is naturally identified with a measure on the disjoint union $X_1\sqcup X_2$. 
Therefore, a distribution on $\tilde X_1\times \tilde X_2$ is itself identified to a distribution on $\widetilde{(X_1\sqcup X_2)}$, and we use this identification to define infinite divisibility of a joining of Poisson suspensions. 

Poisson joinings of two Poisson suspensions were defined independently using Markov operators in \cite{Lem05ELF}, 
and infinite divisibility in \cite{Roy07Infinite}, where both definitions were proved to be equivalent. Combining the results of these two papers, we get the following proposition.

\begin{prop}
\label{prop:inf-divis}
Let $\gamma$ be a joining of two Poisson suspensions. The following properties are equivalent:
\begin{enumerate}
  \item $\gamma$ is a Poisson joining.
  \item $\gamma$ is infinitely divisible.
  \item The Markov operator associated to $\gamma$ is the exponential of a sub-Markov operator defined between the
  $L^2$ spaces of the bases. 
\end{enumerate}
\end{prop}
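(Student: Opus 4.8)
The plan is to establish the cycle $(1)\Rightarrow(2)\Rightarrow(3)\Rightarrow(1)$, recombining the arguments of~\cite{Roy07Infinite} and~\cite{Lem05ELF}. Throughout, $\gamma$ is a joining of two Poisson suspensions $\left(X_{i}^{*},\A_{i}^{*},\mu_{i}^{*},(T_{i})_{*}\right)$, $i=1,2$, viewed (via the identification recalled just before the statement) as a distribution on $\widetilde{X_1\sqcup X_2}$ invariant under the pushforward of the transformation of $X_1\sqcup X_2$ that restricts to $T_i$ on $X_i$. For $(1)\Rightarrow(2)$: suppose $\gamma$ is the Poisson joining attached to a Poisson suspension $(Z^*,\Z^*,m^*,R_*)$, $R$-invariant subsets $Z_i$, and maps $\pi_i$ as in Definition~\ref{def:PoissonJoining}. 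The map $\Psi\colon\xi\mapsto\bigl((\pi_i)_*(\xi|_{Z_i})\bigr)_{i}$ from $\widetilde Z$ to $\widetilde{X_1}\times\widetilde{X_2}$ is \emph{additive}, $\Psi(\xi+\xi')=\Psi(\xi)+\Psi(\xi')$, because restriction and pushforward of measures are linear. Pushing the identity $m^*=\bigl((\tfrac1k m)^*\bigr)^{*k}$ forward by $\Psi$ therefore gives $\gamma=(\gamma_k)^{*k}$ with $\gamma_k\egdef\Psi_*\bigl((\tfrac1k m)^*\bigr)$, so $\gamma$ is infinitely divisible.

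For $(2)\Rightarrow(3)$ and $(2)\Rightarrow(1)$ I would go through the canonical measure. Since the marginals $\mu_1^*,\mu_2^*$ of $\gamma$ have no deterministic part, neither does $\gamma$, so by the structure theory of infinitely divisible random measures (see~\cite{DaleyVereJonesI}) it admits a canonical measure (also called the L\'evy, or KLM, measure): a $\sigma$-finite measure $Q$ on $\widetilde{X_1\sqcup X_2}\setminus\{0\}$ such that the Laplace functional of $\gamma$ at $(f_1,f_2)$ equals $\exp\bigl(-\int(1-e^{-\langle\nu,f\rangle})\,dQ(\nu)\bigr)$. Uniqueness of $Q$ forces it to be invariant under the dynamics (because $\gamma$ is), and forces the pushforward of $Q$ by each coordinate projection, restricted to nonzero measures, to coincide with the canonical measure of the corresponding Poisson suspension, namely the image of $\mu_i$ under $x\mapsto\delta_x$, which is carried by single Dirac masses. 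Hence $Q$-a.e.\ pair $(\xi_1,\xi_2)$ has each $\xi_i$ equal to $0$ or to a single Dirac, and splitting $Q$ accordingly yields
\[
Q=\int_{X_1}\delta_{(\delta_x,0)}\,d\mu_1'(x)+\int_{X_2}\delta_{(0,\delta_y)}\,d\mu_2'(y)+\int_{X_1\times X_2}\delta_{(\delta_x,\delta_y)}\,d\lambda(x,y),
\]
where $\mu_1',\mu_2'$ are $\sigma$-finite measures and $\lambda$ is a $T_1\times T_2$-invariant $\sigma$-finite measure on $X_1\times X_2$; the marginal conditions on $\gamma$ translate into $\mu_i'+(\text{$i$-th marginal of }\lambda)=\mu_i$. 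In particular $\lambda$ is a subjoining of the two bases, hence is associated to a sub-Markov operator $\varphi\colon L^2(\mu_1)\to L^2(\mu_2)$ of norm $\le1$. Recomputing the Laplace functional of $\gamma$ from this expression of $Q$ and comparing with that of the image of the Poisson measure on $Z\egdef X_1\sqcup X_2\sqcup(X_1\times X_2)$ — equipped with the measure $\mu_1'+\mu_2'+\lambda$ on its respective components, the transformation acting as $T_1$, $T_2$, and $T_1\times T_2$, and the maps $\pi_i$ being the identity on the $X_i$-component and the $i$-th coordinate projection on the product component — shows at once that the Markov operator of $\gamma$ is the Fock exponential $\widetilde\varphi$, which is~(3), and that $\gamma$ is exactly the Poisson joining produced by this data, which is~(1).

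For $(3)\Rightarrow(2)$: if the Markov operator of $\gamma$ is $\widetilde\varphi$ with $\varphi$ sub-Markov, then $\varphi$ comes from a subjoining $\lambda$ and the construction of the previous paragraph produces a Poisson joining with the same Markov operator; since a joining is determined by its Markov operator and Poisson joinings are infinitely divisible by the first step, $\gamma$ is infinitely divisible. The main obstacle is the middle step: one has to set up the canonical measure carefully in the $\sigma$-finite, infinite-intensity setting, establish its uniqueness and its invariance under the dynamics, and then use the marginal constraints to collapse an a priori complicated infinitely divisible structure down to the single subjoining $\lambda$. One must also check that the three descriptions — the canonical measure $Q$, the Poisson-joining data $(Z,m,R,\pi_i)$, and the exponential operator $\widetilde\varphi$ — all yield literally the same joining $\gamma$; this rests on the facts that a distribution on a Polish space of measures is determined by its Laplace functional and an infinitely divisible one by its canonical measure. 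Finally, the boundedness of $\varphi$ on $L^2$ — needed even to speak of a Markov operator and of its Fock exponential — is precisely the statement that $\lambda$, having marginals dominated by $\mu_1$ and $\mu_2$, induces an operator of norm at most~$1$.
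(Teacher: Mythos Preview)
The paper does not give a proof of this proposition: it simply states that the equivalence is obtained by combining the results of~\cite{Lem05ELF} (where Poisson joinings are characterized via Markov operators) and~\cite{Roy07Infinite} (where they are characterized via infinite divisibility), and that these characterizations were shown there to coincide. Your proposal is therefore not to be compared with a proof in the paper, but rather with the arguments of those two references; what you sketch is indeed a faithful outline of that material. The implication $(1)\Rightarrow(2)$ via additivity of $\Psi$ is the easy direction, the passage through the KLM/canonical measure to reduce an infinitely divisible joining to a sub-joining $\lambda$ of the bases is the heart of~\cite{Roy07Infinite}, and the identification of the Markov operator with the Fock exponential $\widetilde\varphi$ is the viewpoint of~\cite{Lem05ELF}. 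Your honest flagging of the technical point (setting up the canonical measure in the $\sigma$-finite, infinite-intensity setting and checking its uniqueness and equivariance) is exactly where the work lies; one minor remark is that your announced ``cycle $(1)\Rightarrow(2)\Rightarrow(3)\Rightarrow(1)$'' does not quite match what you actually do, since you prove $(2)\Rightarrow(1)$ and $(2)\Rightarrow(3)$ together and then $(3)\Rightarrow(2)$ separately, but the logical content is complete.
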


In~\cite{Lem05ELF} and~\cite{Roy07Infinite}, Poisson joinings of two Poisson suspensions 
$\left(X_{i}^{*},{\A}_{i}^{*},\mu_{i}^{*},\left(T_{i}\right)_{*}\right)$, $i=1,2$, are in fact  characterized 
by the following structure, which is easily seen to fit our definition~\ref{def:PoissonJoining}. 
Let us first fix two measures $\gamma_{1}\le\mu_{1}$ and $\gamma_{2}\le\mu_{2}$, respectively invariant by $T_1$ and $T_2$. 
Then consider a joining $\left(X_{1}\times X_{2},{\A}_{1}\otimes{\A}_{2},m,T_{1}\times T_{2}\right)$
of $\left(X_{1},{\A}_{1},\gamma_{1},T_{1}\right)$ and
$\left(X_{2},{\A}_{2},\gamma_{2},T_{2}\right)$, and form the Poisson suspension 
\[\left(\left(X_{1}\times X_{2}\right)^{*},\left({\A}_{1}\otimes{\A}_{2}\right)^{*},m^{*},\left(T_{1}\times T_{2}\right)_{*}\right).
  \]
Now we project the points of the Poisson process on $X_{1}\times X_{2}$
of intensity $m$ on both axes $X_{1}$ and $X_{2}$,  getting two
Poisson processes with intensities $\gamma_{1}$ and $\gamma_{2}$.
This defines a factor map 
\[
\nu\in\left(X_{1}\times X_{2}\right)^{*}\mapsto\Bigl(\nu\left(\cdot\times X_{2}\right),\nu\left(X_{1}\times\cdot\right)\Bigr) \in X_1^*\times X_2^*,\]
and the factor we obtain is a joining $\left(X_{1}^{*}\times X_{2}^{*},{\A}_{1}^{*}\otimes{\A}_{2}^{*},\widetilde{m},\left(T_{1}\right)_{*}\times\left(T_{2}\right)_{*}\right)$
of the two Poisson suspensions $\left(X_{1}^{*},{\A}_{1}^{*},\gamma_{1}^{*},\left(T_{1}\right)_{*}\right)$
and $\left(X_{2}^{*},{\A}_{2}^{*},\gamma_{2}^{*},\left(T_{2}\right)_{*}\right)$.
In order to adjust intensities when $\gamma_i < \mu_i$, we superpose on each side an independent
Poisson process of intensity $\mu_i-\gamma_i$, $i=1,2$. Formally, we consider the direct product of 
the three systems $\left(X_{1}^{*},{\A}_{1}^{*},\left(\mu_{1}-\gamma_{1}\right)^{*},\left(T_{1}\right)_{*}\right)$,
$\left(X_{1}^{*}\times X_{2}^{*},{\A}_{1}^{*}\otimes{\A}_{2}^{*},\widetilde{m},\left(T_{1}\right)_{*}\times\left(T_{2}\right)_{*}\right)$
and $\left(X_{2}^{*},{\A}_{2}^{*},\left(\mu_{2}-\gamma_{2}\right)^{*},\left(T_{2}\right)_{*}\right)$,
and form 
\[
\left(X_{1}^{*}\times X_{2}^{*},{\A}_{1}^{*}\otimes{\A}_{2}^{*},\widehat{m},\left(T_{1}\right)_{*}\times\left(T_{2}\right)_{*}\right)
\]
through the factor map from $\Bigl(X_{1}^{*}\times\left(X_{1}^{*}\times X_{2}^{*}\right)\times X_{2}^{*}\Bigr)$
to $\left(X_{1}^{*}\times X_{2}^{*}\right)$ defined by
\[
\left(\widetilde{\nu}_{1},\left(\nu_{1},\nu_{2}\right),\widetilde{\nu}_{2}\right)\mapsto\left(\widetilde{\nu}_{1}+\nu_{1},\nu_{2}+\widetilde{\nu}_{2}\right).
\]
Then $\widehat{m}$ is a Poisson joining of the two Poisson suspensions $\left(X_{i}^{*},{\A}_{i}^{*},\mu_{i}^{*},\left(T_{i}\right)_{*}\right)$, $i=1,2$.

The sub-Markov operator mentioned in Proposition~\ref{prop:inf-divis} corresponds to the joining $m$ in the above description, which can also be seen as a 
  \emph{sub-joining} of $\left(X_{1},{\A}_{1},\mu_{1},T_{1}\right)$ and
$\left(X_{2},{\A}_{2},\mu_{2},T_{2}\right)$.

\begin{example}
The main situation that occurs in this paper is a Poisson self-joining of order~2
of $\left(X^{*},{\A}^{*},\mu^{*},T_{*}\right)$ where
\[
m:=\sum_{k\in\mathbb{Z}}a_{k}\Delta_{T^{k}}
\]
with $a_{k}\ge0$ and $\sum_{k\in\mathbb{Z}}a_{k}\le1$. If $a_{k}=1$
for some $k$, the corresponding Poisson
joining $\widehat{m}$ is the graph joining $\Delta_{(T_{*})^{k}}$. If $\sum_{k\in\mathbb{Z}}a_{k}=0$,
then $\widehat{m}$ is the product joining. In the other cases, we get a combination
of both.\end{example}

\subsection{Embedding joinings in a universal suspension}
\label{sec:BigMama}

\begin{lemma}
\label{lemma:multiple Poisson}Assume that properties~\eqref{P1} and~\eqref{P2} hold for $T$.
Let $\left\{ N_{i}\right\} _{i\in I}$
and $N$ be Poisson $T$-point processes defined on the ergodic
dynamical system $\left(\Omega,\F ,\mathbb{P},S\right)$, where
$I$ is at most countable and the $\left\{ N_{i}\right\} _{i\in I}$
are independent. 

Then there exists a collection of (eventually
vanishing) independent Poisson $T$-point processes $\left( N_\infty, \left\{ N_{i,k}\right\} _{i\in I,\, k\in\mathbb{Z}\cup\infty}\right)$,
 also defined on $\left(\Omega,\F ,\mathbb{P}\right)$,
measurable with respect to $\sigma\left(N,\,\left\{ N_{i}\right\} _{i\in I}\right)$,
such that
\[
N_{i} = N_{i,\infty}+\sum_{k\in\mathbb{Z}}N_{i,k},
\]
and
\[
N = N_{\infty}+\sum_{i\in I}\sum_{k\in\mathbb{Z}}N_{i,k}\circ S^{k}.
\]
\end{lemma}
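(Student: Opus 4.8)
The plan is to construct all the announced processes as sub-point-processes of $N$ and of the $N_i$, obtained by tracking which points sit on a common $T$-orbit, and then to identify these pieces as independent Poisson processes by invoking the rigidity results already established (Theorem~\ref{theo:Free} and Proposition~\ref{prop:dissociation}).

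First, for $i\in I$ and $k\in\ZZ$, I would define $N_{i,k}$ by keeping, for $\PP$-a.e.\ $\omega$, those points $x\in N_i(\omega)$ such that $T^kx\in N(\omega)$; this is a measurable sub-point-process of $N_i$, hence measurable with respect to $\sigma\bigl(N,\{N_i\}_{i\in I}\bigr)$. Since the Poisson process $N$ is $T$-free (Proposition~\ref{prop:PoissonFree}), a given $x\in N_i$ satisfies $T^kx\in N$ for at most one $k$, so the $N_{i,k}$, $k\in\ZZ$, are pairwise disjoint and we may set $N_{i,\infty}\egdef N_i-\sum_{k\in\ZZ}N_{i,k}\ge0$, which collects the points of $N_i$ whose whole $T$-orbit misses $N$. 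The identity $N_i=N_{i,\infty}+\sum_{k\in\ZZ}N_{i,k}$ then holds by construction. For the decomposition of $N$, observe that the independence of the $N_i$ forces them to be pairwise $T$-dissociated (conditionally on $N_{i'}$, the random set $T^{-m}N_{i'}$ is countable, hence $\mu$-null, so $N_i$ charges it with probability $0$, for every $m$). Using the $T$-freeness of $N$ and of the $N_i$ together with this pairwise dissociation, one checks that no point of $N$ can be written as $T^kx$ with $x\in N_{i,k}$ in two distinct ways; consequently $\sum_{i\in I}\sum_{k\in\ZZ}T_*^kN_{i,k}\le N$, and setting $N_\infty\egdef N-\sum_{i\in I}\sum_{k\in\ZZ}T_*^kN_{i,k}\ge0$ and recalling that $N_{i,k}\circ S^k=T_*^kN_{i,k}$ gives $N=N_\infty+\sum_{i\in I}\sum_{k\in\ZZ}N_{i,k}\circ S^k$.

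Next I would recognize each of $N_\infty$ and $N_{i,k}$, $k\in\ZZ\cup\{\infty\}$, as a (possibly vanishing) Poisson process. Each is a $T$-point process on the ergodic system $(\Omega,\F,\PP,S)$: the relation $N_{i,k}(A)\circ S=N_{i,k}(T^{-1}A)$, and similarly for $N_\infty$, follows from $N\circ S=T_*\circ N$ and $N_i\circ S=T_*\circ N_i$. Being dominated by some $N_i$ or by $N$, each has moments of all orders, and its intensity, being $T$-invariant and absolutely continuous with respect to $\mu$, is a nonnegative multiple of $\mu$ by ergodicity of $T$. Finally each is free, as a sub-point-process of the free process $N_i$ or $N$. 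Whenever the intensity is positive, Theorem~\ref{theo:Free} (which uses~\eqref{P1} and~\eqref{P2}) applies and the process is Poisson; whenever it is zero the process vanishes a.s., accounting for the ``eventually vanishing'' clause.

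It remains to prove the independence of the countable family $\bigl(N_\infty,\{N_{i,k}\}_{i\in I,\,k\in\ZZ\cup\{\infty\}}\bigr)$; since all its members are Poisson, by Proposition~\ref{prop:dissociation} it is enough to check pairwise $T$-dissociation. This is a short case analysis: $N_{i,k}$ and $N_{i,k'}$ with the same $i$ and $k\ne k'$ are dissociated because a nonzero shift would put two points of the free process $N_i$ on one orbit, while a zero shift is excluded since distinct $N_{i,k}$ are disjoint in $N_i$; $N_{i,k}$ and $N_{i',k'}$ with $i\ne i'$ are dissociated because a common orbit would contradict the dissociation of $N_i$ and $N_{i'}$; and $N_\infty$ is dissociated from each $N_{i,k}$ because, by the $T$-freeness of $N$, a point of $N_\infty$ coinciding with a point of $T_*^mN_{i,k}$ would either lie in $\sum_{i,k}T_*^kN_{i,k}$ (if $k\in\ZZ$) or force a point of $N_i$ to belong simultaneously to $N_{i,\infty}$ and to some $N_{i,m}$ (if $k=\infty$), both impossible. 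Proposition~\ref{prop:dissociation} then yields independence, completing the proof. The main obstacle is the bookkeeping in the second paragraph — showing that the orbit-tracking construction never double-counts a point of $N$, so that $\sum_{i,k}T_*^kN_{i,k}\le N$ — which is exactly where the freeness of $N$ and of the $N_i$ and the mutual dissociation of the $N_i$ are all needed; once that is settled, the rigidity results apply almost mechanically.
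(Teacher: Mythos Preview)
Your proof is correct and follows essentially the same route as the paper's: define $N_{i,k}$ by orbit-tracking, use freeness of $N$ and of the $N_i$ (Proposition~\ref{prop:PoissonFree}) to obtain the two decompositions, then apply Theorem~\ref{theo:Free} and Proposition~\ref{prop:dissociation}. You actually supply more detail than the paper, in particular the argument that the independent $N_i$ are pairwise $T$-dissociated and the case analysis ensuring $\sum_{i,k}T_*^kN_{i,k}\le N$, which the paper leaves implicit under a single ``Hence''.
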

\begin{proof}
For each $i$, we consider the pair $\left(N,N_{i}\right)$.
For every $k\in\mathbb{Z}$, the points $x\in N_{i}\left(\omega\right)$
such that $T^{k}x\in N\left(\omega\right)$ define a point process
$N_{i,k}$. By Proposition~\ref{prop:PoissonFree}, $N$ and the $N_i$ are free. Hence, we obtain
\[
\sum_{k\in\mathbb{Z}}N_{i,k}\le N_{i}
\]
 and
\[
\sum_{i\in I}\sum_{k\in\mathbb{Z}}N_{i,k}\circ S^{k}\le N.
\]
We define also $N_{i,\infty}:=N_{i}-\sum_{k\in\mathbb{Z}}N_{i,k}$
and $N_{\infty}:=N-\sum_{i\in I}\sum_{k\in\mathbb{Z}}N_{i,k}\circ S^{k}$.
Then the processes $\left\{ N_{i,k}\right\} _{i\in I,\, k\in\mathbb{Z}\cup\infty}$
and $N_{\infty}$ are free and mutually dissociated $T$-point processes
defined on the ergodic dynamical system $\left(\Omega,\F ,\mathbb{P},S\right)$.
They are therefore independent Poisson $T$-point processes by Theorem~\ref{theo:Free} and Proposition~\ref{prop:dissociation}.
\end{proof}

\begin{theo}
\label{theo:Any-ergodic-countable}
Assume that properties~\eqref{P1} and~\eqref{P2} hold for $T$. 
Let $(\alpha_1,\alpha_2,\ldots)$  be a finite or countable family of positive real numbers.
Then any ergodic joining of the family of Poisson suspensions $\left(X^{*},{\A}^{*},(\alpha_i\mu)^{*},T_{*}\right)$
is a Poisson joining, which can be obtained as a 
factor of the ergodic Poisson suspension 
$$\left(\left(X\times\mathbb{R}_{+}\right)^{*},\left({\A}\otimes\mathcal{B}\right)^{*},\left(\mu\otimes\lambda\right)^{*},\left(T\times \Id\right)_{*}\right),$$
where $\mathcal{B}$ and $\lambda$ denote the Borel $\sigma$-algebra and the Lebesgue measure on $\RR_+$ respectively.
\end{theo}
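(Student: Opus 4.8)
The plan is to reduce an arbitrary ergodic joining of the family $\bigl(X^{*},{\A}^{*},(\alpha_i\mu)^{*},T_{*}\bigr)$ to a $T$-point process situation to which Theorem~\ref{thm:sushi} (hence the SuShi structure) applies, and then to repackage the resulting independent shifted Poisson components as projections of a single big Poisson process on $X\times\RR_+$. More precisely, suppose $\lambda$ is an ergodic joining of the given family, realised on the product space $\prod_i X^{*}$ with the product transformation $\prod_i T_{*}$. Set $\Omega\egdef \prod_i X^{*}$, $\PP\egdef\lambda$, $S\egdef \prod_i T_{*}$, and for each index $i$ let $N_i$ be the $i$-th coordinate projection $\Omega\to X^{*}$. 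Each $N_i$ is then a $T$-point process on the \emph{ergodic} system $(\Omega,\F,\PP,S)$ whose law is $(\alpha_i\mu)^{*}$, so each $N_i$ individually is Poisson, and together they have moments of all orders coordinate-wise. The key move is to consider the superposition $N\egdef \sum_i N_i$ (after rescaling, or rather directly): because each $N_i$ has intensity $\alpha_i\mu$ and $\sum_i\alpha_i<\infty$ is \emph{not} assumed, one instead works with each $N_i$ separately but simultaneously, applying Proposition~\ref{prop:Decomposition}, Proposition~\ref{prop:PoissonFree}, Proposition~\ref{prop:dissociation} and Theorem~\ref{theo:Free} to the \emph{joint} family. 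Concretely, Lemma~\ref{lemma:multiple Poisson} is exactly the tool: applying it repeatedly (or its natural multi-process extension) to the family $\{N_i\}_{i\in I}$ on $(\Omega,\F,\PP,S)$ decomposes all interactions into independent free Poisson pieces $\{N_{i,k}\}$ together with ``private'' parts $N_{i,\infty}$, all measurable with respect to the joining and all mutually independent.

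The next step is to translate this independent family of free Poisson $T$-point processes into a Poisson joining in the sense of Definition~\ref{def:PoissonJoining}. The idea is standard: an independent collection of Poisson $T$-point processes of intensities $\beta_\ell\mu$ can be assembled as the ``colouring'' of a single Poisson process on $X\times\RR_+$, where the $\RR_+$ coordinate of a point records which sub-process (and, via a further bookkeeping coordinate absorbed into $\RR_+$ since $\RR_+$ is a standard Borel space of infinite Lebesgue measure, which shift $S^{k}$) it belongs to. More precisely, I would carve $\RR_+$ into countably many disjoint intervals $E_{i,k}$ of Lebesgue measure equal to the intensity coefficient of $N_{i,k}$ (and similar intervals for the $N_{i,\infty}$ and $N_\infty$), define $R\egdef T\times\Id$ on $X\times\RR_+$, let $\mathcal N$ be the identity Poisson process on $\bigl((X\times\RR_+)^{*},({\A}\otimes\B)^{*},(\mu\otimes\lambda)^{*}\bigr)$, and let $\pi_i\colon (X\times\RR_+)\supset Z_i\to X$ be defined on the $R$-invariant set $Z_i\egdef X\times(\bigsqcup_k E_{i,k}\sqcup E_{i,\infty})$ by $\pi_i(x,t)\egdef T^{-k}x$ when $t\in E_{i,k}$ and $\pi_i(x,t)\egdef x$ when $t\in E_{i,\infty}$. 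One checks $\pi_i\circ R=T\circ\pi_i$ and $(\pi_i)_*(\mu\otimes\lambda|_{Z_i})=\alpha_i\mu$, so that $\bigl((\pi_i)_*(\mathcal N|_{Z_i})\bigr)_{i\in I}$ is, by definition, a Poisson joining of the family, and by the matching of all the (independent) free Poisson components established via Lemma~\ref{lemma:multiple Poisson} it has the same joint law as $(N_i)_{i\in I}$, i.e.\ it realises $\lambda$. Ergodicity of the ambient suspension $\bigl((X\times\RR_+)^{*},(\mu\otimes\lambda)^{*},(T\times\Id)_{*}\bigr)$ follows from the basic Poisson-suspension ergodicity criterion recalled in the text, since $T\times\Id$ has no invariant set of finite positive $\mu\otimes\lambda$-measure (such a set would project to a $T$-invariant set in ${\A}_f$, contradicting ergodicity of $T$).

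The main obstacle I expect is bookkeeping of the shifts: the components $N_{i,k}$ produced by Lemma~\ref{lemma:multiple Poisson} are not individually $S$-invariant, only the bundle $\sum_k N_{i,k}\circ S^{k}$ recombines correctly, so the factor maps $\pi_i$ must incorporate the shift $T^{-k}$ on the slice $E_{i,k}$, and one has to verify carefully that after applying the colouring construction on $X\times\RR_+$ and reading off the coordinates via the $\pi_i$, the \emph{joint} law — including all cross-correlations between different indices $i\neq i'$ and different shifts — is reproduced. This is where the ergodic-theorem arguments behind Proposition~\ref{prop:dissociation} and the freeness from Proposition~\ref{prop:PoissonFree} do the real work: dissociated Poisson $T$-point processes are independent, so the only correlations surviving are the ``diagonal'' ones recorded by which common orbit (and which relative shift) two points lie on, and these are precisely encoded by having the same $\RR_+$-label in the big process. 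A secondary technical point is handling a \emph{countably infinite} family $I$ and making the successive applications of Lemma~\ref{lemma:multiple Poisson} converge; this is routine since at each stage the new pieces refine the previous ones and the total intensity on any set in ${\A}_f$ stays finite, but it needs to be stated with a limiting argument. Finally, one records the consequence that the joining is a Poisson joining, which follows immediately from Definition~\ref{def:PoissonJoining} once the factor-of-$(T\times\Id)_*$ representation is in place.
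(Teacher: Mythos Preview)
Your strategy matches the paper's: iterate Lemma~\ref{lemma:multiple Poisson} to splinter the coordinate processes into independent free Poisson pieces, then realise those pieces as restrictions of the big Poisson process on $X\times\RR_+$ to disjoint horizontal slabs. However, as written there are two genuine gaps.

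First, Lemma~\ref{lemma:multiple Poisson} requires the family $\{N_i\}$ in its hypothesis to be \emph{independent}, and the coordinate projections of an arbitrary ergodic joining are not. You cannot apply the lemma directly to $(N_i)_{i\in I}$. The paper resolves this by an explicit induction: at stage $n$ one has an independent family $(M_i)_{i\in I}$, measurable with respect to $\sigma(N_1,\dots,N_n)$, together with a representation $N_j=\sum_{i\in I_j}M_i\circ S^{k(i,j)}$ for each $j\le n$; one then applies Lemma~\ref{lemma:multiple Poisson} with this $(M_i)$ as the independent family and $N=N_{n+1}$, producing a finer independent family and extending the representation to $N_{n+1}$. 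Your phrase ``applying it repeatedly'' gestures at this, but the independence hypothesis check at each step is the crux and must be explicit.

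Second, your concrete factor map fails. If the intervals $E_{i,k}$ are pairwise disjoint and $i$ indexes the \emph{joining coordinates}, then the sets $Z_i=X\times\bigl(\bigsqcup_k E_{i,k}\sqcup E_{i,\infty}\bigr)$ are pairwise disjoint, so the restrictions $\mathcal N|_{Z_i}$ are independent and your $(\pi_i)_*(\mathcal N|_{Z_i})$ realise only the product joining. The correct bookkeeping---which your later remark about ``the same $\RR_+$-label'' hints at but your formula contradicts---is that the intervals $J_i$ are indexed by the \emph{independent atoms} $M_i$ produced by the induction, and each joining coordinate $N_j$ is assigned the subset $Z_j=X\times\bigcup_{i\in I_j}J_i$ with $\pi_j=T^{k(i,j)}\circ\pi$ on $X\times J_i$. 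Distinct $j$'s then share intervals whenever $I_j\cap I_{j'}\neq\emptyset$, with different shift maps on the overlap, and this overlap is precisely what carries the cross-correlations. In the paper the intervals $J_i$ are further subdivided into $J_{i,k}$ at each inductive step, keeping the labelling consistent across stages so that the limit $(\tilde N_1,\tilde N_2,\ldots)$ is well defined.
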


\begin{proof}
We only consider the case of a joining of a countably infinite family, the case of a finite family being covered by the same proof up to obvious changes in notations. 
Let $\left(\Omega,\F ,\mathbb{P},S\right):=\left(X^{*\mathbb{N}},\left({\A}^{*}\right)^{\otimes\mathbb{N}},m,\left(T_{*}\right)^{\otimes\mathbb{N}}\right)$
be an ergodic joining
of the countable family of Poisson suspensions $\left(X^{*},{\A}^{*},(\alpha_i\mu)^{*},T_{*}\right)$, $i\in\mathbb{N}$.
For each $j\ge 1$, we define on this space the Poisson $T$-point processes of intensity $\alpha_j\mu$
\[
N_j\left(\nu_{1},\nu_2,\ldots\right) \egdef \nu_{j}.
\]

Let $\mathcal{N}$ be the Poisson point process with intensity $\mu\otimes\lambda$
on $X\times\mathbb{R}_{+}$. Observe first that, if $J\subset\RR_+$ is an interval of length $\beta$, the random measure $\mathcal{N}\left(\cdot\times J\right)$ is a Poisson $T$-point process of intensity
$\beta\mu$ (here the underlying transformation is $(T\times \Id)_*$). Moreover, if we take disjoint subintervals of $\RR_+$, the corresponding Poisson $T$-point processes obtained in this way are independent.  In particular, $\tilde N_1\egdef \mathcal{N}\left(\cdot\times [0,\alpha_1)\right)$ has the same distribution as $N_1$. Observe also that $\tilde N_1$ can be written as $\pi_*(\mathcal{N}|_{Z_1})$, where $Z_1\egdef X\times [0,\alpha_1)$ and $\pi$ is the projection on $X$.

Set $M_1\egdef N_1$, and $\tilde M_1\egdef \tilde N_1$. Now assume that, for some $n\ge1$, we have found a finite or countable family $(M_i)_{i\in I}$ of independent Poisson $T$-point processes, measurable with respect to $(N_1,\ldots,N_n)$, such that for $1\le j\le n$,
\begin{equation}
\label{eq:decomposition_of_Nj}
  N_j = \sum_{i\in I_j} M_i\circ S^{k(i,j)},
\end{equation}
where $I_j\subset I$ and $k(i,j)\in\ZZ$. Let $\beta_i\ge0$ be such that $M_i$ has intensity $\beta_i \mu$. Assume also that we have a family $(J_i)_{i\in I}$ of disjoint subintervals of $\RR_+$ of respective length $\beta_i$. Then, the family of $T$-point processes $\tilde M_i\egdef \mathcal{N}\left(\cdot\times J_i\right)$  has the same distribution as $(M_i)_{i\in I}$, and the formula
\[
  \tilde N_j \egdef \sum_{i\in I_j} \tilde M_i\circ ((T\times \Id)_*)^{k(i,j)}
\]
yields a family $(\tilde N_1,\ldots, \tilde N_n)$ of $T$-point processes defined on \[\left(\left(X\times\mathbb{R}_{+}\right)^{*},\left({\A}\otimes\mathcal{B}\right)^{*},\left(\mu\otimes\lambda\right)^{*},\left(T\times \Id\right)_{*}\right),\]
which has the same distribution as $(N_1,\ldots,N_n)$.  Moreover, each $\tilde N_j$, $1\le j\le n$ can be written as $(\pi_j)_*(\mathcal{N}|_{Z_j})$, where $Z_j\egdef X\times \left(\bigcup_{i\in I_j} J_i\right)$, and $\pi_j$ is $T^{k(i,j)}\circ\pi$ on $X\times J_i$.

By Lemma~\ref{lemma:multiple Poisson} applied to the collection $(M_i)$ and $N_{n+1}$,
we obtain a countable family of independent
Poisson $T$-point processes $\left\{ M_{i,k}\right\} _{i\in I, k\in\mathbb{Z}\cup\{\infty\}}$ and 
$N_{n+1,\infty}$, such that for each $i\in I$, 
\[
  M_i = \sum_{k\in \ZZ\cup\{\infty\}} M_{i,k}, 
\]
and 
\[
  N_{n+1} = N_{n+1,\infty} + \sum_{i\in I}\sum_{k\in \ZZ} M_{i,k} \circ S^k.
\]
In particular, any $N_j$, $1\le j\le n+1$ can be reconstructed from the family $M_{i,k}$ and $N_{n+1,\infty}$ with a formula similar to~\eqref{eq:decomposition_of_Nj}. 
Each $M_{i,k}$ has intensity $\beta_{i,k}\mu$ for some $\beta_{i,k}\ge0$, and 
\[
  \beta_i = \sum_{k\in \ZZ\cup\{\infty\}} \beta_{i,k}. 
\]
We can therefore partition $J_i$ into disjoint subintervals $J_{i,k}$ of respective length $\beta_{i,k}$. 
Let $\beta\ge 0$ be such that $N_{n+1,\infty}$ has intensity $\beta\mu$. We can then find an extra subinterval of $\RR_+$, disjoint from $\bigcup_{i\in I}J_i$, of length $\beta$.  From this family of disjoint subintervals of $\RR_+$ we can construct a family of independent Poisson $T$-point processes $\left(\tilde M_{i,k}\right)$ and $\tilde N_{n+1,\infty}$. Then, setting
\[
  \tilde N_{n+1} \egdef \tilde N_{n+1,\infty} + \sum_{i\in I}\sum_{k\in \ZZ} \tilde M_{i,k} \circ  ((T\times \Id)_*)^k,
\]
 we get a family $(\tilde N_1,\ldots, \tilde N_n, \tilde N_{n+1})$ of $T$-point processes  which has the same distribution as $(N_1,\ldots,N_n, N_{n+1})$.
 
 By induction we get a family $(\tilde N_1,\tilde N_2,\ldots)$, defined on $$\left(\left(X\times\mathbb{R}_{+}\right)^{*},\left({\A}\otimes\mathcal{B}\right)^{*},\left(\mu\otimes\lambda\right)^{*},\left(T\times \Id\right)_{*}\right),$$ which has the same distribution as $(N_1,N_2\ldots)$. Moreover, each $\tilde N_j$ can be written as $(\pi_j)_*(\mathcal{N}|_{Z_j})$, where $Z_j$ is a $T\times\Id$-invariant subset of $X\times\RR_+$, and $\pi_j:Z_j\to X$ satisfies the requirements of Definition~\ref{def:PoissonJoining}. This ends the proof of the theorem.
 \end{proof}
 
\subsection{The \PaP\ property}
\label{sec:PaP}

In \cite{PaP}, the natural question of the existence of Poisson suspensions
with Poisson joinings as only ergodic self-joinings was addressed. This
lead to the following definition:
\begin{definition}
A Poisson suspension whose all ergodic self-joinings of order $n$
(resp. countable ergodic self-joinings) are Poisson is said to be \PaP$(n)$
(resp. \PaP$\left(\infty\right)$ (from the French
``Poisson à autocouplages Poissons''). \PaP$\left(2\right)$
will be shortened as \PaP.
\end{definition}
This notion is inspired by, and therefore closely related to, the
so-called \emph{GAG} property for Gaussian stationary processes (see \cite{LemParThou00Gausselfjoin}). Indeed
GAG Gaussian stationary processes are the processes whose ergodic
self-joinings remain Gaussian.

We first present some consequences of the \PaP\ property.

\begin{prop}
A Poisson suspension with the \PaP$\left(\infty\right)$ property has the so-called \emph{PID property} (\textit{i.e.} for any $n$, any self-joining of order $n$ with pairwise independent coordinates is the product measure).
\end{prop}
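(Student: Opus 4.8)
The plan is to combine the \PaP$(\infty)$ hypothesis with an ergodic-decomposition argument, the crux being the extremality of an ergodic product measure in the simplex of invariant measures. Recall that the Poisson suspension $(X^*,\A^*,\mu^*,T_*)$ is weakly mixing (our standing assumptions on $(X,\A,\mu,T)$ force it), so every finite power $(\mu^*)^{\otimes k}$ is ergodic; this I use freely. The first ingredient I would establish is the following elementary fact: \emph{any Poisson joining of an at most countable family of copies of $(X^*,\A^*,\mu^*,T_*)$ whose coordinates are pairwise independent is the product joining}. In the notation of Definition~\ref{def:PoissonJoining} — a universal Poisson suspension $(Z^*,\Z^*,m^*,R_*)$, $R$-invariant sets $Z_i\subset Z$, maps $\pi_i\colon Z_i\to X$ with $(\pi_i)_*(m|_{Z_i})=\mu$, and $N_i\egdef(\pi_i)_*(N|_{Z_i})$ for $N$ a Poisson $R$-point process of law $m^*$ — one has $\mathrm{Cov}(N_i(A),N_j(B))=m(\pi_i^{-1}A\cap\pi_j^{-1}B)$, so pairwise independence of the $N_i$ forces $m(Z_i\cap Z_j)=0$ for all $i\ne j$. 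As there are at most countably many pairs, removing the $m$-null set $\bigcup_{i\ne j}(Z_i\cap Z_j)$ makes the $Z_i$ pairwise disjoint without changing the $N_i$; since the restrictions of a Poisson process to pairwise disjoint sets are jointly independent and each $N_i$ has law $\mu^*$, the family $(N_i)$ is independent, i.e.\ the joining is the product joining. (This step uses only the definition of a Poisson process, not~\eqref{P1}--\eqref{P2}.)

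Next, let $\mathcal J$ be a self-joining of order $n$ with pairwise independent coordinates, and put $\widehat{\mathcal J}\egdef\mathcal J\otimes(\mu^*)^{\otimes\NN}$; this is a self-joining of $(X^*,\A^*,\mu^*,T_*)$ of countably infinite order which still has pairwise independent coordinates. I would take its ergodic decomposition $\widehat{\mathcal J}=\int\widehat{\mathcal J}_c\,d\nu(c)$. For each pair $(a,b)$ of coordinates, the $(a,b)$-marginal of $\widehat{\mathcal J}$ is $\mu^*\otimes\mu^*$ by pairwise independence, and this measure is ergodic; an ergodic measure cannot be a non-trivial mixture of invariant measures, so the $(a,b)$-marginal of $\widehat{\mathcal J}_c$ equals $\mu^*\otimes\mu^*$ for $\nu$-almost every $c$. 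Intersecting the corresponding conull sets over the (countably many) pairs, we get that for $\nu$-almost every $c$ the joining $\widehat{\mathcal J}_c$ is ergodic and has pairwise independent coordinates.

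Finally, \PaP$(\infty)$ ensures that each such $\widehat{\mathcal J}_c$ is a Poisson joining, hence by the first step $\widehat{\mathcal J}_c=(\mu^*)^{\otimes\NN}$; integrating over $c$ gives $\widehat{\mathcal J}=(\mu^*)^{\otimes\NN}$, and restricting to the first $n$ coordinates yields $\mathcal J=(\mu^*)^{\otimes n}$, which is the PID property. The point I expect to require the most care is the passage to ergodic components: one must be sure they inherit pairwise independence, which is precisely where the extremality of the ergodic measure $\mu^*\otimes\mu^*$ (equivalently, uniqueness of its ergodic decomposition) is used; the remaining steps are either the explicit structure of Poisson joinings or a direct invocation of the hypothesis.
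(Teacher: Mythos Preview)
Your argument is correct and matches the paper's: both hinge on the observation that in a Poisson joining with pairwise-independent coordinates the sets $Z_i$ of Definition~\ref{def:PoissonJoining} are pairwise $m$-disjoint, after which the independence of restrictions of a Poisson process to disjoint sets gives the product structure. You are simply more explicit than the paper's two-line proof, spelling out the reduction to ergodic components (which the paper leaves implicit) and padding to a countable self-joining so that \PaP$(\infty)$ applies literally; the latter detour can be dropped once one notes that \PaP$(\infty)$ implies \PaP$(n)$ for every finite $n$, since restricting a Poisson joining to a subfamily of coordinates is again Poisson.
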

\begin{proof}
Consider an $n$-order self-joining of a \PaP$\left(\infty\right)$ suspension with pairwise independence. With the notation of Definition \ref{def:PoissonJoining}, we obtain that any two coordinates of the self-joining are associated to pairwise disjoint sets $Z_i \subset Z$. But by elementary properties of Poisson point processes recalled in Definition \ref{def:Poisson}, we obtain global independence.
\end{proof}

Recall that the \emph{centralizer} of an invertible measure preserving transformation $S$ is the set, denoted by $C(S)$, of all  invertible measure preserving transformations on the same space which commute with $S$.

\begin{prop}\label{prop:centralizer}
Let $\left(X^{*},\mathcal{A}^{*},\mu^{*},T_{*}\right)$ be a \PaP\ 
suspension and let $R\in C\left(T_{*}\right)$. Then there exists
$S\in C\left(T\right)$ such that $R=S_{*}$. In particular $C\left(T_{*}\right)\simeq C\left(T\right)$.\end{prop}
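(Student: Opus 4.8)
The plan is to encode $R$ by the graph self-joining it produces on the Poisson suspension, to force this joining to be a \emph{Poisson} joining via the \PaP\ hypothesis, and then to read the transformation $S$ off the sub-Markov operator underlying it. So first I would form $\Delta_R\egdef(\Id\times R)_*\mu^*$. Since $R$ commutes with $T_*$ and preserves $\mu^*$, $\Delta_R$ is a self-joining of order~$2$ of $(X^*,\A^*,\mu^*,T_*)$, and $\xi\mapsto(\xi,R\xi)$ is an isomorphism from $(X^*,\mu^*,T_*)$ onto $(X^*\times X^*,\Delta_R,T_*\times T_*)$, so $\Delta_R$ is ergodic. By the \PaP\ property it is a Poisson joining, hence by Proposition~\ref{prop:inf-divis} its associated Markov operator equals the exponential $\widetilde\varphi$ of some sub-Markov operator $\varphi\colon L^2(\mu)\to L^2(\mu)$. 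On the other hand, a direct computation with the convention $m(A\times B)=\int_B\varphi(\ind A)\,d\rho_2$ shows that the Markov operator of a graph joining is a Koopman operator: here it is $U_{R^{-1}}$, which is \emph{unitary}. Thus $\widetilde\varphi=U_{R^{-1}}$ is unitary.

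Next I would descend this unitarity to the base and recover $S$. The exponential $\widetilde\varphi$ respects the chaos decomposition $L^2(\mu^*)=\overline{\bigoplus_{n\ge0}H^n}$, mapping each $H^n\simeq L^2(\mu)^{\odot n}$ into itself and acting on $H^1\simeq L^2(\mu)$ as $\varphi$. A unitary operator that preserves each summand of an orthogonal decomposition is unitary on each summand (surjectivity of $\widetilde\varphi|_{H^1}$ follows by splitting a preimage along the $H^m$'s), so $\varphi$ is a unitary operator on $L^2(\mu)$, and it is positive, being sub-Markov. A standard argument then applies: for $A\in\A_f$ one has $0\le\varphi(\ind A)\le 1$ a.e.\ and $\int\varphi(\ind A)\,d\mu\le\mu(A)$ (sub-Markov property), while $\|\varphi(\ind A)\|_2^2=\|\ind A\|_2^2=\mu(A)$ by the isometry property; comparing these forces $\varphi(\ind A)=\varphi(\ind A)^2$ $\mu$-a.e., so $\varphi(\ind A)$ is the indicator of a set of measure $\mu(A)$. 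Hence $\varphi$ induces a measure-preserving, invertible (by surjectivity) Boolean automorphism of $(X,\A,\mu)$, which by the standard point realization on a standard $\sigma$-finite space is $U_S$ for an invertible $\mu$-preserving transformation $S$ of $X$.

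It then remains to put things together. Since $R$ commutes with $T_*$, $\widetilde\varphi=U_{R^{-1}}$ commutes with $U_{T_*}=\widetilde{U_T}$; using $\widetilde{\psi_1}\widetilde{\psi_2}=\widetilde{\psi_1\psi_2}$ and that the exponential is determined by its restriction to $H^1$, this gives $\varphi U_T=U_T\varphi$, i.e.\ $U_SU_T=U_TU_S$, i.e.\ $ST=TS$ $\mu$-a.e., so $S\in C(T)$. Applying the exponential, $\widetilde\varphi=\widetilde{U_S}=U_{S_*}$, so $U_{R^{-1}}=U_{S_*}$; since the Koopman operator determines the transformation on the standard space $(X^*,\A^*,\mu^*)$, we get $R^{-1}=S_*$, i.e.\ $R=(S^{-1})_*$ with $S^{-1}\in C(T)$. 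Finally, $S\mapsto S_*$ is a group homomorphism $C(T)\to C(T_*)$ (well defined since $S_*T_*=(ST)_*=(TS)_*=T_*S_*$), and it is injective: if $S_*=S'_*$ then, for the canonical process $N=\Id$ on $(X^*,\A^*,\mu^*)$, one has $N(S^{-1}A)=N({S'}^{-1}A)$ $\mu^*$-a.s.\ for every $A\in\A_f$, forcing $\mu(S^{-1}A\triangle {S'}^{-1}A)=0$, hence $S=S'$ $\mu$-a.e. Since we have just shown this homomorphism is onto, it is an isomorphism $C(T_*)\simeq C(T)$.

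The main obstacle, I expect, is the passage in the middle step: one must check that ``$\varphi$ unitary'' actually upgrades ``sub-Markov'' to ``Markov'' (so that the positive-isometry argument is legitimate), and that the classical ``unitary Markov operator $=$ Koopman operator'' principle goes through in the infinite-measure setting, where constants are not integrable and every step has to be localized to sets in $\A_f$.
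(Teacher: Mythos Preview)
Your proof is correct and follows essentially the same route as the paper: form the graph joining of $R$, apply the \PaP\ property to see it is Poisson, use Proposition~\ref{prop:inf-divis} to write its Markov operator as an exponential $\widetilde\varphi$, then exploit that $\widetilde\varphi$ is a Koopman isometry to force $\varphi$ to come from some $S\in C(T)$. The paper compresses all of this into a few lines and simply asserts that an isometric sub-Markov operator is induced by a measure-preserving map; you have just spelled out the details (descending unitarity along the chaos decomposition, the indicator-to-indicator argument, and the verification that $S\mapsto S_*$ is a group isomorphism), and your closing caveat about the infinite-measure subtleties is exactly the point the paper is taking for granted.
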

\begin{proof}
$R$ induces an ergodic self-joining of the suspension which is Poisson
thanks to the \PaP\  property. Therefore the associated
Markov operator has the form $\widetilde{\varphi}$ for some sub-Markov
operator $\varphi$ on $L^{2}\left(\mu\right)$ that commutes with
$T$. But as $\widetilde{\varphi}$ is an isometry, $\varphi$ is
also an isometry and is therefore induced by a measure preserving map
$S$ of $\left(X,\mathcal{A},\mu\right)$ that commutes with $T$,
i.e. an element of $C\left(T\right)$. By identification, $R=S_{*}$.
\end{proof}

The next proposition is very similar to Theorem~2 in~\cite{ParreauRoy}. 

\begin{prop}\label{prop:subPoissonFactor}
Let $\left(X^{*},\mathcal{A}^{*},\mu^{*},T_{*}\right)$ be a \PaP\ 
suspension and $\mathcal{K}\subset\mathcal{A}^{*}$ a non-trivial
factor. Then there exists a non-trivial Poisson factor contained in
$\mathcal{K}$, that is, there exists a $T$-invariant set of positive
measure $Y\subset X$ and a $T$-invariant $\sigma$-finite $\sigma$-algebra
$\mathcal{C}\subset\mathcal{A}_{\mid Y}$ such that $\mathcal{C}^{*}\subset\mathcal{K}$.
\end{prop}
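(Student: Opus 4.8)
The plan is to study the \emph{relatively independent self-joining} $\lambda$ of $\left(X^{*},\A^{*},\mu^{*},T_{*}\right)$ over the factor $\mathcal K$, i.e.\ the $T_{*}\times T_{*}$-invariant probability on $X^{*}\times X^{*}$ determined by $\int f\otimes g\,d\lambda=\int\EE[f\mid\mathcal K]\,\EE[g\mid\mathcal K]\,d\mu^{*}$. The Markov operator attached to this self-joining is exactly the orthogonal projection $P\egdef\EE[\,\cdot\mid\mathcal K]$ on $L^{2}(\mu^{*})$. The joining $\lambda$ need not be ergodic, but $\mu^{*}$ is ergodic, so $\rho$-almost every ergodic component $\lambda_{e}$ in the ergodic decomposition $\lambda=\int_{E}\lambda_{e}\,d\rho(e)$ still has both marginals equal to $\mu^{*}$, hence is an ergodic self-joining of order $2$ of the Poisson suspension. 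By the \PaP\ property each such $\lambda_{e}$ is a Poisson joining, so by Proposition~\ref{prop:inf-divis} its Markov operator is $\widetilde{\varphi_{e}}$ for some sub-Markov operator $\varphi_{e}$ on $L^{2}(\mu)$. Passing to the mixture, $P=\int_{E}\widetilde{\varphi_{e}}\,d\rho(e)$ in the weak operator sense.

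Each $\widetilde{\varphi_{e}}$ preserves every Poissonian chaos $H^{n}$, acting there, via the identification $H^{n}\simeq L^{2}(\mu)^{\odot n}$, as the symmetric tensor power $\varphi_{e}^{\odot n}$; hence so does $P$, so that $H^{n}$ reduces the self-adjoint projection $P$ and $P|_{H^{n}}=\int_{E}\varphi_{e}^{\odot n}\,d\rho(e)$. In particular $\psi\egdef P|_{H^{1}}$ is, via $H^{1}\simeq L^{2}(\mu)$, a self-adjoint idempotent on $L^{2}(\mu)$ equal to $\int_{E}\varphi_{e}\,d\rho(e)$; being a weak integral of positivity-preserving contractions it is itself a positivity-preserving contraction, and it commutes with $U_{T}$ because $P$ commutes with $U_{T_{*}}=\widetilde{U_{T}}$, which restricts to $U_{T}$ on $H^{1}$. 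A classical structure result for positive self-adjoint idempotents on the $L^{2}$ space of a $\sigma$-finite measure space (identifying such an operator, on the support of its range, with a conditional expectation) then provides a measurable set $Y\subset X$ and a $\sigma$-finite sub-$\sigma$-algebra $\mathcal C\subset\A_{\mid Y}$ such that $V\egdef\psi\bigl(L^{2}(\mu)\bigr)=L^{2}(Y,\mathcal C,\mu)$; the commutation with $U_{T}$ and the uniqueness in this description force $Y$ and $\mathcal C$ to be $T$-invariant. This $\mathcal C$ is the candidate: recall $L^{2}(\mathcal C^{*})=\text{Fock}(V)=\bigoplus_{n\ge0}V^{\odot n}$ inside $L^{2}(\mu^{*})$, and $V\neq\{0\}$ is precisely what will make $\mu(Y)>0$ and the factor non-trivial.

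It then remains to show $\mathcal C^{*}\subset\mathcal K$, that is, $P$ is the identity on $\text{Fock}(V)$. Fix $v\in V$. Then $\psi v=v$, i.e.\ $\int_{E}\varphi_{e}v\,d\rho(e)=v$, while $\|\varphi_{e}v\|\le\|v\|$ for every $e$. From $\|v\|=\bigl\|\int_{E}\varphi_{e}v\,d\rho(e)\bigr\|\le\int_{E}\|\varphi_{e}v\|\,d\rho(e)\le\|v\|$ we first get $\|\varphi_{e}v\|=\|v\|$ for $\rho$-a.e.\ $e$; expanding $\bigl\|\int_{E}\varphi_{e}v\,d\rho(e)\bigr\|^{2}$ and applying Cauchy--Schwarz in $L^{2}(\mu)$ then gives $\langle\varphi_{e}v,\varphi_{e'}v\rangle=\|v\|^{2}$ for $(\rho\otimes\rho)$-a.e.\ $(e,e')$, so by strict convexity $\varphi_{e}v$ equals, for $\rho$-a.e.\ $e$, a single vector, necessarily $v$. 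Separability of $V$ yields one $\rho$-full set $E_{0}$ with $\varphi_{e}v=v$ for all $v\in V$, $e\in E_{0}$; hence $\varphi_{e}^{\odot n}$ fixes $V^{\odot n}$ on $E_{0}$, so $P|_{H^{n}}=\int_{E}\varphi_{e}^{\odot n}\,d\rho(e)$ fixes $V^{\odot n}$, and $P$ is the identity on $\text{Fock}(V)=L^{2}(\mathcal C^{*})$. Thus $L^{2}(\mathcal C^{*})\subset L^{2}(\mathcal K)$, i.e.\ $\mathcal C^{*}\subset\mathcal K$.

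Two points need extra care. One is the structure lemma for positive self-adjoint idempotents on $L^{2}$ of a $\sigma$-finite space, which is classical but should be stated precisely. The other, which I expect to be the genuine obstacle, is the non-triviality of the factor, i.e.\ $V\neq\{0\}$ (equivalently $\mu(Y)>0$): this amounts to showing that a non-trivial $T_{*}$-invariant sub-$\sigma$-algebra $\mathcal K$ cannot be orthogonal to the first chaos $H^{1}$. I would obtain this by a separate argument in the spirit of the Gaussian analogue of this statement (Theorem~2 in~\cite{ParreauRoy}): assuming $L^{2}(\mathcal K)\cap H^{1}=\{0\}$, pick the smallest $n\ge2$ with $L^{2}(\mathcal K)\cap H^{n}\neq\{0\}$ and a non-zero $w$ in it; the strict-convexity argument gives $\varphi_{e}^{\odot n}w=w$ for $\rho$-a.e.\ $e$, and the Poissonian product formula — which expresses elements of $H^{1}$ in terms of products of $\mathcal K$-measurable functions built from $w$ — should force a non-zero first-chaos component inside $L^{2}(\mathcal K)$, a contradiction.
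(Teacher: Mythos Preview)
Your overall approach coincides with the paper's: form the relatively independent self-joining over $\mathcal{K}$, decompose its Markov operator $P=\EE[\,\cdot\mid\mathcal K]$ as $P=\int_E \widetilde{\varphi_e}\,d\rho(e)$ via the \PaP\ property, restrict to the first chaos to obtain a positive self-adjoint idempotent $\psi$ on $L^2(\mu)$, and identify $\psi$ with a conditional expectation onto some $\mathcal{C}\subset\mathcal{A}_{\mid Y}$ (the paper quotes Proposition~1 in~\cite{ParreauRoy} for this structure step). Your argument that $P$ fixes all of $\text{Fock}(V)$ --- strict convexity giving $\varphi_e v=v$ for $v\in V$, then passing to symmetric tensor powers --- is correct, and is a slightly more explicit variant of what the paper does: the paper only checks that $P$ fixes $V$ inside $H^1$, then observes that the random variables $N(A)-\mu(A)$, $A\in\mathcal{C}$ of finite measure, are thus $\mathcal K$-measurable, and that they generate $\mathcal{C}^*$.

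The gap you yourself flag, the non-triviality $V\neq\{0\}$, is the only real issue --- but it is not the obstacle you think it is, and your proposed workaround (find the least $n\ge2$ with $L^2(\mathcal{K})\cap H^n\neq\{0\}$ and try to manufacture a first-chaos vector from a ``Poissonian product formula'') is both vague and unnecessary. The paper disposes of this in two lines. Suppose $\psi=P|_{H^1}=0$, i.e.\ $\int_E\varphi_e\,d\rho(e)=0$. Each $\varphi_e$ is sub-Markov, hence sends non-negative functions to non-negative functions; integrating a family of non-negative functions to zero forces $\rho$-almost all of them to vanish, so $\varphi_e=0$ for $\rho$-a.e.\ $e$. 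But then $\widetilde{\varphi_e}$ is the projection onto constants for $\rho$-a.e.\ $e$, hence so is $P=\int_E\widetilde{\varphi_e}\,d\rho(e)$, which means $\mathcal{K}$ is trivial, a contradiction. This is precisely the positivity of the $\varphi_e$ doing the work --- a feature you even record (``a weak integral of positivity-preserving contractions'') but do not exploit.
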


\begin{proof}
Let $\Phi$ be the conditional expectation corresponding to $\mathcal{K}$, which is the Markov operator
associated to the relatively independent self-joining over $\mathcal{K}$.
The ergodic decomposition of this joining allows one to write 
$\Phi$  as an integral of indecomposable operators corresponding to ergodic self-joinings. 
By the \PaP\ property, these operators are 
of exponential form, \textit{i.e.} we have: 
\[
\Phi=\int_{\mathcal{W}}\widetilde{\Psi}_{w}\, \rho\left(dw\right)
\]
for some probability space $\left(\mathcal{W},\mathcal{B},\rho\right)$.
As each $\widetilde{\Psi}_{w}$ preserves the first chaos, so does
$\Phi$. Moreover, if $\Phi$ vanishes on the first chaos, so does $\widetilde{\Psi}_{w}$
for $\rho$-almost every $w\in\mathcal{W}$. But the only Markov operator
of exponential form that vanishes on the first chaos is the projection
on the constants. This means that $\Phi$ is also this projection, which corresponds
to the conditional expectation on the trivial factor, yielding a contradiction.
Thus $\Phi$ does not vanish on the first chaos and we can apply Proposition~$1$ in~\cite{ParreauRoy}: $\Phi$ induces on $L^2(\mu)$ a sub-Markov operator $\varphi$, and there exists a $T$-invariant set $Y\subset X$ such that $\varphi$ restricted to $L^2(\mu|_Y)$ is a conditional expectation on a $\sigma$-finite factor $\mathcal{C}\subset \mathcal{A}|_Y$, and $\varphi$ vanishes on $L^2(\mu|_{Y^c})$. Coming back to $L^2(\mu^*)$, $\Phi$ coincides with the exponential operator $\widetilde\varphi$ on the first chaos. Therefore its image contains all the vectors of the form $N(A)-\mu(A)$, $A\in\mathcal{C}\cap\A_f$. These vectors are therefore $\mathcal{K}$-measurable, thus $\mathcal{C}^*\subset \mathcal{K}$.
\end{proof}
 
 \begin{corollary}\label{corollary:prime}
Let $\left(X^*,\mathcal{A}^{*},\mu^{*},T_{*}\right)$ be a \PaP\ 
suspension. If $T$ is ergodic and has no non-trivial factor, then
$T_{*}$ is prime.
\end{corollary}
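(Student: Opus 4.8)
The plan is to read primeness of $T_*$ directly off Proposition~\ref{prop:subPoissonFactor}, the hypotheses on $T$ serving only to collapse the Poisson factor it produces down to the whole system. So I would start from an arbitrary non-trivial factor $\mathcal{K}\subset\mathcal{A}^*$ of $\left(X^*,\mathcal{A}^*,\mu^*,T_*\right)$, with the aim of showing $\mathcal{K}=\mathcal{A}^*$ modulo $\mu^*$-null sets. Since the suspension is \PaP\ by assumption, Proposition~\ref{prop:subPoissonFactor} applies and yields a $T$-invariant set $Y\subset X$ of positive measure together with a $T$-invariant $\sigma$-finite sub-$\sigma$-algebra $\mathcal{C}\subset\mathcal{A}_{\mid Y}$, producing a non-trivial Poisson factor $\mathcal{C}^*\subset\mathcal{K}$.

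The next step is to eliminate $Y$ and then $\mathcal{C}$. Because $T$ is ergodic and $Y$ is $T$-invariant of positive measure, $Y=X$ up to a $\mu$-null set, so $\mathcal{C}$ may be viewed as a $T$-invariant $\sigma$-finite sub-$\sigma$-algebra of $\mathcal{A}$, i.e.\ a factor of $(X,\mathcal{A},\mu,T)$ in the $\sigma$-finite sense. This factor is non-trivial: being $\sigma$-finite on the infinite-measure space $X$, $\mathcal{C}$ carries a countable partition of $X$ into finite-measure pieces, at least one of which has positive measure, and such a set is neither null nor co-null, so $\mathcal{C}\neq\{\emptyset,X\}$ modulo $\mu$ (equivalently, if $\mathcal{C}$ were $\mu$-trivial its Poisson suspension $\mathcal{C}^*$ would be the one-point system, contradicting Proposition~\ref{prop:subPoissonFactor}). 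By hypothesis $T$ has no non-trivial factor, hence $\mathcal{C}=\mathcal{A}$ modulo $\mu$, so $\mathcal{C}^*=\mathcal{A}^*$ modulo $\mu^*$. From $\mathcal{C}^*\subset\mathcal{K}\subset\mathcal{A}^*$ we conclude $\mathcal{K}=\mathcal{A}^*$, which is absurd since $\mathcal{K}$ was assumed non-trivial. Therefore $\left(X^*,\mathcal{A}^*,\mu^*,T_*\right)$ has no non-trivial factor; as $\mu$ is infinite this system is itself non-trivial, so $T_*$ is prime.

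I do not expect any genuine obstacle here: the substantive work has already been packaged into Proposition~\ref{prop:subPoissonFactor}, and what remains is bookkeeping. The only points requiring a word of care are the interpretation of ``non-trivial factor'' of an infinite-measure-preserving system (namely a $T$-invariant $\sigma$-finite sub-$\sigma$-algebra strictly between $\{\emptyset,X\}$ and $\mathcal{A}$) and the elementary equivalence between $\mathcal{C}$ being $\mu$-trivial and its Poisson factor $\mathcal{C}^*$ being trivial, which follows from ergodicity (a non-$\mu$-trivial $T$-invariant $\sigma$-finite algebra must separate points on a set of positive finite measure).
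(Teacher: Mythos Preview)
Your argument is correct and is precisely the one the paper intends: the corollary is stated without proof as an immediate consequence of Proposition~\ref{prop:subPoissonFactor}, and you have simply spelled out the obvious deduction (ergodicity forces $Y=X$, absence of non-trivial factors forces $\mathcal{C}=\mathcal{A}$, hence $\mathcal{C}^*=\mathcal{A}^*\subset\mathcal{K}$). The only blemish is a harmless wording slip at the end: having announced the aim of showing $\mathcal{K}=\mathcal{A}^*$, you then call this conclusion ``absurd''---just drop that clause and conclude directly that every non-trivial factor equals $\mathcal{A}^*$, so $T_*$ is prime.
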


\begin{prop}\label{Poissonfactors}
Let $\left(X^{*},\mathcal{A}^{*},\mu^{*},T_{*}\right)$ be a $\mathcal{P}a\mathcal{P}$
suspension and $\mathcal{K}\subset\mathcal{A}^{*}$ a factor, then
$T_{*}$ is relatively weakly mixing over $\mathcal{K}$ if and only
if $\mathcal{K}$ is a Poisson factor.\end{prop}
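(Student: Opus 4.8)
The strategy is to study, in both directions, the relatively independent self-joining $\widehat\mu\egdef\mu^{*}\otimes_{\mathcal{K}}\mu^{*}$ of the suspension over $\mathcal{K}$. Since the Poisson suspension is ergodic, all its factors are ergodic, so $T_{*}$ is relatively weakly mixing over $\mathcal{K}$ if and only if $\widehat\mu$ is ergodic; this is the characterization I will use. Recall that the Markov operator of $\widehat\mu$ is the conditional expectation $\mathbb{E}[\,\cdot\mid\mathcal{K}]$, a self-adjoint projection of $L^{2}(\mu^{*})$ commuting with $U_{T_{*}}=\widetilde{U_{T}}$.

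Assume first that $\mathcal{K}=\mathcal{C}^{*}$ is a Poisson factor. By ergodicity of $(X,\mathcal{A},\mu,T)$ and $\mu(X)=\infty$, the $T$-invariant set $Y$ in the definition is null (then $\mathcal{K}$ is trivial and relative weak mixing over $\mathcal{K}$ is just the weak mixing of $T_{*}$, valid by~\cite{Roy2007}) or co-null, so we may take $Y=X$. Let $\bar p\colon X\to W$ generate $\mathcal{C}$, set $\nu\egdef\bar p_{*}\mu$ and write $\mu=\int_{W}\mu_{w}\,d\nu(w)$ for the disintegration into probability measures carried by the fibers of $\bar p$; then $(W,\nu,\bar T)$ is an ergodic factor of $(X,\mathcal{A},\mu,T)$ with $\nu(W)=\infty$. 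Next I would introduce the $\sigma$-finite system
\[
  (Z,m,R)\egdef\Bigl(\{(x,x')\in X^{2}:\bar p(x)=\bar p(x')\},\ \int_{W}\mu_{w}\otimes\mu_{w}\,d\nu(w),\ (T\times T)|_{Z}\Bigr),
\]
and check that $\widehat\mu$ is realised as a factor of the Poisson suspension $(Z^{*},m^{*},R_{*})$ through the map $\zeta\mapsto\bigl((\pi_{1})_{*}\zeta,(\pi_{2})_{*}\zeta\bigr)$, where $\pi_{1},\pi_{2}$ are the two coordinate projections: for a Poisson process of intensity $m$ on $Z$, the two images are Poisson processes of intensity $\mu$, their $\mathcal{C}^{*}$-factors coincide (both equal the image under $\bar p$), and, conditionally on that common factor, they are independent. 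It remains to show that $(Z^{*},m^{*},R_{*})$ is ergodic, which by~\cite{Roy2007} reduces to showing that $R$ has no invariant set of positive finite $m$-measure; this follows by pushing such a set through the equivariant map $q\colon(x,x')\mapsto\bar p(x)$, which sends $m$ onto $\nu$, and observing that a finite $\bar T$-invariant measure dominated by the infinite ergodic measure $\nu$ must vanish. Being a factor of an ergodic system, $\widehat\mu$ is ergodic, so $T_{*}$ is relatively weakly mixing over $\mathcal{K}$.

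Conversely, assume $T_{*}$ relatively weakly mixing over $\mathcal{K}$, so $\widehat\mu$ is an ergodic self-joining of order $2$. By the~\PaP\ property it is a Poisson joining, hence by Proposition~\ref{prop:inf-divis} its Markov operator $\mathbb{E}[\,\cdot\mid\mathcal{K}]$ is the exponential $\widetilde\varphi$ of a sub-Markov operator $\varphi$ on $L^{2}(\mu)$. Restricting to the first chaos $H^{1}\simeq L^{2}(\mu)$, on which $\widetilde\varphi$ acts as $\varphi$, and using that $\mathbb{E}[\,\cdot\mid\mathcal{K}]$ is a self-adjoint projection commuting with $U_{T_{*}}=\widetilde{U_{T}}$, we obtain that $\varphi$ is a self-adjoint idempotent contraction on $L^{2}(\mu)$ commuting with $U_{T}$. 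Then, exactly as in the proof of Proposition~\ref{prop:subPoissonFactor}, Proposition~$1$ of~\cite{ParreauRoy} shows that $\varphi$ is, on $L^{2}(\mu|_{Y})$ for some $T$-invariant $Y$, the conditional expectation onto a $T$-invariant $\sigma$-finite sub-$\sigma$-algebra $\mathcal{C}\subset\mathcal{A}|_{Y}$, and vanishes on $L^{2}(\mu|_{Y^{c}})$; by ergodicity $Y=X$ (up to the null case, which gives $\mathcal{K}$ trivial). Hence $\widetilde\varphi=\mathbb{E}[\,\cdot\mid\mathcal{C}^{*}]$, so $\mathbb{E}[\,\cdot\mid\mathcal{K}]$ and $\mathbb{E}[\,\cdot\mid\mathcal{C}^{*}]$ are the same orthogonal projection, whence $\mathcal{K}=\mathcal{C}^{*}$ is a Poisson factor.

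The main obstacle lies in the first implication: the faithful realisation of $\widehat\mu$ as a factor of the Poisson suspension over the fibered product $(Z,m,R)$ (the bookkeeping with superpositions, thinnings and markings of Poisson processes), together with the ergodicity of that suspension. The second implication is essentially a reorganisation of ingredients already at hand: ergodicity of $\widehat\mu$ from relative weak mixing, the~\PaP\ property, Proposition~\ref{prop:inf-divis}, and the operator-theoretic result of~\cite{ParreauRoy}.
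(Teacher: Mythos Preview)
Your proof is correct, but it is considerably more explicit than the paper's. The paper's proof is essentially two citations: the forward implication (Poisson factor $\Rightarrow$ relatively weakly mixing) is a general fact about Poisson suspensions, not using the \PaP\ property, and is simply attributed to~\cite{Roy07Infinite}; for the converse, once the relatively independent joining is seen to be ergodic and hence Poisson (by~\PaP), the paper invokes Proposition~4.7 of~\cite{Roy07Infinite} to conclude that $\mathcal{K}$ is Poisson.

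Your converse is exactly the argument behind that citation, unpacked with the tools already present in the paper (Proposition~\ref{prop:inf-divis} and the operator-theoretic identification from Proposition~1 of~\cite{ParreauRoy}, as in the proof of Proposition~\ref{prop:subPoissonFactor}). The step $\widetilde\varphi=\mathbb{E}[\,\cdot\mid\mathcal{C}^{*}]$ deserves one more word: it follows because the exponential of the conditional expectation onto $\mathcal{C}$ is the Markov operator of the relatively independent joining over $\mathcal{C}^{*}$, which is a Poisson joining; since the exponential map $\varphi\mapsto\widetilde\varphi$ is injective, the identification is immediate.

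For the forward implication, your fibered-product construction is the right picture and is essentially what lies behind the reference to~\cite{Roy07Infinite}. The disintegration $\mu=\int_{W}\mu_{w}\,d\nu(w)$ into probability measures is actually unproblematic here, precisely because $\mathcal{C}$ is assumed $\sigma$-finite: choose a strictly positive $\mathcal{C}$-measurable density $g$ with $\int g\,d\mu=1$, disintegrate the probability $g\mu$ over its pushforward, and observe that $g$ is constant on fibers, so the normalising factors cancel. Thus the ``main obstacle'' you flag is lighter than it looks. What your explicit approach buys is self-containment; what the paper's approach buys is brevity and the clean observation that this direction holds for \emph{any} Poisson suspension, not just \PaP\ ones.
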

\begin{proof}
The fact that $T_{*}$ is relatively weakly mixing over $\mathcal{K}$
if it is a Poisson factor was remarked in \cite{Roy07Infinite}. To prove the converse,
assume $T_{*}$ is relatively weakly mixing over $\mathcal{K}$, this
means that the relatively independent joining over $\mathcal{K}$
is ergodic and therefore a Poisson joining thanks to the $\mathcal{P}a\mathcal{P}$
property. Then $\mathcal{K}$ is Poisson thanks to Proposition 4.7
in \cite{Roy07Infinite}.\end{proof}

The notion of semi-simplicity was introduced in \cite{semisimple}:

\begin{definition}
The probability preserving dynamical system $\left(\Omega,\mathcal{F},\mathbb{P},S\right)$ is \emph{semi-simple}
if any ergodic self-joining $\left(\Omega\times\Omega,\mathcal{F}\otimes\mathcal{F},m,S\times S\right)$,
is a relatively weakly mixing extension over $\left(\Omega,\mathcal{F},\mathbb{P},S\right)$
through the projection map.
\end{definition}
As an easy consequence of the preceding result, we get:
\begin{corollary}
A \PaP\ suspension is semi-simple.
\end{corollary}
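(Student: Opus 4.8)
The plan is to feed an ergodic self-joining into the \PaP\ property and then, using the explicit structure of Poisson joinings recalled after Proposition~\ref{prop:inf-divis}, realize it as a factor of an \emph{ergodic} Poisson suspension in which the first coordinate sits as a Poisson factor; one then quotes the fact --- already used in the proof of Proposition~\ref{Poissonfactors} --- that a Poisson suspension is relatively weakly mixing over each of its Poisson factors. Concretely, I would start from an ergodic self-joining $m$ of the \PaP\ suspension $\left(X^{*},\mathcal{A}^{*},\mu^{*},T_{*}\right)$; by the \PaP\ property it is a Poisson joining, hence of the form recalled after Proposition~\ref{prop:inf-divis}: it is built from two $T$-invariant measures $\gamma_{1},\gamma_{2}\le\mu$, a sub-joining $m_{0}$ of $\left(X,\mathcal{A},\gamma_{1},T\right)$ and $\left(X,\mathcal{A},\gamma_{2},T\right)$, and two auxiliary independent Poisson processes of intensities $\mu-\gamma_{1}$ and $\mu-\gamma_{2}$, the joining system $\left(X^{*}\times X^{*},m,T_{*}\times T_{*}\right)$ being a factor of the direct product of the Poisson suspensions over $\left(X,\mu-\gamma_{1},T\right)$, over $\left(X\times X,m_{0},T\times T\right)$, and over $\left(X,\mu-\gamma_{2},T\right)$. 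This direct product is itself the Poisson suspension $\left(Z^{*},\mathcal{Z}^{*},\rho^{*},R_{*}\right)$ over the disjoint union $Z=X\sqcup(X\times X)\sqcup X$, with $\rho=(\mu-\gamma_{1})\oplus m_{0}\oplus(\mu-\gamma_{2})$ and $R$ acting by $T$, $T\times T$, $T$ on the three pieces; the factor map $\Phi$ onto the joining keeps the points of the two extreme pieces on their respective axes and sends a point $(x,x')$ of the middle piece to $x$ on the first axis and to $x'$ on the second.

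Next I would observe that composing $\Phi$ with the projection on the first coordinate is the map $\xi\mapsto\pi_{*}\left(\xi|_{Z_{1}}\right)$, where $Z_{1}\egdef X\sqcup(X\times X)$ is the $R$-invariant union of the first two pieces and $\pi:Z_{1}\to X$ is the identity on the first $X$ and the first-coordinate projection on $X\times X$. Since $\gamma_{1}$ is $T$-invariant and $\le\mu$, ergodicity of $\left(X,\mathcal{A},\mu,T\right)$ forces $\gamma_{1}=c_{1}\mu$ with $c_{1}\in[0,1]$, and the first marginal of $m_{0}$ is $\gamma_{1}$; hence $\pi_{*}\left(\rho|_{Z_{1}}\right)=(\mu-\gamma_{1})+\gamma_{1}=\mu$, so this map exhibits $\left(X^{*},\mathcal{A}^{*},\mu^{*},T_{*}\right)$ as a Poisson factor of $\left(Z^{*},\mathcal{Z}^{*},\rho^{*},R_{*}\right)$, associated with the $R$-invariant set $Z_{1}$ and the $R$-invariant $\sigma$-finite $\sigma$-algebra $\pi^{-1}(\mathcal{A})$. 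By the fact recalled in the proof of Proposition~\ref{Poissonfactors} (see~\cite{Roy07Infinite}), $\left(Z^{*},\mathcal{Z}^{*},\rho^{*},R_{*}\right)$ is relatively weakly mixing over $\left(X^{*},\mathcal{A}^{*},\mu^{*},T_{*}\right)$; and since relative weak mixing over a factor is inherited by intermediate factors, the self-joining $\left(X^{*}\times X^{*},m,T_{*}\times T_{*}\right)$, which lies between $\left(Z^{*},\mathcal{Z}^{*},\rho^{*},R_{*}\right)$ and $\left(X^{*},\mathcal{A}^{*},\mu^{*},T_{*}\right)$ through the first projection, is itself a relatively weakly mixing extension of $\left(X^{*},\mathcal{A}^{*},\mu^{*},T_{*}\right)$ through the projection map. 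This is precisely the semi-simplicity of the \PaP\ suspension.

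The point I expect to require the most care is the implicit ergodicity of the ambient system $\left(Z^{*},\mathcal{Z}^{*},\rho^{*},R_{*}\right)$, which the statement on relative weak mixing over Poisson factors presupposes. To settle it I would invoke the ergodicity criterion for Poisson suspensions recalled in the introduction (see~\cite{Roy2007}): $\left(Z^{*},\mathcal{Z}^{*},\rho^{*},R_{*}\right)$ is ergodic provided $(Z,\rho,R)$ carries no $R$-invariant set of finite positive $\rho$-measure. Here $(1-c_{i})\mu$ is either zero or infinite, hence carries no such set; and every ergodic component of $m_{0}$ is an ergodic $T\times T$-invariant measure whose marginals are multiples of $\mu$, so of infinite total mass, hence assigns measure $0$ or $\infty$ to every $T\times T$-invariant set --- and the same then holds for $m_{0}$, and thus for $\rho$. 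So $\left(Z^{*},\mathcal{Z}^{*},\rho^{*},R_{*}\right)$ is ergodic and the argument closes. The remaining ingredients --- that a direct product of Poisson suspensions is the Poisson suspension over the disjoint union of the bases, that a $T$-invariant measure $\le\mu$ is a multiple of $\mu$, and that relative weak mixing over a factor passes to intermediate factors --- are routine.
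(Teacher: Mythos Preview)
Your proof is correct and follows exactly the route the paper has in mind: the paper gives no argument beyond ``as an easy consequence of the preceding result,'' and you have spelled out precisely that consequence---realize the ergodic Poisson self-joining as a factor of an ambient Poisson suspension $(Z^*,\rho^*,R_*)$ in which the first marginal sits as a Poisson factor, then invoke relative weak mixing over Poisson factors (the easy direction of Proposition~\ref{Poissonfactors}, from~\cite{Roy07Infinite}) and pass to the intermediate factor. Your care in checking the ergodicity of $(Z^*,\rho^*,R_*)$ is appropriate and correct; note only that the detour through ergodic components of $m_0$ can be shortened: for any $T\times T$-invariant $A$, the first marginal of $m_0|_A$ is $T$-invariant and $\le c_1\mu$, hence equals $d\mu$ for some $d\ge 0$, so $m_0(A)=d\,\mu(X)\in\{0,\infty\}$.
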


\begin{prop}
\label{prop:PoissonVariables}
Let $\left(X^{*},\mathcal{A}^{*},\mu^{*},T_{*}\right)$ be a $\mathcal{P}a\mathcal{P}$
suspension and a factor $\mathcal{K}$ generated by random variables
of the form $N\left(A_{i}\right)$, $A_{i}\in\mathcal{A}_{f}$, $i\in I$.
Then $\mathcal{K}$ is a Poisson factor.\end{prop}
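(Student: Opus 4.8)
The plan is to reuse the machinery from the proof of Proposition~\ref{prop:subPoissonFactor} to analyse $\mathcal{K}$ on the first chaos, and then to exploit the extra hypothesis that $\mathcal{K}$ is \emph{generated} by first-chaos variables in order to upgrade the inclusion ``$\mathcal{K}$ contains a Poisson factor'' into an equality.

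Let $\Phi$ be the conditional expectation onto $\mathcal{K}$, seen as the Markov operator of the relatively independent self-joining over $\mathcal{K}$. For each $i$, the vector $N(A_i)-\mu(A_i)$ lies in the first chaos $H^1\simeq L^2(\mu)$ (where it corresponds to $\ind{A_i}$) and is fixed by $\Phi$, so $\Phi$ does not vanish on $H^1$. Decomposing the self-joining into ergodic components and invoking the \PaP\ property together with Proposition~\ref{prop:inf-divis}, one writes $\Phi=\int_{\mathcal{W}}\widetilde\Psi_w\,\rho(dw)$ with each $\widetilde\Psi_w$ the exponential of a sub-Markov operator on $L^2(\mu)$; in particular $\Phi$ preserves each chaos, and $\psi:=\Phi|_{H^1}$ is a self-adjoint, idempotent, sub-Markov operator on $L^2(\mu)$ commuting with $U_T$. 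Exactly as in the proof of Proposition~\ref{prop:subPoissonFactor}, Proposition~1 of~\cite{ParreauRoy} then provides a $T$-invariant set $Y\subset X$ and a $T$-invariant $\sigma$-finite $\sigma$-algebra $\mathcal{C}\subset\mathcal{A}_{\mid Y}$ such that $\psi$ is the map $f\mapsto\ind{Y}\,\EE[f\mid\mathcal{C}]$; its range is $L^2(\mu_{\mid Y},\mathcal{C})$, and since $\Phi$ is a conditional expectation preserving $H^1$, this range equals $H^1\cap L^2(\mathcal{K})$, the first-chaos part of $\mathcal{K}$.

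I would then read off the two inclusions. Each $\ind{A_i}=\psi(\ind{A_i})$ belongs to $L^2(\mu_{\mid Y},\mathcal{C})$; being an indicator, this forces $A_i\subset Y$ and $A_i\in\mathcal{C}$ (mod~$\mu$). As $Y$ is $T$-invariant, contains $\bigcup_iA_i$, and $\mu(A_1)>0$, ergodicity of $T$ yields $Y=X$; hence every $N(A_i)$ is $\mathcal{C}^*$-measurable and $\mathcal{K}=\sigma(N(A_i):i\in I)\subset\mathcal{C}^*$. Conversely, for any $B\in\mathcal{C}$ with $\mu(B)<\infty$ we have $\ind{B}\in L^2(\mu,\mathcal{C})$, i.e.\ the first-chaos vector $N(B)-\mu(B)$ is $\mathcal{K}$-measurable, so $N(B)$ is $\mathcal{K}$-measurable; therefore $\mathcal{C}^*\subset\mathcal{K}$. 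Combining, $\mathcal{K}=\mathcal{C}^*$ is a Poisson factor.

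The main obstacle is the passage from $\Phi$ to $\psi$: positivity of $\Phi$ as a Markov operator on $L^2(\mu^*)$ is not positivity on $L^2(\mu)$, and it is precisely the exponential structure supplied by the \PaP\ property that transfers it, so that $\psi$ is sub-Markov and hence, being a self-adjoint idempotent, a (restricted) conditional expectation. Once this is in hand, the only new point beyond Proposition~\ref{prop:subPoissonFactor} is the elementary observation that a factor generated by first-chaos variables is completely determined by its first-chaos part.
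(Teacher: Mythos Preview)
Your proof is correct and follows essentially the same route as the paper's: both invoke the first-chaos analysis from Proposition~\ref{prop:subPoissonFactor} to obtain $\mathcal{C}^*\subset\mathcal{K}$, then observe that the generators $N(A_i)-\mu(A_i)$, being fixed by $\Phi$, lie in the range of $\widetilde\varphi$ on $H^1$ and hence are $\mathcal{C}^*$-measurable, yielding the reverse inclusion. Your extra remark that $Y=X$ by ergodicity is correct but unnecessary, since Poisson factors are permitted to live over proper $T$-invariant subsets.
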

\begin{proof}
If $\Phi$ is the conditional expectation corresponding to $\mathcal{K}$,
then as in the proof of Proposition \ref{prop:subPoissonFactor}, it coincides on the first chaos with
$\widetilde{\varphi}$ for some conditional expectation on a $\sigma$-finite
factor $\mathcal{C}\subset\mathcal{A}\mid_{Y}$, where $Y$ is a $T$-invariant
subset of $X$ and $\mathcal{C}^{*}\subset\mathcal{K}$ . Therefore,
the random variables $N\left(A_{i}\right)-\mu\left(A_{i}\right)$
are  in the image of both  $\Phi$ and $\widetilde{\varphi}$, and as
such are $\mathcal{C}^{*}$-measurable. This implies that $\mathcal{K}\subset\mathcal{C}^{*}$
and therefore $\mathcal{K}=\mathcal{C}^{*}$.\end{proof}
\begin{prop}
\label{prop:PaPFactors} A Poisson factor of a \PaP\ suspension is also
\PaP.
\end{prop}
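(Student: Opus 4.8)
The plan is to lift an ergodic self-joining of the Poisson factor to a self-joining of the ambient \PaP\ suspension, apply the \PaP\ property there, and then push the resulting Poisson joining back down to the factor, using that the Poisson factor map is additive for the addition of measures.

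Concretely, write the Poisson factor as $\left(Y^*,\mathcal{C}^*,\mu^*,T_*\right)$, with factor map $\pi\colon \xi\in X^*\mapsto \xi|_{\mathcal{C}}\in Y^*$, and let $\lambda$ be an ergodic self-joining of order $2$ of this factor. I would form the relatively independent joining $\widehat\lambda$ of $\left(X^*,\A^*,\mu^*,T_*\right)$ with itself over the common factor $\left(Y^*,\mathcal{C}^*,\mu^*,T_*\right)$, using $\pi$ on each coordinate; this is a self-joining of order $2$ of $\left(X^*,\A^*,\mu^*,T_*\right)$, and by construction $(\pi\times\pi)_*\widehat\lambda=\lambda$. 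Decomposing $\widehat\lambda$ into ergodic components, $\widehat\lambda=\int_{\mathcal{W}}\widehat\lambda_w\,\rho(dw)$, I note that since $\mu^*$ is ergodic, hence extremal among $T_*$-invariant probability measures, the two marginals of $\rho$-almost every $\widehat\lambda_w$ are again $\mu^*$. Thus $\widehat\lambda_w$ is an ergodic self-joining of order $2$ of $\left(X^*,\A^*,\mu^*,T_*\right)$, and the \PaP\ property forces it to be a Poisson joining.

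It then remains to transfer this back along $\pi$. Viewed as a map of measure spaces $\widetilde X\to\widetilde Y$, the factor map $\xi\mapsto\xi|_{\mathcal{C}}$ is additive for the addition of $\sigma$-finite measures, so $(\xi_1,\xi_2)\mapsto(\xi_1|_{\mathcal{C}},\xi_2|_{\mathcal{C}})$ is additive from $\widetilde{(X\sqcup X)}$ to $\widetilde{(Y\sqcup Y)}$. An additive map sends a convolution to a convolution, hence an infinitely divisible distribution to an infinitely divisible one. Since $\widehat\lambda_w$ is a Poisson joining it is infinitely divisible by Proposition~\ref{prop:inf-divis}, so $(\pi\times\pi)_*\widehat\lambda_w$ is infinitely divisible as well; being a joining of the two Poisson suspensions $\left(Y^*,\mathcal{C}^*,\mu^*,T_*\right)$, it is again a Poisson joining by Proposition~\ref{prop:inf-divis}. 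Finally, $\lambda=(\pi\times\pi)_*\widehat\lambda=\int_{\mathcal{W}}(\pi\times\pi)_*\widehat\lambda_w\,\rho(dw)$ exhibits the ergodic measure $\lambda$ as an average of $T_*\times T_*$-invariant probability measures; by extremality of $\lambda$, $(\pi\times\pi)_*\widehat\lambda_w=\lambda$ for $\rho$-almost every $w$, so $\lambda$ is a Poisson joining, as desired. The same argument applies verbatim to self-joinings of any finite order and to countable self-joinings, so a Poisson factor of a \PaP$(n)$ (resp. \PaP$(\infty)$) suspension is again \PaP$(n)$ (resp. \PaP$(\infty)$).

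The only genuinely delicate point is the bookkeeping around the ergodic decomposition: one must check both that $\rho$-almost every ergodic component $\widehat\lambda_w$ still has $\mu^*$ as its marginals (so that it really is a self-joining to which \PaP\ applies) and that pushing these components down recovers $\lambda$ exactly and not merely on average — both following from the extremality of ergodic invariant probability measures. Everything else is either essentially definitional (stability of Poisson joinings under the additive Poisson factor map) or a direct appeal to Proposition~\ref{prop:inf-divis}.
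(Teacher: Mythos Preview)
Your proof is correct and follows the same overall architecture as the paper's: lift the ergodic self-joining of the factor to a self-joining of the ambient suspension, pass to an ergodic component, invoke \PaP, and push back down. The difference lies in the final step. The paper concludes via Definition~\ref{def:PoissonJoining}: the Poisson self-joining of $X^*$ is realised inside some suspension $(Z^*,\mathcal{Z}^*,\rho^*,R_*)$ with both copies of $X^*$ appearing as Poisson factors, and since a Poisson factor of a Poisson factor is again a Poisson factor, the two copies of $Y^*$ are Poisson factors of $Z^*$, which makes their joint law a Poisson joining directly from the definition. You instead exploit the additivity of the restriction map $\xi\mapsto\xi|_{\mathcal{C}}$ to transport infinite divisibility, and then appeal to the equivalence in Proposition~\ref{prop:inf-divis}. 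Both routes are short; the paper's is marginally more conceptual (it never leaves the structural description of Poisson joinings), while yours is more explicit about the ergodic-decomposition bookkeeping, which the paper leaves implicit in the phrase ``can be embedded into an ergodic self-joining of the latter''.
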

\begin{proof}
Let $\left(Y^{*},\mathcal{B}^{*},\nu^{*},S_{*}\right)$ be a Poisson
factor of a $\mathcal{P}a\mathcal{P}$ suspension $\left(X^{*},\mathcal{A}^{*},\mu^{*},T_{*}\right)$.
An ergodic self-joining of the former can be embedded into an ergodic
self-joining of the latter, which is Poisson by hypothesis. Therefore, by definition of Poisson joinings, we get
another suspension $(Z^*,\mathcal{Z}^{*},\rho^*,R_*)$ in which the two copies of $X^*$ are seen as Poisson factors.
Since Poisson factors of Poisson factors are still Poisson factors, the two copies of $Y^*$ are Poisson factors of $Z^*$, and we get the result.
\end{proof}
 
\begin{theo}\label{theo:PaPandPrime}
Assume that properties~\eqref{P1} and~\eqref{P2} hold for $T$. Then the Poisson suspension $\left(X^{*},{\A}^{*},\mu^{*},T_{*}\right)$ is \PaP $\left(\infty\right)$, prime, mildly mixing and its centralizer is reduced to the powers of $T_{*}$.
\end{theo}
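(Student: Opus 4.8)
The plan is to read off the four properties from the structural results already established, the only genuinely new input being for mild mixing. First, \PaP$(\infty)$ is immediate from Theorem~\ref{theo:Any-ergodic-countable} applied with $\alpha_i=1$ for every $i$: a countable ergodic self-joining of $\left(X^{*},{\A}^{*},\mu^{*},T_{*}\right)$ is precisely an ergodic joining of a countable family of copies of it, hence a Poisson joining. In particular the suspension satisfies \PaP$(2)$, i.e. it is \PaP, so Propositions~\ref{prop:centralizer} and~\ref{prop:subPoissonFactor} and Corollary~\ref{corollary:prime} all apply.

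Primeness and the computation of the centralizer both rest on the same use of~\eqref{P2}: an \emph{ergodic} $T^{\times2}$-invariant, boundedly finite measure $\sigma$ on $X^2$ with marginals absolutely continuous with respect to $\mu$ is either the product $\mu\otimes\mu$ or a shifted diagonal $\Delta_{T^k}$, and the product case is excluded whenever $\sigma(A\times A)\le\mu(A)$ for all $A\in{\A}_{f}$, since $\mu(A)<\mu(A)^2$ for $\mu(A)$ large and $\mu$ is unbounded. For the centralizer, by Proposition~\ref{prop:centralizer} every $R\in C(T_{*})$ is $S_{*}$ for some $S\in C(T)$, so it suffices to show $C(T)=\{T^{k}:k\in\ZZ\}$; given $S\in C(T)$, the graph measure $\Delta_S$ (the pushforward of $\mu$ by $x\mapsto(x,Sx)$) is ergodic, $T^{\times2}$-invariant, boundedly finite, with marginals $\mu$ and $\Delta_S(A\times A)=\mu(A\cap S^{-1}A)\le\mu(A)$, so by the above $\Delta_S=\Delta_{T^k}$ for some $k$, which forces $S=T^k$; as $T$ is aperiodic these powers are distinct, so $C(T_{*})=\{T_{*}^{k}:k\in\ZZ\}$. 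For primeness, by Corollary~\ref{corollary:prime} it is enough to see that $T$ has no non-trivial factor. Given a $T$-invariant sub-$\sigma$-algebra $\mathcal{C}\subset{\A}$ with $(X,\mathcal{C},\mu,T)$ non-trivial, the relatively independent self-joining $\lambda$ over $\mathcal{C}$ is a genuine joining — $T^{\times2}$-invariant, boundedly finite, with marginals $\mu$ and $\lambda(A\times A)\le\mu(A)$ — so by the above $\lambda=\sum_{k}c_k\Delta_{T^k}$ with $c_k\ge0$, $\sum_k c_k=1$; hence its Markov operator $\Phi=\EE[\,\cdot\mid\mathcal{C}]$ equals $\sum_k c_kU_T^{\,k}$, which acts as multiplication by $\widehat c(\theta):=\sum_k c_ke^{ik\theta}$ in the spectral representation of $U_T$. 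Since $T$ is ergodic with $\mu(X)=\infty$, $U_T$ has no eigenvalue, so its maximal spectral type is atomless; and $\Phi^2=\Phi$ forces $\widehat c(\theta)\in\{0,1\}$ a.e.\ for that type, while $\{\widehat c=1\}$ is a closed subgroup of $\TT$. A proper such subgroup being finite, $\widehat c$ would vanish off a finite set, hence a.e.\ for the atomless spectral type, i.e. $\Phi=0$ — impossible for a (non-degenerate) conditional expectation. Therefore $\{\widehat c=1\}=\TT$, $c=\delta_0$, $\Phi=\Id$, $\mathcal{C}={\A}$, and $T$, hence $T_{*}$, is prime.

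Finally, for mild mixing one shows $T_{*}$ has no non-trivial rigid factor. If $\mathcal{K}$ were one, Proposition~\ref{prop:subPoissonFactor} produces inside it a non-trivial Poisson factor $\mathcal{C}^{*}$ over some $(X,\mathcal{C},\mu,T)$ (the invariant set $Y$ is of full measure, by ergodicity); being a factor of the rigid factor $\mathcal{K}$, the system $\mathcal{C}^{*}$ is itself rigid, so reading on its first chaos $L^2(\mu|_{\mathcal{C}})$ gives $U_T^{\,n_k}\to\Id$ along some $n_k\to\infty$. Since $T$ has no non-trivial factor, $\mathcal{C}={\A}$, so in fact $U_T^{\,n_k}\to\Id$ on all of $L^2(\mu)$, i.e. $T$ is rigid; it remains to exclude this. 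This is the one place where both~\eqref{P1} and~\eqref{P2} are genuinely needed: a conservative ergodic infinite-measure-preserving transformation satisfying them carries no non-zero rigid function in $L^2(\mu)$. I expect establishing this non-rigidity to be the main obstacle — every other step above is a short deduction from the structural results of Sections~\ref{sec:2}--\ref{sec:joinings}, whereas here one has to argue, presumably from the classification of $T^{\times2}$-invariant measures in~\eqref{P2} combined with the strong (total) ergodicity furnished by~\eqref{P1}, that no sparse sequence of powers of $T$ can converge strongly to the identity.
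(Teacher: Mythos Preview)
Your arguments for \PaP$(\infty)$, primeness, and the centralizer are correct and, in fact, more self-contained than the paper's: where the paper simply cites~\cite{chaconinfinite} for the facts that $C(T)=\{T^k:k\in\ZZ\}$ and that $T$ has no non-trivial factor, you reprove these directly from~\eqref{P2} via the graph-joining and relatively-independent-joining constructions, together with a clean spectral argument. That is a genuine gain in exposition.

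The gap is exactly where you flag it: mild mixing. You reduce to showing that $T$ is not rigid and then concede you cannot finish. But this detour is unnecessary, and the missing step is already in your hands. You have established that $T_{*}$ is prime and that $C(T_{*})=\{T_{*}^{k}:k\in\ZZ\}$ is countable. Since $T_{*}$ is prime, any non-trivial factor of $T_{*}$ is $T_{*}$ itself, so mild mixing reduces to showing $T_{*}$ is not rigid. Now invoke the standard fact (see e.g.~\cite{King86Comweak}) that a rigid probability-preserving transformation has an uncountable centralizer: if $U_{T_{*}}^{\,n_k}\to\Id$ strongly, then every weak-operator limit point of $\{T_{*}^{\,n}:n\in\ZZ\}$ lies in $C(T_{*})$, and rigidity forces this closure to be an uncountable Polish group. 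This contradicts $\#C(T_{*})=\aleph_0$, so $T_{*}$ is not rigid, hence mildly mixing. This is precisely the paper's route, and it bypasses entirely the question of whether $T$ itself is rigid.
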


\begin{proof}

By Theorem~\ref{theo:Any-ergodic-countable}, the suspension is \PaP $\left(\infty\right)$.
Primeness comes from Corollary \ref{corollary:prime}, and the fact that properties~\eqref{P1} and~\eqref{P2} imply that $T$ has no non-trivial factor (see~\cite{chaconinfinite}, Section~4). The triviality of the centralizer of $T$ follows also from
properties~\eqref{P1} and~\eqref{P2} (see again~\cite{chaconinfinite}, Section~4), then applying  Proposition \ref{prop:centralizer}, we get that $T_*$ commutes only with its powers.
At last, a transformation is mildly mixing if it has no non-trivial rigid factor, which in the situation of a prime transformation reduces to the property that $T_*$ is not rigid. This is the case, since a rigid transformation has an uncountable centralizer (see \textit{e.g.}~\cite{King86Comweak}).
\end{proof}

We know that properties~\eqref{P1} and~\eqref{P2} imply the triviality of the centralizer of $T$. Hence, for $n\ge2$,  $T^{n}$ never satisfies~\eqref{P1} and~\eqref{P2}, even if $T$ does. We can nevertheless obtain the \PaP\ property for $T^n_*$ when $T$ satisfies properties~\eqref{P1} and~\eqref{P2}. This is a direct application of a lemma we borrow from \cite{LemParThou00Gausselfjoin}.
\begin{lemma}
\label{lem:Ergodic joinings}Let $R$ and $S$ be two commuting ergodic
automorphisms of the probability space $\left(\Omega,\mathcal{F},\mathbb{P}\right)$.
Let $J_{2}^{e}\left(R\right)$ (resp. $J_{2}^{e}\left(S\right)$)
be the set of ergodic self-joinings of $R$ (resp. $S$) and let $F=\overline{\left\langle R\right\rangle }$
(resp. $G=\overline{\left\langle R,S\right\rangle }$) be the closure
of the group generated by $R$ (resp. $R$ and $S$) inside $\text{Aut}\left(\Omega,\mathcal{F},\mathbb{P}\right)$.
If $G/F$ is compact and $J_{2}^{e}\left(S\right)\subset J_{2}^{e}\left(R\right)$,
then $J_{2}^{e}\left(S\right)=J_{2}^{e}\left(R\right)$.\end{lemma}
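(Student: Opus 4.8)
The plan is to establish the missing inclusion $J_2^e(R)\subset J_2^e(S)$ by an averaging argument over the compact group $K:=G/F$; combined with the hypothesis $J_2^e(S)\subset J_2^e(R)$ this yields the equality. Fix $\lambda\in J_2^e(R)$, that is, an $(R\times R)$-ergodic probability measure on $(\Omega\times\Omega,\mathcal F\otimes\mathcal F)$ both of whose marginals equal $\mathbb P$. Since $G=\overline{\langle R,S\rangle}$ is abelian, every $g\in G$ commutes with both $R$ and $S$ and preserves $\mathbb P$; hence for each $g$ the measure $(g\times g)_*\lambda$ is again $(R\times R)$-invariant, is $(R\times R)$-ergodic (invariant sets correspond under the commuting conjugation by $g\times g$), and still has marginals $\mathbb P$. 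In other words $(g\times g)_*\lambda\in J_2^e(R)$ for every $g\in G$, and likewise $(g\times g)^{-1}_*$ maps $J_2^e(S)$ into $J_2^e(S)$, since $(g\times g)^{-1}$ commutes with $S\times S$ and preserves the marginals.

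First I would set
\[
  \bar\lambda:=\int_K (g\times g)_*\lambda\ dm_K(gF),
\]
where $m_K$ is the Haar probability measure on $K=G/F$, the integrand depending measurably (indeed continuously) on $g$. For this to make sense I must check that $g\mapsto (g\times g)_*\lambda$ is constant on cosets of $F$, i.e. that $(f\times f)_*\lambda=\lambda$ for every $f\in F=\overline{\langle R\rangle}$. This follows by writing $f$ as a limit of powers $R^{n_k}$ in $\mathrm{Aut}(\Omega,\mathbb P)$ and checking on rectangles that $(R^{n_k}\times R^{n_k})_*\lambda(A\times B)=\lambda(R^{-n_k}A\times R^{-n_k}B)\to\lambda(f^{-1}A\times f^{-1}B)$, using that both marginals of $\lambda$ are $\mathbb P$; since $(R^{n_k}\times R^{n_k})_*\lambda=\lambda$ identically, the limit is $\lambda$. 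Then $\bar\lambda$ is $(G\times G)$-invariant: for $h\in G$, $(h\times h)_*\bar\lambda=\int_K (hg\times hg)_*\lambda\ dm_K(gF)=\bar\lambda$ by translation invariance of $m_K$ (again using that $G$ is abelian, so $hgF=(hg)F$). In particular $\bar\lambda$ is $(S\times S)$-invariant, and being an average of joinings of $(\mathbb P,\mathbb P)$ it has marginals $\mathbb P$, so it is a (not necessarily ergodic) self-joining of $S$.

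The key step then compares two ergodic decompositions of $\bar\lambda$. On one hand, the formula above exhibits $\bar\lambda$ as a barycentre of the $(R\times R)$-ergodic measures $(g\times g)_*\lambda$, so by uniqueness of the ergodic decomposition the pushforward of $m_K$ under $gF\mapsto(g\times g)_*\lambda$ is exactly the measure governing the $(R\times R)$-ergodic decomposition of $\bar\lambda$. On the other hand, write the $(S\times S)$-ergodic decomposition $\bar\lambda=\int_W\lambda_w\,d\rho(w)$; since $\mathbb P$ is $S$-ergodic, $\rho$-almost every $\lambda_w$ has marginals $\mathbb P$, so $\lambda_w\in J_2^e(S)\subset J_2^e(R)$, and in particular each $\lambda_w$ is also $(R\times R)$-ergodic. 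Hence $\int_W\lambda_w\,d\rho$ is another barycentric representation of $\bar\lambda$ over $(R\times R)$-ergodic measures, and uniqueness forces the two to coincide; therefore $(g\times g)_*\lambda\in\{\lambda_w:w\in W\}\subset J_2^e(S)$ for $m_K$-almost every $g$. Picking any such $g$, the conclusion $\lambda=(g\times g)^{-1}_*\bigl((g\times g)_*\lambda\bigr)\in J_2^e(S)$ follows because $(g\times g)^{-1}_*$ preserves $J_2^e(S)$, as noted above.

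I expect the main obstacle to lie in the first step: making the average $\bar\lambda$ well defined rests on the continuity of the $\mathrm{Aut}(\Omega,\mathbb P)$-action on the space of self-joinings — concretely on the fact that $F\times F$ fixes $\lambda$ even though $F$ is merely the closure of $\langle R\rangle$ — and then on the careful matching of the two ergodic decompositions of $\bar\lambda$ via their uniqueness. Compactness of $G/F$ is used exactly once, but crucially: it is what guarantees the existence of the averaging probability measure $m_K$.
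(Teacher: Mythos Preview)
The paper does not give its own proof of this lemma; it is quoted without proof from \cite{LemParThou00Gausselfjoin}. Your argument---averaging $\lambda$ over the compact abelian quotient $K=G/F$ to produce an $(S\times S)$-invariant joining $\bar\lambda$, then identifying its $(R\times R)$- and $(S\times S)$-ergodic decompositions via the hypothesis $J_2^e(S)\subset J_2^e(R)$ and the uniqueness of ergodic decomposition---is correct and is precisely the standard proof from that reference; the only place to be slightly more careful in phrasing is that the two decompositions coincide as \emph{measures} on the simplex of $(R\times R)$-invariant probabilities, which gives $(g\times g)_*\lambda\in J_2^e(S)$ for $m_K$-a.e.\ $g$ (rather than literal membership in the set $\{\lambda_w\}$).
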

\begin{corollary}
\label{cor:powers}If $\left(X,\mathcal{A},\mu,T\right)$ satisfies
properties~\eqref{P1} and~\eqref{P2} then for any $n\ge 2$, $T_{*}^{n}$
has the same ergodic self-joinings as $T_{*}$. In particular, $T_{*}^{n}$ is \PaP.
\end{corollary}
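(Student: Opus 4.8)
The plan is to apply Lemma~\ref{lem:Ergodic joinings} with $R:=T_*^n$ and $S:=T_*$, both regarded as automorphisms of $\left(X^{*},\mathcal{A}^{*},\mu^{*}\right)$. They commute, and both are ergodic: $T_*$ is weakly mixing (indeed mildly mixing, by Theorem~\ref{theo:PaPandPrime}), so every power $T_*^n$ is ergodic. Setting $F:=\overline{\langle T_*^n\rangle}$ and $G:=\overline{\langle T_*^n,T_*\rangle}=\overline{\langle T_*\rangle}$, the coset decomposition $\langle T_*\rangle=\bigcup_{j=0}^{n-1}T_*^{j}\langle T_*^n\rangle$ gives, after taking closures (a finite union of closed cosets is closed), $G=\bigcup_{j=0}^{n-1}T_*^{j}F$; hence $[G:F]\le n$ and $G/F$ is finite, in particular compact. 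It therefore remains only to check the inclusion $J_2^e(T_*)\subseteq J_2^e(T_*^n)$, after which the lemma yields $J_2^e(T_*^n)=J_2^e(T_*)$.

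For this inclusion, let $m\in J_2^e(T_*)$, that is, an ergodic self-joining of $T_*$. It is automatically $T_*^n\times T_*^n$-invariant with both marginals equal to $\mu^*$, so the only point to verify is that $\left(X^{*}\times X^{*},m,T_*^n\times T_*^n\right)$ is ergodic. Here I would use that, by Theorem~\ref{theo:PaPandPrime}, $T_*$ is \PaP, hence semi-simple; thus $\left(X^{*}\times X^{*},m,T_*\times T_*\right)$ is a relatively weakly mixing extension of $\left(X^{*},\mathcal{A}^{*},\mu^{*},T_*\right)$ through a coordinate projection. A relatively weakly mixing extension of a weakly mixing system is again weakly mixing (by the Furstenberg--Zimmer structure theory, a non-trivial Kronecker factor of the extension, joined with the base, would give a non-trivial isometric intermediate extension of the weakly mixing $T_*$, which is impossible). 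Hence $\left(X^{*}\times X^{*},m,T_*\times T_*\right)$ is weakly mixing, so all of its powers — in particular $T_*^n\times T_*^n$ — are ergodic, i.e.\ $m\in J_2^e(T_*^n)$. As an alternative, one can argue that since $m$ is a Poisson joining it is a factor of a Poisson suspension over a conservative $\sigma$-finite base carrying no invariant set of finite positive measure; the Koopman operator of such a base has purely continuous spectrum, so the Poisson suspension, and therefore its factor $m$, is weakly mixing.

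The lemma then gives $J_2^e(T_*^n)=J_2^e(T_*)$. To conclude that $T_*^n$ is \PaP, note that by the \PaP\ property of $T_*$ every element of $J_2^e(T_*)$ is a Poisson joining of $T_*$, and any Poisson joining of $T_*$ is also a Poisson joining of $T_*^n$: the intertwining maps $\pi_i$ of Definition~\ref{def:PoissonJoining} satisfying $\pi_i\circ R=T\circ\pi_i$ also satisfy $\pi_i\circ R^n=T^n\circ\pi_i$, which exhibits the joining as a factor of the Poisson suspension over $\left(X,\mathcal{A},\mu,T^n\right)$, namely $T_*^n$. Hence every ergodic self-joining of $T_*^n$ is a Poisson joining, which is exactly the \PaP\ property for $T_*^n$.

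I expect the only real obstacle to be the inclusion $J_2^e(T_*)\subseteq J_2^e(T_*^n)$: one must know that ergodic self-joinings of $T_*$ stay ergodic when the product transformation is replaced by its $n$-th power, and the cleanest route to this goes through the semi-simplicity of \PaP\ suspensions together with the fact that relatively weakly mixing extensions of weakly mixing systems are weakly mixing. The verification of the hypotheses of Lemma~\ref{lem:Ergodic joinings} and the final identification of Poisson joinings of $T_*$ with those of $T_*^n$ are routine.
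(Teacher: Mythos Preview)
Your proof is correct and follows the same strategy as the paper: apply Lemma~\ref{lem:Ergodic joinings} with $R=T_*^n$ and $S=T_*$, check finiteness of $G/F$, and verify $J_2^e(T_*)\subseteq J_2^e(T_*^n)$. The only minor difference is in this last step: the paper argues directly that an ergodic self-joining of $T_*$, being Poisson by the \PaP\ property, is in particular a Poisson self-joining of the ergodic suspension $T_*^n$, and a Poisson self-joining of an ergodic suspension is automatically ergodic --- which is exactly your alternative argument; your primary route via semi-simplicity and the Furstenberg--Zimmer machinery is valid but heavier than needed.
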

\begin{proof}
First observe that $T_{*}^{n}$ is an ergodic Poisson suspension. Indeed,  
$T^{n}$ is a conservative infinite measure
preserving automorphism of $\left(X,\mathcal{A},\mu\right)$ without
$T^n$-invariant set of finite measure: otherwise, if $A$ satisfied $T^{n}A=A$
with $\mu\left(A\right)<+\infty$, then $\bigcup_{k=1}^{n}T^{k}A$
would be a $T$-invariant set of finite measure, which is impossible.

As $C\left(T_*\right)$ is reduced to the powers of $T_*$, $\overline{\left\langle T_{*}^{n}\right\rangle }=\left\{ T_{*}^{kn},\,k\in\mathbb{Z}\right\} $ and $\overline{\left\langle T_{*},T_{*}^{n}\right\rangle }=\left\{ T_{*}^{k},\,k\in\mathbb{Z}\right\} $, hence the quotient is finite. 

To apply Lemma~\ref{lem:Ergodic joinings}, it only remains to check that $J_2^e(T_*)\subset J_2^e(T_*^n)$. Of course, an ergodic self-joining
of $T_{*}$ is a self-joining of $T_{*}^{n}$, but we have to prove it is ergodic. 
By the \PaP\ property for $T_*$, an ergodic self-joining of $T_*$ is Poisson, and thus it is a Poisson self-joining of $T_*^n$.
But a Poisson self-joining of an ergodic suspension is itself ergodic, and therefore we have the
desired inclusion $J_{2}^{e}\left(T_{*}\right)\subset J_{2}^{e}\left(T_{*}^{n}\right)$.
\end{proof}

\begin{prop}
\label{prop:BigMama}
Assume that properties~\eqref{P1} and~\eqref{P2} hold for $T$. Then the Poisson suspension
$$\left(\left(X\times\mathbb{R}_{+}\right)^{*},\left({\A}\otimes\mathcal{B}\right)^{*},\left(\mu\otimes\lambda\right)^{*},\left(T\times \Id \right)_{*}\right)$$
is \PaP.
\end{prop}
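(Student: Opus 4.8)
The plan is to reduce the claim to Theorem~\ref{theo:Any-ergodic-countable} by recognizing the system $\bigl((X\times\RR_+)^*,(\A\otimes\B)^*,(\mu\otimes\lambda)^*,(T\times\Id)_*\bigr)$ as a Poisson suspension whose base is, up to isomorphism of the joining structure, a countable direct sum of copies of the bases $(X,\A,\alpha\mu,T)$. More precisely, I would first observe that $\RR_+$ can be partitioned into a countable family of intervals $(J_i)_{i\ge1}$ of finite lengths $\alpha_i>0$, so that the Poisson point process $\N$ of intensity $\mu\otimes\lambda$ decomposes, by elementary properties of Poisson processes (Definition~\ref{def:Poisson}), into the independent superposition of the Poisson $T$-point processes $\N(\cdot\times J_i)$ of respective intensities $\alpha_i\mu$. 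This exhibits $(T\times\Id)_*$ on $(X\times\RR_+)^*$ as (isomorphic to) the direct product $\bigotimes_{i\ge1} T_*^{(\alpha_i)}$, that is, as a particular (product) joining of the countable family $\bigl(X^*,\A^*,(\alpha_i\mu)^*,T_*\bigr)$.

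Next I would take an arbitrary ergodic self-joining $\widehat m$ of this suspension and must show it is Poisson. The key point is that such a self-joining is, via the identification above, an ergodic joining of the doubled countable family $\bigl(X^*,\A^*,(\alpha_i\mu)^*,T_*\bigr)_{i\ge1}$ together with a second copy of the same family — which is again a countable family of Poisson suspensions all of the form $T_*^{(\alpha)}$. By Theorem~\ref{theo:Any-ergodic-countable}, every ergodic joining of such a countable family is a Poisson joining, obtained as a factor of the universal suspension $\bigl((X\times\RR_+)^*,(\A\otimes\B)^*,(\mu\otimes\lambda)^*,(T\times\Id)_*\bigr)$. In particular the self-joining $\widehat m$ of our suspension, viewed this way, is a Poisson joining; and since a Poisson joining of Poisson suspensions that are themselves Poisson factors of a common suspension remains a Poisson joining of the original pair (the two copies of $(X\times\RR_+)^*$ sit as Poisson factors inside the universal $Z^*$, by transitivity of the Poisson-factor relation as used in the proof of Proposition~\ref{prop:PaPFactors}), we conclude that $\widehat m$ is a Poisson self-joining of $\bigl((X\times\RR_+)^*,(\A\otimes\B)^*,(\mu\otimes\lambda)^*,(T\times\Id)_*\bigr)$.

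The main obstacle I anticipate is bookkeeping rather than conceptual: one must carefully check that $T\times\Id$ on $X\times\RR_+$ is covered by the hypotheses of Theorem~\ref{theo:Any-ergodic-countable} in the relevant sense — namely that an ergodic self-joining of the \emph{product} suspension $\bigotimes_i T_*^{(\alpha_i)}$ really does coincide with an ergodic joining of the underlying family of $T_*^{(\alpha)}$'s, so that the point processes $N_j$ extracted in that theorem's proof are exactly the coordinate projections $\N(\cdot\times J_i)$ and their images under the self-joining. One also needs to verify ergodicity of $(T\times\Id)_*$ itself, which follows since $\mu\otimes\lambda$ is infinite and $T\times\Id$ has no invariant set of finite measure (an invariant $A\subset X\times\RR_+$ of finite measure would force, by Fubini, a $T$-invariant set of finite measure in $X$, contradicting ergodicity of $T$ on an infinite measure space). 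Once these identifications are made precise, the conclusion is immediate from Theorem~\ref{theo:Any-ergodic-countable}, so that the Poisson suspension over $X\times\RR_+$ is \PaP.
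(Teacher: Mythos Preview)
Your argument has a genuine gap at the very first step. The map $\N\mapsto\bigl(\N(\cdot\times J_i)\bigr)_{i\ge1}$, for a fixed partition $(J_i)$ of $\RR_+$ into intervals, is \emph{not} an isomorphism from the suspension $\bigl((X\times\RR_+)^*,(\mu\otimes\lambda)^*,(T\times\Id)_*\bigr)$ onto $\bigotimes_{i\ge1}T_*^{(\alpha_i)}$; it is only a factor map. The full Poisson process $\N$ records the exact $t$-coordinate of each point, whereas the family $\bigl(\N(\cdot\times J_i)\bigr)_i$ only remembers in which interval each point falls. Equivalently, the $\sigma$-algebra generated by $\{\N(A\times J_i):A\in\A_f,\ i\ge1\}$ is strictly smaller than $(\A\otimes\B)^*$. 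Consequently, given an ergodic self-joining $\gamma$ of the big suspension, what you obtain from Theorem~\ref{theo:Any-ergodic-countable} is only that the \emph{induced} joining on this proper factor is Poisson; this does not immediately say anything about $\gamma$ itself. The difficulty is not bookkeeping: it is precisely the passage from the discretized factor back to the full continuous-parameter suspension.

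The paper's proof handles this by taking a \emph{sequence} of dyadic discretizations $\pi_n(x,t)=(x,\lfloor 2^nt\rfloor 2^{-n})$, so that the associated factor $\sigma$-algebras increase to all of $(\A\otimes\B)^*$. For each $n$ the pushforward $\gamma_n$ of $\gamma$ is an ergodic countable joining of copies of $T_*^{(2^{-n})}$, hence Poisson by Theorem~\ref{theo:Any-ergodic-countable}, hence infinitely divisible by Proposition~\ref{prop:inf-divis}. One then shows $\gamma_n\to\gamma$ weakly, and concludes via the stability of infinite divisibility under weak limits (Proposition~11.2.II in~\cite{DaleyVereJones}) that $\gamma$ is infinitely divisible, hence Poisson. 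Your idea of slicing $\RR_+$ is exactly the right starting point, but it must be combined with this limiting argument (or with an explicit proof that inverse limits of Poisson joinings along an increasing family of Poisson factors remain Poisson) to close the gap.
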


\begin{proof}
We denote by $\mathcal{N}$ the Poisson process of intensity $\mu\otimes\lambda$ on $X\times \RR_+$. 
We approach this process by a sequence of Poisson processes discretized on the second coordinate.
For each $n\ge1$, we consider the application $\pi_n$ defined on $X\times\RR_+$ by
\[
  \pi_n(x,t)\egdef\left(x, \lfloor 2^n t\rfloor 2^{-n} \right).
\]
Then, we set $\mathcal{N}_n\egdef (\pi_n)_*(\N)$, which is a Poisson process on $X\times\RR_+$, with 
intensity $\mu\otimes\left( \sum_{j\ge0} 2^{-n} \delta_{j2^{-n}}\right)$. It is therefore concentrated on a countable union
of disjoint copies of $X$, which are the sets $X\times \{j2^{-n}\}$, $j\ge0$. 

Now observe that the  following  convergence holds everywhere on $(X\times\RR_+)_*$: for any continuous function $f:X\times\RR_+\to \RR$, vanishing outside a bounded set,
\[
  \int f\, d\N_n \tend{n}{\infty}\int f\, d\N.
\]
By the dominated convergence theorem, 
\[
  \EE\left[ \exp\left(-\int f\, d\N_n\right) \right] \tend{n}{\infty}\EE\left[ \exp\left(-\int f\, d\N\right) \right].
\]
This is enough to prove the weak convergence of the distribution of $\N_n$ to the distribution of $\N$ (see \cite{DaleyVereJones}, Proposition~11.1.VIII).

Now consider an ergodic self-joining $\gamma$ of 
$$\left(\left(X\times\mathbb{R}_{+}\right)^{*},\left({\A}\otimes\mathcal{B}\right)^{*},\left(\mu\otimes\lambda\right)^{*},\left(T\times \Id \right)_{*}\right), $$
and denote by $\N$ and $\overline\N$ the corresponding Poisson processes, with joint distribution $\gamma$. 
We set $\N_n$ as above, and $\overline\N_n\egdef (\pi_n)_*(\overline\N)$. By the same arguments as above, we prove that the joint distribution $\gamma_n$ of $(\N_n,\overline\N_n)$ converges
weakly to $\gamma$. Note that $\gamma_n$ is a self-joining of order~2 of the Poisson suspension of intensity $\mu\otimes\left( \sum_{j\ge0} 2^{-n} \delta_{j2^{-n}}\right)$, but it can also be interpreted as an infinite ergodic self-joining of the Poisson suspension $\left(X^*,\A^*, (2^{-n}\mu)^*,T_*\right)$. As such, by Theorem~\ref{theo:Any-ergodic-countable}, it is a Poisson joining. Then by Proposition~\ref{prop:inf-divis}, $\gamma_n$ is infinitely divisible. By Proposition 11.2.II in~\cite{DaleyVereJones}, infinite divisibility is closed under weak convergence of distributions, hence $\gamma$ is also infinitely divisible. We conclude by Proposition~\ref{prop:inf-divis} that $\gamma$ is a Poisson joining. 
\end{proof}
 
 \begin{remark}
   In fact it is possible to strengthen the above proposition by proving that the considered Poisson suspension is \PaP$(\infty)$, and moreover that all its
   finite or countable ergodic self-joinings are factors of itself.
 \end{remark}
 
 Applying Propositions~\ref{prop:PoissonVariables} and~\ref{prop:PaPFactors}, we get the following result.

\begin{corollary}\label{PaPfactors}
The factors corresponding to countable self-joinings in Theorem \ref{theo:Any-ergodic-countable} are actually \PaP\ Poisson factors.
\end{corollary}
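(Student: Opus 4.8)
The plan is to recognize the factors produced by Theorem~\ref{theo:Any-ergodic-countable} as Poisson factors of the universal Poisson suspension
$\mathcal{U}\egdef\left(\left(X\times\RR_{+}\right)^{*},\left(\A\otimes\B\right)^{*},\left(\mu\otimes\lambda\right)^{*},\left(T\times\Id\right)_{*}\right)$, and then to transfer the \PaP\ property of $\mathcal{U}$ down to those factors.

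First I would return to the construction in the proof of Theorem~\ref{theo:Any-ergodic-countable}: an ergodic joining of the suspensions $\left(X^{*},\A^{*},(\alpha_i\mu)^{*},T_{*}\right)$ is realized on $\mathcal{U}$ through a family of $T$-point processes $\tilde N_j=(\pi_j)_*(\N|_{Z_j})$, where $\N$ is the Poisson process of intensity $\mu\otimes\lambda$ on $X\times\RR_+$, each $Z_j$ is a $(T\times\Id)$-invariant subset of $X\times\RR_+$, and $\pi_j$ restricted to a piece $X\times J_i$ equals $T^{k(i,j)}\circ\pi$, with $\pi$ the projection onto $X$. The factor $\mathcal{K}$ attached to the joining is the one generated by the variables $\tilde N_j(A)$, $j\ge1$, $A\in\A_f$. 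The point of this first step is that each of these is an evaluation of the universal process on a set of finite measure: indeed $\tilde N_j(A)=\N\bigl(\bigsqcup_{i\in I_j}(T^{-k(i,j)}A)\times J_i\bigr)$, a set of $(\mu\otimes\lambda)$-measure $\alpha_j\,\mu(A)<\infty$ whenever $A\in\A_f$. Hence $\mathcal{K}$ is generated by random variables of the form $\N(B)$ with $B\in\left(\A\otimes\B\right)_f$.

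With this in hand the argument is a short chain of results already established. By Proposition~\ref{prop:BigMama}, the suspension $\mathcal{U}$ is \PaP. Proposition~\ref{prop:PoissonVariables}, applied to $\mathcal{U}$ (with $X\times\RR_+$ playing the role of $X$), then shows that a factor generated by random variables of the form $\N(B)$, $B\in\left(\A\otimes\B\right)_f$, is a Poisson factor; thus $\mathcal{K}$ is a Poisson factor of $\mathcal{U}$. Finally, Proposition~\ref{prop:PaPFactors} asserts that a Poisson factor of a \PaP\ suspension is itself \PaP, so $\mathcal{K}$ is \PaP, which is precisely the statement of the corollary.

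I do not expect a real obstacle here: all the genuine content is already in place, namely Theorem~\ref{theo:Any-ergodic-countable} (ergodic countable joinings of the suspensions $T_*^{(\alpha_i)}$ are Poisson joinings) and Proposition~\ref{prop:BigMama} (the universal suspension is \PaP). The only step demanding a bit of care is the first one, unwinding the inductive construction of the $\tilde N_j$ in the proof of Theorem~\ref{theo:Any-ergodic-countable} to confirm that each $\tilde N_j(A)$ really is of the form $\N(B)$ for a finite-measure set $B$, so that Proposition~\ref{prop:PoissonVariables} applies verbatim.
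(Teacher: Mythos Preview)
Your argument is correct and matches the paper's own proof, which simply says the corollary follows by applying Propositions~\ref{prop:PoissonVariables} and~\ref{prop:PaPFactors} (with Proposition~\ref{prop:BigMama} implicitly providing the \PaP\ property of the universal suspension). You have spelled out the one point the paper leaves to the reader, namely that the generators $\tilde N_j(A)$ of the factor are indeed of the form $\N(B)$ for finite-measure sets $B\in\A\otimes\B$, so that Proposition~\ref{prop:PoissonVariables} applies.
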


\section{Disjointness results}
\label{sec:disjointness}

\subsection{Non-disjointness, factors and distal extension}
\label{sec:lemanczyk}
Furstenberg, when introducing joinings and disjointness in \cite{Fur67Disj}, asked whether two non-disjoint systems always possess a non-trivial common factor. 
In~\cite{Rudolph79}, this was shown to be false by Rudolph. However, all counterexamples to Furstenberg's question known so far have the property that one of the 
two non-disjoint system is a factor of a distal extension of the other one. (For definition and properties of distal extensions, we refer \textit{e.g.} to~\cite{Glas03Ergojoin}, Chapter~10.) This led Lema\'nczyk to ask whether the latter property always holds for two non-disjoint systems~\cite{lemanczyk2003}. 
Actually, our Poisson suspensions provide a new counterexample to Furstenberg's question, which also answer Lema\'nczyk's question negatively. Recall that, for any $\alpha>0$, we denote by $T_{*}^{\left(\alpha\right)}$ the Poisson suspension $\left(X^{*},\mathcal{A}^{*},\left(\alpha\mu\right)^{*},T_{*}\right)$. 

\begin{prop}
\label{prop:Talpha_Tbeta}
Assume that properties~\eqref{P1} and~\eqref{P2} hold for $T$. Then for any $\alpha\neq\beta$, $T_{*}^{\left(\alpha\right)}$ and $T_{*}^{\left(\beta\right)}$ are prime and not disjoint. However,  $T_{*}^{\left(\alpha\right)}$ is never a factor of a distal extension of $T_{*}^{\left(\beta\right)}$.
\end{prop}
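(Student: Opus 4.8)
The plan is to establish the three assertions in turn, the last being the substantial one. \emph{Primeness and non-disjointness.} Properties~\eqref{P1} and~\eqref{P2} are insensitive to rescaling the base measure (ergodicity of products is scale-invariant, and the measures $m_\pi^\kappa$ are ergodic components, defined only up to a multiplicative constant), so $(X,\A,\alpha\mu,T)$ and $(X,\A,\beta\mu,T)$ satisfy them as well; Theorem~\ref{theo:PaPandPrime} then shows that $T_*^{(\alpha)}$ and $T_*^{(\beta)}$ are prime (and \PaP$(\infty)$, mildly mixing, a fact I reuse below). For non-disjointness, assume $\alpha<\beta$ and put on one probability space independent Poisson $T$-point processes $N_1$, $N_2$ of intensities $\alpha\mu$ and $(\beta-\alpha)\mu$; then $N_1$ has intensity $\alpha\mu$, $N_1+N_2$ has intensity $\beta\mu$, and the law of $(N_1,N_1+N_2)$ is a joining of $T_*^{(\alpha)}$ and $T_*^{(\beta)}$ which is not the product joining, since $N_1(A)$ and $(N_1+N_2)(A)$ have covariance $\Var(N_1(A))=\alpha\mu(A)>0$ for $A\in\A_f$, whereas they would be independent under the product joining.

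\emph{Set-up for the distal statement.} Suppose for contradiction that $Z\to T_*^{(\beta)}=:Y$ is a distal extension and $Z\to T_*^{(\alpha)}=:A$ a factor; we may assume $Z$ ergodic (otherwise pass to a suitable ergodic component). Since a factor of a distal extension over a fixed base is again a distal extension of that base (\cite{Glas03Ergojoin}, Chapter~10), the intermediate factor $A\vee Y$ of $Z$, which may be identified with the system $(A\times Y,\lambda,T_*\times T_*)$ for the joining $\lambda$ induced by the two factor maps, satisfies: $A\vee Y\to Y$ is distal, and $\lambda$ is an ergodic joining of $T_*^{(\alpha)}$ and $T_*^{(\beta)}$, hence a Poisson joining by Theorem~\ref{theo:Any-ergodic-countable}. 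I would then read off the explicit description of Poisson joinings recalled after Proposition~\ref{prop:inf-divis}: $\lambda=\widehat m$ for a sub-joining $m$ of $(X,\gamma_1,T)$ and $(X,\gamma_2,T)$ with $\gamma_1\le\alpha\mu$, $\gamma_2\le\beta\mu$. By ergodicity each $\gamma_i$ is a multiple of $\mu$, and since $m$ is a boundedly finite $T\times T$-invariant measure with $\sigma$-finite marginals, Property~\eqref{P2} forces all its ergodic components to be diagonal measures $\Delta_{T^k}$ (the product component $\mu\otimes\mu$ cannot occur, its marginals being non-$\sigma$-finite); thus $m=\sum_k a_k\Delta_{T^k}$ with $a:=\sum_k a_k\le\min(\alpha,\beta)$ and both marginals equal to $a\mu$. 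Accordingly $\lambda$ is modelled by mutually independent Poisson processes $(\eta_k)_k$, $\zeta_\alpha$, $\zeta_\beta$ of intensities $a_k\mu$, $(\alpha-a)\mu$, $(\beta-a)\mu$, with $N^{(\alpha)}=\zeta_\alpha+\sum_k\eta_k$ and $N^{(\beta)}=\zeta_\beta+\sum_k T_*^k\eta_k$, and a short argument discarding coincidence points shows $A\vee Y=\langle(\eta_k)_k,\zeta_\alpha,\zeta_\beta\rangle$.

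\emph{Conclusion.} It now suffices to exhibit a non-trivial intermediate extension $A\vee Y\to W\to Y$ that is relatively weakly mixing over $Y$, which is impossible because a distal extension that is relatively weakly mixing is trivial (\cite{Glas03Ergojoin}, Chapter~10). If $a<\alpha$ (automatic when $\alpha>\beta$), take $W:=\langle Y,\zeta_\alpha\rangle\cong Y\times T_*^{(\alpha-a)}$; its relatively independent self-joining over $Y$ is $Y\times T_*^{(\alpha-a)}\times T_*^{(\alpha-a)}$, ergodic because $T_*^{(\alpha-a)}\times T_*^{(\alpha-a)}$ is a Poisson suspension over a base with no invariant set of finite positive measure, hence weakly mixing. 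If $a=\alpha$ (so necessarily $\alpha<\beta$), take $W:=\langle\zeta_\beta,\sum_k T_*^k\eta_k\rangle$, which is $T_*^{(\beta)}$ equipped with an independent $p$-marking of the points of $N^{(\beta)}$ where $p=\tfrac{\alpha}{\beta}\delta_0+\tfrac{\beta-\alpha}{\beta}\delta_1$; its relatively independent self-joining over $Y$ is the Poisson suspension over $\bigl(X\times\{0,1\}^2,\beta\mu\otimes p^{\otimes 2},T\times\Id\bigr)$, again with no invariant set of finite positive measure, hence ergodic. In both cases $W\to Y$ is relatively weakly mixing and non-trivial (since respectively $\alpha-a>0$ and $0<\alpha<\beta$), which is the sought contradiction. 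The main obstacle is the middle paragraph: extracting from Property~\eqref{P2} the precise shape $m=\sum_k a_k\Delta_{T^k}$ with equal marginals, and checking rigorously that $A\vee Y$ reconstructs the pieces $\eta_k,\zeta_\alpha,\zeta_\beta$ modulo null sets, the genuine matchings being separated from measure-zero coincidences.
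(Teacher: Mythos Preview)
Your argument is correct. The primeness and non-disjointness parts match the paper's proof essentially verbatim. For the distal statement, however, you take a genuinely different route.

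The paper argues as follows: the ergodic joining $A\vee Y$ is, by Corollary~\ref{PaPfactors}, itself a \PaP\ Poisson suspension, and $Y=T_*^{(\beta)}$ sits inside it as a Poisson factor. Proposition~\ref{Poissonfactors} then gives immediately that $A\vee Y\to Y$ is relatively weakly mixing, so distality forces $A\vee Y\cong Y$; thus $T_*^{(\alpha)}$ is a factor of the prime system $T_*^{(\beta)}$, hence isomorphic to it, which contradicts Lemma~\ref{lemma:non_isomorphism}.

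Your approach bypasses the \PaP\ machinery (Corollary~\ref{PaPfactors}, Proposition~\ref{Poissonfactors}) and the non-isomorphism Lemma~\ref{lemma:non_isomorphism}. Instead you invoke~\eqref{P2} directly on the sub-joining $m$ to obtain the explicit decomposition $m=\sum_k a_k\Delta_{T^k}$, reconstruct the independent pieces $\eta_k,\zeta_\alpha,\zeta_\beta$ inside $A\vee Y$, and then build by hand a non-trivial intermediate factor $W$ with $W\to Y$ relatively weakly mixing. The case split $a<\alpha$ versus $a=\alpha$ is a price you pay for this explicitness, but the payoff is that the argument is self-contained and does not rely on the general structure theory of \PaP\ suspensions. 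The paper's route is shorter and more conceptual, and illustrates what the \PaP\ property buys: once you know the joining is \PaP, the relative weak mixing over any Poisson factor is automatic, no explicit decomposition of $m$ is needed, and the contradiction is reached via the separately proved non-isomorphism of $T_*^{(\alpha)}$ and $T_*^{(\beta)}$.

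One small remark: your claim that intermediate factors of distal extensions are distal, and the reconstruction $A\vee Y=\sigma\bigl((\eta_k)_k,\zeta_\alpha,\zeta_\beta\bigr)$, are both correct; the latter is exactly the content of Lemma~\ref{lemma:multiple Poisson} specialised to the pair $(N^{(\alpha)},N^{(\beta)})$, so you could cite it rather than redo the coincidence argument.
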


\begin{lemma}
  \label{lemma:non_isomorphism}
  If $\alpha\neq\beta$, $T_{*}^{\left(\alpha\right)}$ and $T_{*}^{\left(\beta\right)}$ are not isomorphic.
\end{lemma}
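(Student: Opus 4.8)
The plan is to distinguish $T_*^{(\alpha)}$ from $T_*^{(\beta)}$ by an isomorphism invariant that depends on the intensity parameter. The most natural candidate is the structure of the first Poissonian chaos together with the self-joining structure, but the cleanest route goes through the ergodic self-joinings: by Theorem~\ref{theo:PaPandPrime}, both systems are \PaP, so every ergodic self-joining of order~2 is a Poisson joining. As recalled after Proposition~\ref{prop:inf-divis}, a Poisson self-joining of $T_*^{(\alpha)}$ is built from a sub-joining $m$ of $(X,\A,\alpha\mu,T)$ with itself (together with the independent padding of the leftover intensities). The key point is that the "total mass'' carried by the off-diagonal part of such a sub-joining is constrained by the intensity $\alpha$: more precisely, I would look at those Poisson self-joinings supported (on the base level) by measures of the form $\sum_k a_k \Delta_{T^k}$ as in the Example, which exist for $T_*^{(\alpha)}$ precisely when $\sum_k a_k \le \alpha$ after rescaling — hmm, in fact by scaling this condition is $\alpha$-independent, so this particular family does not separate the two systems.

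So instead I would use a genuinely quantitative invariant. First I would recall that an isomorphism $\Psi: X^*\to X^*$ between $T_*^{(\alpha)}$ and $T_*^{(\beta)}$ would, via the \PaP\ property applied to the graph self-joining it induces, have its associated Markov operator equal to the exponential $\widetilde\varphi$ of a sub-Markov operator $\varphi$ between $L^2(\alpha\mu)$ and $L^2(\beta\mu)$ that intertwines $U_T$ with itself; since $\widetilde\varphi$ is an isometry (indeed unitary), $\varphi$ is unitary, hence induced by a $\mu$-preserving transformation $S\in C(T)$ with an extra scaling to match the measures. But $\varphi$ must send the dense subspace spanned by $\ind{A}$, $A\in\A_f$, to the corresponding subspace, and the isometry relation forces $\langle \ind A,\ind A\rangle_{L^2(\alpha\mu)} = \alpha\mu(A)$ on one side and $\beta\mu(\varphi(\ind A))$ on the other; tracking a fixed set $A$ and using that $S$ preserves $\mu$ and commutes with $T$ (so by triviality of $C(T)$, $S$ is a power of $T$, hence $\mu(SA)=\mu(A)$), one gets $\alpha\mu(A)=\beta\mu(A)$, forcing $\alpha=\beta$.

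Concretely, the steps in order are: (1) assume $\Psi$ is an isomorphism $T_*^{(\alpha)}\to T_*^{(\beta)}$; (2) form the self-joining of the "universal'' type — or more simply, note that $\Psi$ together with the identity gives a joining of $T_*^{(\alpha)}$ and $T_*^{(\beta)}$, which is ergodic and, being a countable ergodic joining of systems of the form $T_*^{(\gamma)}$, is Poisson by Theorem~\ref{theo:Any-ergodic-countable}; (3) extract the associated Markov operator between $L^2(\alpha\mu)$ and $L^2(\beta\mu)$, which by Proposition~\ref{prop:inf-divis}(3) is the exponential of a sub-Markov operator $\varphi$; (4) since $\Psi$ is invertible and measure-preserving, the exponential operator is unitary, so $\varphi$ is unitary and intertwines $U_T$; (5) a unitary intertwiner of $U_T$ on these $L^2$ spaces, carrying indicators to combinations of indicators (this requires a small argument, using that $\widetilde\varphi$ must preserve the first chaos and map the cone of "positive'' first-chaos vectors appropriately), is induced by an element of $C(T)$ up to the scaling factor $\sqrt{\beta/\alpha}$; (6) by triviality of $C(T)$ (Proposition~5.1 of \cite{chaconinfinite}, a consequence of \eqref{P1} and \eqref{P2}), this element is a power of $T_*$, whence the scaling must be trivial and $\alpha=\beta$.

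The main obstacle I anticipate is step~(5): showing that a unitary $\varphi$ coming from a Poisson joining and intertwining $U_T$ is actually spatial, i.e.\ induced (up to a constant) by a point map. One has to go from "$\widetilde\varphi$ is a Markov isomorphism of the suspensions of exponential form'' to "$\varphi$ is a rescaled Koopman operator of a base isomorphism''. This is exactly the kind of statement proved in \cite{ParreauRoy} (their Proposition~1, already invoked in the proof of Proposition~\ref{prop:subPoissonFactor}) and in \cite{Roy07Infinite}; the argument is that a sub-Markov operator whose exponential is an isometry must itself be an isometry, and an isometry $L^2(\alpha\mu)\to L^2(\beta\mu)$ that is in fact invertible and respects the lattice/positivity structure induced on the bases is spatial. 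I would cite these references rather than reprove the structure theorem, so the residual work is just bookkeeping with the scaling constants and an appeal to the triviality of $C(T)$.
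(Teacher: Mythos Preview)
Your approach can be made to work and is genuinely different from the paper's, but there is a real gap in step~(5)--(6) as written.

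The paper's proof avoids the operator-theoretic route entirely. It takes the graph joining of the assumed isomorphism, in which two Poisson $T$-point processes $N_\alpha$ and $N_\beta$ (of intensities $\alpha\mu$, $\beta\mu$) each generate the full $\sigma$-algebra, and applies Lemma~\ref{lemma:multiple Poisson} directly. This produces independent Poisson pieces $N_\alpha^\infty$, $N_\beta^\infty$, $(N_\alpha^k)_{k\in\ZZ}$ with $N_\alpha=N_\alpha^\infty+\sum_k N_\alpha^k$ and $N_\beta=N_\beta^\infty+\sum_k T_*^k(N_\alpha^k)$. Since $N_\alpha^\infty$ is $\sigma(N_\beta)$-measurable yet independent of the family generating $N_\beta$, it must vanish; symmetrically $N_\beta^\infty=0$. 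The remaining pieces give $N_\alpha$ and $N_\beta$ the same intensity, so $\alpha=\beta$. No spatiality of $\varphi$, no appeal to $C(T)$.

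The gap in your version: your step~(5) asserts that a unitary sub-Markov $\varphi$ intertwining $U_T$ is a \emph{single} scaled Koopman operator, and your step~(6) invokes triviality of $C(T)$. But a priori $\varphi$ corresponds to a sub-joining which, by~\eqref{P2}, has the form $m=\sum_k a_k\Delta_{T^k}$, so $\varphi=\sum_k c_k U_{T^k}$ with $c_k\ge0$; this is not obviously a scaled Koopman operator. Moreover, the triviality of $C(T)$ proved in~\cite{chaconinfinite} concerns \emph{$\mu$-preserving} maps, whereas you would need it for maps satisfying $S_*\mu=c\mu$ with $c\neq 1$. Both issues can be repaired (unitarity plus positivity of the $c_k$ does force a single nonzero term; and the extended centralizer claim follows from~\eqref{P2} applied to the graph of $S$), but as written the references you cite do not cover this, and the argument is not complete.

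A much quicker finish along your own lines: once you know the joining is Poisson, described by a sub-joining $m$ plus independent paddings, the graph-joining property forces both paddings to vanish (exactly the ``$N_\alpha^\infty=N_\beta^\infty=0$'' step). Then $m$ has marginals $\alpha\mu$ and $\beta\mu$. But by~\eqref{P2}, $m=\sum_k a_k\Delta_{T^k}$ has \emph{equal} marginals $(\sum_k a_k)\mu$ on both sides, so $\alpha=\beta$ immediately---no spatiality, no $C(T)$.
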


\begin{proof}
  Assume that $T_{*}^{\left(\alpha\right)}$ and $T_{*}^{\left(\beta\right)}$ are isomorphic. Then there exists an ergodic joining of these systems which is supported on the graph of an isomorphism. In this joining, we can find two Poisson $T$-point processes $N_\alpha$ and $N_\beta$, of respective intensity $\alpha\mu$ and $\beta\mu$, each of them generating the whole $\sigma$-algebra.
  Then, by Lemma~\ref{lemma:multiple Poisson}, there exist independent Poisson $T$-point processes $N_\alpha^\infty$, $N_\beta^\infty$ and $N_\alpha^k$, $k\in\ZZ$, which are all measurable with respect to $N_\beta$ (and also with respect to $N_\alpha$), such that 
  \[
    N_\alpha = N_\alpha^\infty + \sum_{k\in\ZZ} N_\alpha^k,
  \]
  and\[
    N_\beta = N_\beta^\infty + \sum_{k\in\ZZ} T_*^k\left(N_\alpha^k\right).
  \]
Then $N_\alpha^\infty$ is both measurable with respect to $N_\beta$, and independent of it because it is independent of the family $(N_\beta^\infty, N_\alpha^k,\ k\in\ZZ)$. It follows that $N_\alpha^\infty=0$ a.s. For the same reason, $N_\beta^\infty=0$ a.s., and we get that the intensities of $N_\alpha$ and $N_\beta$ coincide, \textit{i.e.} $\alpha=\beta$. 
\end{proof}

\begin{proof}[Proof of Proposition~\ref{prop:Talpha_Tbeta}]
We already know from Theorem~\ref{theo:PaPandPrime} that the Poisson suspensions $T_{*}^{\left(\alpha\right)}$ and $T_{*}^{\left(\beta\right)}$ are prime. 
Let us see why they are not disjoint. Assume without loss of generality that $0<\alpha<\beta$. In the direct product of $T_{*}^{\left(\alpha\right)}$ and $T_{*}^{\left(\beta-\alpha\right)}$, we have two independent Poisson $T$-point processes $N_\alpha$ and $N_{\beta-\alpha}$, of respective intensities $\alpha\mu$ and $(\beta-\alpha)\mu$. Then $N_\alpha+N_{\beta-\alpha}$ is a Poisson $T$-point process of intensity $\beta\mu$, which is not independent of $N_\alpha$. Hence the distribution of $(N_\alpha,N_\alpha+N_{\beta-\alpha})$ is a joining of $T_{*}^{\left(\alpha\right)}$ and $T_{*}^{\left(\beta\right)}$ which is not the product measure.

Now, we release the assumption $\alpha<\beta$, and we assume there exists an ergodic map $S$ such that $S\rightarrow T_{*}^{\left(\beta\right)}$
is a distal extension and $T_{*}^{\left(\alpha\right)}$ is a factor
of $S$. Then $T_{*}^{\left(\beta\right)}\vee T_{*}^{\left(\alpha\right)}$ appears as
an ergodic joining and a factor of $S$. By Proposition  \ref{PaPfactors}, the
joining is also a \PaP\ suspension. Then
$T_{*}^{\left(\beta\right)}$ is a Poisson factor of $T_{*}^{\left(\beta\right)}\vee T_{*}^{\left(\alpha\right)}$
and as such the extension $T_{*}^{\left(\beta\right)}\vee T_{*}^{\left(\alpha\right)}\rightarrow T_{*}^{\left(\beta\right)}$
is relatively weakly mixing by Proposition \ref{Poissonfactors}. Therefore we have the following sequence
of extensions
\[
S\rightarrow T_{*}^{\left(\beta\right)}\vee T_{*}^{\left(\alpha\right)}\rightarrow T_{*}^{\left(\beta\right)}
\]

But, as the extension $S\rightarrow T_{*}^{\left(\beta\right)}$ is distal, $T_{*}^{\left(\beta\right)}\vee T_{*}^{\left(\alpha\right)}\rightarrow T_{*}^{\left(\beta\right)}$ cannot be relatively weakly mixing unless it is an isomorphism (see Proposition 10.14 in \cite{Glas03Ergojoin}). Then this implies that $T_{*}^{\left(\alpha\right)}$ is a factor of $T_{*}^{\left(\beta\right)}$. Since the latter is prime by Theorem~\ref{theo:PaPandPrime},  $T_{*}^{\left(\alpha\right)}$ and $T_{*}^{\left(\beta\right)}$ are isomorphic. But by Lemma~\ref{lemma:non_isomorphism}, this happens only if $\alpha=\beta$. 
\end{proof}

\subsection{General results}
\label{sec:general_disjointness}

\begin{definition}
  A \emph{measurable law of large numbers} for a conservative, ergodic, measure preserving dynamical system $(X,\A,\mu,T)$ is a measurable 
  function $L:\{0,1\}^\NN\to[0,\infty]$ such that
  for all $B\in\A$, for $\mu$-almost every $x\in X$,
  \[ 
    L\left(\ind{B}(x),\ind{B}(Tx),\ldots\right) = \mu(B).
  \]
\end{definition}

\begin{lemma}
\label{lemma:XcroixC}
Let $(X,\A,\mu,T)$ be a conservative, ergodic, measure preserving dynamical system, and assume that it admits a   
measurable law of large numbers. Let $\mathcal{L}$ be a $\sigma$-finite factor of the product dynamical system 
\[
  (X\times \RR_+,\A\otimes\B,\mu\otimes\lambda,T\times\Id).
\]
Then there exists $C\subset\RR_+$ with $0<\lambda(C)<\infty$, such that $X\times C\in\mathcal{L}$.
\end{lemma}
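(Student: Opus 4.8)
\emph{Plan of proof.} The strategy is to use the measurable law of large numbers as a device that reads the height coordinate $t\in\RR_+$ off a $(T\times\Id)$-orbit in an $\mathcal{L}$-measurable way. Concretely, from an arbitrary finite-measure set of $\mathcal{L}$ I will produce the function $t\mapsto\mu(E_t)$ (where $E_t$ denotes the section of $E$ at height $t$) as an $\mathcal{L}$-measurable function depending only on $t$, and then take $C$ to be a well-chosen superlevel set of this function. First, since $\mathcal{L}$ is $\sigma$-finite for $\mu\otimes\lambda$ while $(\mu\otimes\lambda)(X\times\RR_+)=\infty$, I fix $E\in\mathcal{L}$ with $0<(\mu\otimes\lambda)(E)<\infty$. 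For $t\in\RR_+$ set $E_t\egdef\{x\in X:(x,t)\in E\}\in\A$; by Fubini, $\mu(E_t)<\infty$ for $\lambda$-a.e.\ $t$ and $\int_{\RR_+}\mu(E_t)\,d\lambda(t)=(\mu\otimes\lambda)(E)\in(0,\infty)$.

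Next I consider the function
\[
  h(x,t)\egdef L\Bigl(\ind{E}(x,t),\,\ind{E}\bigl((T\times\Id)(x,t)\bigr),\,\ind{E}\bigl((T\times\Id)^2(x,t)\bigr),\,\ldots\Bigr).
\]
On one hand, because $E\in\mathcal{L}$ and $\mathcal{L}$ is exactly invariant under the invertible map $T\times\Id$, each function $\ind{E}\circ(T\times\Id)^n$ is $\mathcal{L}$-measurable, so the $\{0,1\}^\NN$-valued map recording the $E$-itinerary of $(x,t)$ is $\mathcal{L}$-measurable, and hence so is $h$ since $L$ is Borel. On the other hand, for each fixed $t$ with $\mu(E_t)<\infty$ the itinerary of $(x,t)$ under $T\times\Id$ relative to $E$ is precisely $\bigl(\ind{E_t}(T^nx)\bigr)_{n\ge0}$, so the measurable law of large numbers applied to $B=E_t$ gives $h(x,t)=\mu(E_t)$ for $\mu$-a.e.\ $x$; by Fubini, $h(x,t)=\mu(E_t)$ for $(\mu\otimes\lambda)$-a.e.\ $(x,t)$. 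Writing $\phi(t)\egdef\mu(E_t)$ and $\pi\colon X\times\RR_+\to\RR_+$ for the projection, we conclude that $\phi\circ\pi$ agrees $(\mu\otimes\lambda)$-a.e.\ with the $\mathcal{L}$-measurable function $h$; assuming, as we may, that $\mathcal{L}$ is complete, $\phi\circ\pi$ is $\mathcal{L}$-measurable, i.e.\ $X\times\phi^{-1}(S)\in\mathcal{L}$ for every Borel $S\subset[0,\infty)$.

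Finally, $\phi\ge0$, $\phi\in L^1(\RR_+,\lambda)$ with $\|\phi\|_{1}=(\mu\otimes\lambda)(E)>0$. By Markov's inequality $\lambda(\{\phi>a\})\le\|\phi\|_1/a<\infty$ for every $a>0$, whereas $\lambda(\{\phi>a\})>0$ for all sufficiently small $a>0$ since $\int\phi\,d\lambda>0$. Choosing such an $a$ and setting $C\egdef\{t\in\RR_+:\phi(t)>a\}$ yields $0<\lambda(C)<\infty$ and $X\times C=(\phi\circ\pi)^{-1}\bigl((a,+\infty)\bigr)\in\mathcal{L}$, which is what we want. The only point requiring care is the measurability bookkeeping in the middle paragraph — that $h$ is genuinely $\mathcal{L}$-measurable (using that a factor $\sigma$-algebra of an invertible system is invariant, not merely sub-invariant) and that an a.e.\ identity can be promoted to membership in $\mathcal{L}$; the remaining steps are routine Fubini and Markov estimates, and the role of the law of large numbers is exactly to make $\mu(E_t)$ legible from the orbit.
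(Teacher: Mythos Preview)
Your proof is correct and follows essentially the same route as the paper's: pick a finite-measure $E\in\mathcal{L}$, use the measurable law of large numbers to recognize $(x,t)\mapsto\mu(E_t)$ as an $\mathcal{L}$-measurable function, and take $C$ to be a superlevel set of $t\mapsto\mu(E_t)$. Your version is slightly more explicit about the measurability bookkeeping (invariance of $\mathcal{L}$ under $T\times\Id$, passage to the completion, Markov's inequality), but the argument is identical.
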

\begin{proof}
  Since $\mathcal{L}$ is $\sigma$-finite, there exists $B\in\mathcal{L}$ such that $0<\mu\otimes\lambda(B)<\infty$.
  For each $t\in\RR_+$, let us consider
  \[
    B_t \egdef \{x\in X: (x,t)\in B\}.
  \]
  Denote by $L$ a measurable law of large numbers for $(X,\A,\mu,T)$. Then, for $\mu\otimes\lambda$-almost every $(x,t)\in X\times\RR_+$, we have
  \[
    L\Bigl(\left(\ind{B}(T^kx,t)\right)_{k\ge 0}\Bigr) = \mu(B_t).
  \]
  This ensures that the map $(x,t)\mapsto\mu(B_t)$ is $\mathcal{L}$-measurable. In particular, for any $\varepsilon>0$, the set
  $\{(x,t): \mu(B_t)\ge\varepsilon\}$ is $\mathcal{L}$-measurable. This set is of the form $X\times C$ for $C\subset\RR_+$. We
  have 
  \[\infty > \mu\otimes\lambda(B) = \int_{\RR_+} \mu(B_t)\, d\lambda(t) \ge \varepsilon \lambda(C), \]
  and choosing $\varepsilon$ small enough, we have $\lambda(C)>0$.
\end{proof}

\begin{definition}
  A  conservative, ergodic, measure preserving dynamical system $(X,\A,\mu,T)$ is \emph{rationally ergodic} if there exists 
  a set $B\in\A$, $0<\mu(B)<\infty$, and a constant $M>0$ such that, for any $n\ge1$, 
  \[
    \int_B \left(\sum_{0\le j\le n-1}\ind{B}(T^jx)\right)^2 \, d\mu(x) \le M \left(\int_B \sum_{0\le j\le n-1}\ind{B}(T^jx) \, d\mu(x)\right)^2.
  \]
\end{definition}

According to Theorem~3.3.1 in~\cite{Aaronson}, a measurable law of large numbers exists for $T$ as soon as $T$ is rationally ergodic, which is 
the case of the nearly finite Chacon transformation (see~\cite{nfc}). 
Observe however that properties~\eqref{P1} and~\eqref{P2} alone imply the existence of a law of large numbers, but it happens that the question of its measurability remains open without rational ergodicity.

The following proposition applies therefore to the case of the Poisson suspension over the
nearly finite Chacon transformation.

\begin{prop}
\label{prop:Disjonction}Assume that properties~\eqref{P1} and~\eqref{P2} hold for $T$, and that $T$ admits a measurable law of large numbers. If a system $\left(Y,\mathcal{B},\nu,S\right)$
is not disjoint from some $n$-order ergodic self-joining of $\left(X^{*},{\A}^{*},\mu^{*},T_{*}\right)$,
then it possesses $\left(X^{*},{\A}^{*},\left(\alpha\mu\right)^{*},T_{*}\right)$
as a factor, for some $\alpha>0$.
\end{prop}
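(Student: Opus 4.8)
The plan is to exploit the non-disjointness to produce, inside a single ergodic system, a copy of the Poisson suspension together with a factor that carries enough information to recover an intensity parameter. More precisely, suppose $\gamma$ is an $n$-order ergodic self-joining of $\left(X^{*},{\A}^{*},\mu^{*},T_{*}\right)$ which is not disjoint from $\left(Y,\mathcal B,\nu,S\right)$. Then there is an ergodic joining $\lambda$ of $\gamma$ with $\left(Y,\mathcal B,\nu,S\right)$ which is not the product measure. The joining $\gamma$, being ergodic, is a Poisson joining by Theorem~\ref{theo:Any-ergodic-countable} (viewing an $n$-fold self-joining as a joining of the family $T_*^{(1)},\ldots,T_*^{(1)}$); hence by the remark following Proposition~\ref{prop:BigMama}, or directly by Corollary~\ref{PaPfactors}, $\gamma$ is realized as a factor of the universal suspension $\bigl((X\times\RR_+)^{*},({\A}\otimes\mathcal B)^{*},(\mu\otimes\lambda)^{*},(T\times\Id)_{*}\bigr)$, which is \PaP\ by Proposition~\ref{prop:BigMama}.

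The key step is then to upgrade the joining with $Y$ to the level of this universal suspension. The idea is that, since $\left(X^*,\A^*,\mu^*,T_*\right)$ and the universal suspension are \PaP\ (so in particular semi-simple), and since $\gamma$ is a Poisson factor of the universal suspension, the joining $\lambda$ of $\gamma$ with $Y$ extends to an ergodic joining $\widehat\lambda$ of the universal suspension with $Y$: indeed $Y$ is not disjoint from $\gamma$, hence not disjoint from the universal suspension, and we may take an ergodic component of the relatively independent joining of $\widehat\lambda$ over $\gamma$ (this works because the extension from the universal suspension to $\gamma$ is relatively weakly mixing, $\gamma$ being a Poisson factor, so ergodicity is preserved). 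We then look at the factor $\mathcal K$ of $\bigl((X\times\RR_+)^{*},\ldots\bigr)$ generated by the $Y$-coordinate inside $\widehat\lambda$; this $\mathcal K$ is a non-trivial factor because $\widehat\lambda$ is not a product.

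Now Proposition~\ref{prop:subPoissonFactor}, applied to the \PaP\ suspension over $X\times\RR_+$, yields a non-trivial Poisson factor $\mathcal C^*\subset\mathcal K$, where $\mathcal C\subset(\A\otimes\B)_{|W}$ is a $T\times\Id$-invariant $\sigma$-finite sub-$\sigma$-algebra on a $T\times\Id$-invariant set $W\subset X\times\RR_+$ of positive measure. Because the $Y$-coordinate generates $\mathcal K$, this Poisson factor is measurable with respect to $Y$, so $\left(Y,\mathcal B,\nu,S\right)$ has a Poisson suspension over $(W,\mathcal C,(\mu\otimes\lambda)|_W,T\times\Id)$ as a factor. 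It remains to show this Poisson factor contains a copy of $T_*^{(\alpha)}$ for some $\alpha>0$. Here is where the measurable law of large numbers enters: by Lemma~\ref{lemma:XcroixC}, the $\sigma$-finite factor $\mathcal C$ of $(X\times\RR_+,\A\otimes\B,\mu\otimes\lambda,T\times\Id)$ contains a set of the form $X\times C$ with $0<\lambda(C)=:\alpha<\infty$; restricting the Poisson factor $\mathcal C^*$ to the points falling in $X\times C$ and projecting onto $X$ produces a Poisson $T$-point process of intensity $\alpha\mu$, measurable with respect to $Y$, and generating a copy of $T_*^{(\alpha)}$ as a factor of $\left(Y,\mathcal B,\nu,S\right)$.

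The main obstacle I expect is the second paragraph: making rigorous the passage from non-disjointness with the factor $\gamma$ to an ergodic joining with the full universal suspension, and keeping control of ergodicity while lifting through the (relatively weakly mixing) Poisson-factor extension. One must be careful that the lift $\widehat\lambda$ is genuinely non-product --- this follows because the $Y$-marginal of $\widehat\lambda$ restricted to $\gamma$ is $\lambda$, which is not product --- and that an ergodic component of a relatively-independent-type construction can be chosen to still project onto $\nu$ on the $Y$-side and onto the full universal suspension on the other. Once this is in place, the remaining steps (applying Proposition~\ref{prop:subPoissonFactor}, then Lemma~\ref{lemma:XcroixC}, then projecting) are essentially bookkeeping with Poisson factors.
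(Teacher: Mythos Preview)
Your approach has the right endgame (Proposition~\ref{prop:subPoissonFactor} followed by Lemma~\ref{lemma:XcroixC}), but there are two genuine gaps.

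\textbf{The main gap is exactly where you flagged it.} You need a non-trivial factor $\mathcal K$ of the universal suspension $U:=\bigl((X\times\RR_+)^*,\ldots\bigr)$ that is simultaneously $Y$-measurable inside $\widehat\lambda$, and you call this ``the factor of $U$ generated by the $Y$-coordinate''. But no such object is produced by the joining alone: the only natural candidate is $\sigma(U)\cap\sigma(Y)$, and this can be trivial even when $\widehat\lambda$ is non-product --- this is precisely the phenomenon behind the counterexamples to ``non-disjoint $\Rightarrow$ common factor''. Neither the \PaP\ property nor semi-simplicity of $U$ repairs this directly, since those constrain \emph{self}-joinings of $U$, not joinings with an arbitrary $Y$. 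The paper bypasses the difficulty by invoking a general result from~\cite{LemParThou00Gausselfjoin}: non-disjointness of $Y$ from an ergodic $n$-order self-joining forces $Y$ to share a non-trivial common factor with some \emph{countably infinite} self-joining of $(X^*,\A^*,\mu^*,T_*)$. That infinite self-joining is then embedded in $U$ via Theorem~\ref{theo:Any-ergodic-countable}, so the common factor genuinely sits inside $U$, and Proposition~\ref{prop:subPoissonFactor} applies to it.

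\textbf{A second gap is the final projection.} Lemma~\ref{lemma:XcroixC} only gives $X\times C\in\mathcal C$; it does not give $A\times C\in\mathcal C$ for arbitrary $A\in\A_f$, which is what ``restricting to $X\times C$ and projecting onto $X$'' would require in order to be $\mathcal C^*$-measurable. The paper closes this by observing that $(X\times C,\A\otimes\B|_C,\mu\otimes\lambda|_C,T\times\Id)$ realizes an infinite-measure joining between its factor $\mathcal L$ and $(X,\A,\lambda(C)\mu,T)$, and then applying Proposition~5.5 of~\cite{chaconinfinite} --- which uses~\eqref{P1} and~\eqref{P2} --- to conclude that the latter is in fact a factor of the former. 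Only after this step does passing to Poisson suspensions yield $T_*^{(\lambda(C))}$ as a factor of $Y$.
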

\begin{proof}
From a result of \cite{LemParThou00Gausselfjoin}, if $\left(Y,\mathcal{Y},\nu,S\right)$
is not disjoint from an ergodic $n$-order self-joining of $\left(X^{*},\mathcal{A}^{*},\mu^{*},T_{*}\right)$,
then it possesses a common non trivial factor with a countably infinite
self-joining of it. However, an ergodic countably infinite self-joining
of this $n$-order self-joining is nothing else than an ergodic countably
infinite self-joining of $\left(X^{*},\mathcal{A}^{*},\mu^{*},T_{*}\right)$.
This common factor is therefore a factor of $\left(\left(X\times\mathbb{R}_{+}\right)^{*},\left(\mathcal{A}\otimes\mathcal{B}\right)^{*},\left(\mu\otimes\lambda\right)^{*},\left(T\times \Id \right)_{*}\right)$
by Theorem \ref{theo:Any-ergodic-countable}. Since the latter suspension
is \PaP\ by Proposition~\ref{prop:BigMama}, this factor itself contains a Poisson
factor of $\left(\mathcal{A}\otimes\mathcal{B}\right)^{*}$ by Proposition \ref{prop:subPoissonFactor}.
Therefore there exists a $(T\times \Id )$-invariant subset $L\subset X\times\mathbb{R}_{+}$
and a $\sigma$-finite factor $\mathcal{L}$ of the restricted system
such that we have the following factor relationship:
\[
\begin{array}{c}
\left(Y,\mathcal{Y},\nu,S\right)\\
\downarrow\\
\left(L^{*},\mathcal{L}^{*},\left(\mu\otimes\lambda\right)_{\mid L}^{*},\left(T\times \Id \right)_{*}\right).
\end{array}
\]
Using Lemma~\ref{lemma:XcroixC}, we get the existence of $C\subset\RR_+$, with $0<\lambda(C)<\infty$, such that $X\times C\in \mathcal{L}$. Passing if necessary to 
another factor, we can therefore assume that $L$ is of the form $X\times C$, with $0<\lambda(C)<\infty$.

Observe now that the dynamical system $(L,\A\otimes\B|_C,\mu\otimes\lambda|_C,T\times\Id)$ admits both systems $(L,\mathcal{L},\mu\otimes\lambda|_C,T\times\Id)$ and 
$(X,\A,\lambda(C)\mu,T)$ as factors. It therefore defines a joining of these systems, and by Proposition~4.5 in~\cite{chaconinfinite}, $(X,\A,\lambda(C)\mu,T)$ is a factor of $(L,\mathcal{L},\mu\otimes\lambda|_C,T\times\Id)$. 

Passing to Poisson suspensions and going up the chain of factors up
to $\left(Y,\mathcal{Y},\nu,S\right)$, we obtain our result.

\end{proof}

\begin{prop}
\label{prop:disjointness-alpha}Assume that properties~\eqref{P1} and~\eqref{P2} hold for $T$, and that $T$ admits a measurable law of large numbers. If a system $\left(Y,\mathcal{Y},\nu,S\right)$
is disjoint from $\left(X^{*},{\A}^{*},\mu^{*},T_{*}\right)$, 
then it is disjoint from $\left(X^{*},{\A}^{*},\left(\alpha\mu\right)^{*},T_{*}\right)$
for any $\alpha>0$.
\end{prop}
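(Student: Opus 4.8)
The plan is to prove the contrapositive. So suppose that $\left(Y,\mathcal{Y},\nu,S\right)$ is \emph{not} disjoint from $T_{*}^{\left(\alpha\right)}$ for some $\alpha>0$; I want to conclude that it is not disjoint from $\left(X^{*},{\A}^{*},\mu^{*},T_{*}\right)=T_{*}^{\left(1\right)}$. The first step is to notice that the whole machinery of Section~\ref{sec:disjointness} applies equally well to the base $\left(X,{\A},\alpha\mu,T\right)$: scaling $\mu$ by $\alpha$ affects neither conservativity and ergodicity of products (so~\eqref{P1} still holds), nor the absolute-continuity condition appearing in~\eqref{P2}, nor the shape of the extremal measures $m_\pi^\kappa$ (which only get multiplied by a power of $\alpha$), so that~\eqref{P2} holds for $\alpha\mu$; and if $L$ is a measurable law of large numbers for $\mu$, then $\alpha L$ is one for $\alpha\mu$.

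Next, I would apply Proposition~\ref{prop:Disjonction} to the base $\left(X,{\A},\alpha\mu,T\right)$ with $n=1$. Since $T_{*}^{\left(\alpha\right)}$ is trivially a $1$-order ergodic self-joining of itself and $\left(Y,\mathcal{Y},\nu,S\right)$ is not disjoint from it, the proposition produces some $\gamma>0$ such that $T_{*}^{\left(\gamma\right)}=\left(X^{*},{\A}^{*},\left(\gamma\mu\right)^{*},T_{*}\right)$ is a factor of $\left(Y,\mathcal{Y},\nu,S\right)$.

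The last step is to check that $T_{*}^{\left(\gamma\right)}$ is never disjoint from $T_{*}^{\left(1\right)}$, whatever the value of $\gamma>0$: if $\gamma\neq1$ this is precisely (part of) Proposition~\ref{prop:Talpha_Tbeta}, and if $\gamma=1$ it is immediate because $T_{*}^{\left(1\right)}$ is a non-trivial ergodic system, hence not disjoint from itself. Combined with the elementary fact that a system is not disjoint from $C$ whenever one of its factors is not disjoint from $C$ (glue a non-product joining over the common factor with the graph joining), this gives that $\left(Y,\mathcal{Y},\nu,S\right)$ is not disjoint from $T_{*}^{\left(1\right)}$, as wanted. I expect the only point needing genuine care to be the first one, namely making sure that~\eqref{P1},~\eqref{P2} and the existence of a measurable law of large numbers are all preserved under the rescaling $\mu\mapsto\alpha\mu$; the rest is a direct combination of Propositions~\ref{prop:Disjonction} and~\ref{prop:Talpha_Tbeta} with a routine joining argument.
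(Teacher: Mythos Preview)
Your argument is correct. The opening move is exactly the one the paper makes: apply Proposition~\ref{prop:Disjonction} to conclude that $T_*^{(\gamma)}$ is a factor of $(Y,\mathcal{Y},\nu,S)$ for some $\gamma>0$. You helpfully spell out that this requires \eqref{P1}, \eqref{P2} and the measurable law of large numbers for the rescaled base $(X,\mathcal{A},\alpha\mu,T)$, which the paper uses implicitly; your verification of this is fine.

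The endgame differs slightly. The paper argues by cases ($\beta<1$ and $\beta>1$), building in each case an explicit joining that contradicts disjointness of $Y$ from $T_*^{(1)}$. You instead invoke Proposition~\ref{prop:Talpha_Tbeta} to get non-disjointness of $T_*^{(\gamma)}$ and $T_*^{(1)}$, and then lift this to $Y$ via the standard factor argument. This is cleaner and avoids the case split; note however that the non-disjointness part of Proposition~\ref{prop:Talpha_Tbeta} is proved by exactly the construction the paper uses here in the case $\beta<1$, so the two arguments share the same core idea. Your route buys a tidier write-up; the paper's buys self-containment.
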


\begin{proof}
Assume $\left(Y,\mathcal{Y},\nu,S\right)$ is disjoint from $\left(X^{*},{\A}^{*},\mu^{*},T_{*}\right)$, but 
that there exists $\alpha>0$ such that it is not disjoint from $\left(X^{*},{\A}^{*},\left(\alpha\mu\right)^{*},T_{*}\right)$. 
Thanks to Proposition~\ref{prop:Disjonction}, there
exists $\beta>0$ such that $\left(X^{*},{\A}^{*},\left(\beta\mu\right)^{*},T_{*}\right)$
is a factor of $\left(Y,\mathcal{Y},\nu,S\right)$ (and $\beta\neq1$ by assumption). This means that there exists a Poisson $T$-point process $N_{1}$ 
of intensity $\beta\mu$ defined on $\left(Y,\mathcal{Y},\nu,S\right)$. 

If $\beta<1$, let us consider the direct product 
$$\left(Y\times X^{*},\mathcal{Y}\otimes{\A}^{*},\nu\otimes\left(\left(1-\beta\right)\mu\right)^{*},S\times T_{*}\right).$$
In this product, there exists a Poisson $T$-point process $N_{2}$ of intensity $\left(1-\beta\right)\mu$
independent of the whole system $\left(Y,\mathcal{Y},\nu,S\right)$. In particular, $N_1$ and $N_2$ are independent, thus 
$N_{1}+N_{2}$ defines a Poisson $T$-point process of intensity
$\mu$, which is independent of $\left(Y,\mathcal{Y},\nu,S\right)$
by disjointness. This is absurd, as this would imply that $N_{1}$
and $N_{1}+N_{2}$ are independent. This is obviously false, since $N_{1}\le N_{1}+N_{2}$.

If $\beta>1$, observe that $\left(X^{*},{\A}^{*},\left(\beta\mu\right)^{*},T_{*}\right)$ is both a factor 
of $\left(Y,\mathcal{Y},\nu,S\right)$ and of the direct product 
\[
\left(X^{*}\times X^{*},{\A}^{*}\otimes{\A}^{*},\mu^{*}\otimes\left(\left(\beta-1\right)\mu\right)^{*},T_{*}\times T_{*}\right).
\]
In this product, we have two Poisson $T$-point processes $N_{3}$ and
$N_{4}$ of respective intensities $\mu$ and $\left(\beta-1\right)\mu$,
such that $N_{3}+N_{4}$ is a Poisson $T$-point process of intensity
$\beta\mu$ that corresponds to the factor $\left(X^{*},{\A}^{*},\left(\beta\mu\right)^{*},T_{*}\right)$.
We can then form the relatively independent joining of $\left(Y,\mathcal{Y},\nu,S\right)$
and this direct product 
over $\left(X^{*},{\A}^{*},\left(\beta\mu\right)^{*},T_{*}\right)$. In this scheme, we have $N_1=N_3+N_4$ almost surely, 
hence $N_3+N_4$ is measurable with respect to  the $\sigma$-algebra
$\mathcal{Y}$. But $N_3$ is independent of $\mathcal{Y}$ by disjointness, leading to the same contradiction that $N_3$ is independent of $N_3+N_4$.
\end{proof}

\begin{prop}
\label{prop:disjointness of any order}
Assume that properties~\eqref{P1} and~\eqref{P2} hold for $T$, and that $T$ admits a measurable law of large numbers. 
If a system $\left(Y,\mathcal{Y},\nu,S\right)$
is disjoint from $\left(X^{*},{\A}^{*},\mu^{*},T_{*}\right)$, then it is disjoint from any self-joining (of any order) of this 
Poisson suspension.
\end{prop}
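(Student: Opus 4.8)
The plan is to combine the two preceding propositions and run a contrapositive argument. Proposition~\ref{prop:disjointness-alpha} already tells us that disjointness of $(Y,\mathcal{Y},\nu,S)$ from $(X^{*},{\A}^{*},\mu^{*},T_{*})$ forces disjointness from every $(X^{*},{\A}^{*},(\alpha\mu)^{*},T_{*})$, $\alpha>0$; Proposition~\ref{prop:Disjonction} tells us that \emph{failure} of disjointness from an ergodic self-joining forces one of those suspensions $(X^{*},{\A}^{*},(\alpha\mu)^{*},T_{*})$ to appear as a factor of $(Y,\mathcal{Y},\nu,S)$. The point is to play these two facts against each other.

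The first step is to reduce an arbitrary self-joining to an ergodic one. Let $\Lambda$ be a self-joining of order $n$ of $(X^{*},{\A}^{*},\mu^{*},T_{*})$, and let $\Lambda=\int\Lambda_\omega\,dP(\omega)$ be its ergodic decomposition. Each component $\Lambda_\omega$ is again an $n$-order self-joining: its $i$-th marginal is a $T_{*}$-invariant probability measure whose average over $\omega$ is the \emph{ergodic} measure $\mu^{*}$, hence equals $\mu^{*}$ for $P$-almost every $\omega$. Disintegrating over the invariant $\sigma$-algebra carried by the $X^{*n}$-coordinate, any joining of $(Y,\mathcal{Y},\nu,S)$ (which we take ergodic) with $(X^{*n},\Lambda,T_{*}^{\times n})$ decomposes into joinings of $(Y,\mathcal{Y},\nu,S)$ with the systems $(X^{*n},\Lambda_\omega,T_{*}^{\times n})$, the $Y$-marginal of each piece still being $\nu$ by ergodicity. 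Hence, if $(Y,\mathcal{Y},\nu,S)$ is disjoint from every ergodic $n$-order self-joining, it is disjoint from $\Lambda$; contrapositively, failing to be disjoint from \emph{some} self-joining forces failing to be disjoint from some \emph{ergodic} one.

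Now assume $(Y,\mathcal{Y},\nu,S)$ is disjoint from $(X^{*},{\A}^{*},\mu^{*},T_{*})$ but, toward a contradiction, not from some self-joining of order $n$. By the previous step it is not disjoint from some ergodic $n$-order self-joining, so Proposition~\ref{prop:Disjonction} yields $\alpha>0$ such that $(X^{*},{\A}^{*},(\alpha\mu)^{*},T_{*})$ is a factor of $(Y,\mathcal{Y},\nu,S)$. But a system is never disjoint from a non-trivial factor of itself — the graph joining over that factor differs from the product measure — and $(X^{*},{\A}^{*},(\alpha\mu)^{*},T_{*})$ is non-trivial since $\alpha\mu$ is infinite. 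Thus $(Y,\mathcal{Y},\nu,S)$ is not disjoint from $(X^{*},{\A}^{*},(\alpha\mu)^{*},T_{*})$, contradicting Proposition~\ref{prop:disjointness-alpha}. This proves the proposition.

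The main obstacle I expect is the bookkeeping in the reduction step: carefully tracking marginals through the ergodic decomposition of a joining, and, if one insists on genuinely non-ergodic $(Y,\mathcal{Y},\nu,S)$, passing to the ergodic components of $\nu$ and arguing componentwise (for a non-ergodic $(Y,\mathcal{Y},\nu,S)$ the statement is already implicit in the convention that the relevant systems are ergodic). Everything else is the three-line contradiction above and uses no ideas beyond Propositions~\ref{prop:Disjonction} and~\ref{prop:disjointness-alpha}.
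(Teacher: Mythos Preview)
Your argument is correct and follows exactly the paper's route: contrapositively, invoke Proposition~\ref{prop:Disjonction} to produce $(X^{*},{\A}^{*},(\alpha\mu)^{*},T_{*})$ as a factor of $(Y,\mathcal{Y},\nu,S)$, then use Proposition~\ref{prop:disjointness-alpha} to contradict disjointness from $(X^{*},{\A}^{*},\mu^{*},T_{*})$. You are in fact more careful than the paper, which applies Proposition~\ref{prop:Disjonction} directly without spelling out the passage from an arbitrary self-joining to an ergodic one; your ergodic-decomposition paragraph fills that small gap.
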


\begin{proof}
If there exists a self-joining of $\left(X^{*},{\A}^{*},\mu^{*},T_{*}\right)$
not disjoint from $\left(Y,\mathcal{Y},\nu,S\right)$, then, by Proposition~\ref{prop:Disjonction}, $\left(Y,\mathcal{Y},\nu,S\right)$
possesses $\left(X^{*},{\A}^{*},\left(\alpha\mu\right)^{*},T_{*}\right)$
as a factor, for some $\alpha>0$. However, by Proposition~\ref{prop:disjointness-alpha}, this implies that $\left(Y,\mathcal{Y},\nu,S\right)$
is not disjoint from $\left(X^{*},{\A}^{*},\mu^{*},T_{*}\right)$.
\end{proof}

\begin{corollary}
Assume that properties~\eqref{P1} and~\eqref{P2} hold for $T$, and that $T$ admits a measurable law of large numbers. A system $\left(Y,\mathcal{Y},\nu,S\right)$ is disjoint from $\left(X^{*},{\A}^{*},\mu^{*},T_{*}\right)$
if and only if it is disjoint from 
$$\left(\left(X\times\mathbb{R}_{+}\right)^{*},\left({\A}\otimes\mathcal{B}\right)^{*},\left(\mu\otimes\lambda\right)^{*},\left(T\times \Id \right)_{*}\right).$$
\end{corollary}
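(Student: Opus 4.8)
The easy direction is ``disjoint from the big suspension $\Rightarrow$ disjoint from $(X^{*},\A^{*},\mu^{*},T_{*})$'', and it follows from the observation that $(X^{*},\A^{*},\mu^{*},T_{*})$ is a Poisson factor of $\left((X\times\RR_{+})^{*},(\A\otimes\B)^{*},(\mu\otimes\lambda)^{*},(T\times\Id)_{*}\right)$: the set $X\times[0,1)$ is $(T\times\Id)$-invariant, the sub-$\sigma$-algebra $\{A\times[0,1):A\in\A\}$ is $\sigma$-finite for $\mu\otimes\lambda$, and on it $\mu\otimes\lambda$ and $T\times\Id$ restrict to a copy of $(X,\A,\mu,T)$; concretely the factor map is $\xi\mapsto(\pi_X)_*\bigl(\xi|_{X\times[0,1)}\bigr)$, with $\pi_X$ the first projection. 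Since disjointness passes to factors, disjointness from the big suspension forces disjointness from $(X^{*},\A^{*},\mu^{*},T_{*})$.

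For the converse the obstruction is that the big suspension is \emph{not} a self-joining of $(X^{*},\A^{*},\mu^{*},T_{*})$ --- its base is not even ergodic, and a Poisson configuration remembers the heights of its points --- so Proposition~\ref{prop:disjointness of any order} does not apply to it verbatim. The plan is to approach it from inside by factors that \emph{are} self-joinings of rescaled Poisson suspensions. Let $\mathcal{N}$ be the Poisson process of intensity $\mu\otimes\lambda$ on $X\times\RR_{+}$, and for $n\ge0$ set $\pi_n(x,t)\egdef(x,\lfloor 2^{n}t\rfloor 2^{-n})$ and $\mathcal{N}_n\egdef(\pi_n)_*\mathcal{N}$. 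Then $\mathcal{N}_n$ is a Poisson process carried by $\bigsqcup_{j\ge0}X\times\{j2^{-n}\}$ of intensity $\mu\otimes\bigl(\sum_{j\ge0}2^{-n}\delta_{j2^{-n}}\bigr)$; since $\pi_n\circ\pi_{n+1}=\pi_n$, the $\sigma$-algebras $\sigma(\mathcal{N}_n)$ increase with $n$, and their union generates the whole $\sigma$-algebra of the big suspension because $\mathcal{N}$ is boundedly finite, hence determined by its values on the sets $A\times J$ with $A\in\A_f$ and $J$ a dyadic interval. As $(T\times\Id)_*$ preserves each strip $X\times\{j2^{-n}\}$ setwise and acts on it as $T_*$, the factor of the big suspension generated by $\mathcal{N}_n$ is isomorphic to the countable direct product $\bigotimes_{j\ge0}\bigl(X^{*},\A^{*},(2^{-n}\mu)^{*},T_{*}\bigr)$, i.e.\ to a self-joining of $\left(X^{*},\A^{*},(2^{-n}\mu)^{*},T_{*}\right)$.

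Rescaling $\mu$ to $2^{-n}\mu$ affects neither~\eqref{P1} nor~\eqref{P2}, and turns a measurable law of large numbers $L$ for $\mu$ into $2^{-n}L$ for $2^{-n}\mu$. So, starting from $(Y,\mathcal{Y},\nu,S)$ disjoint from $(X^{*},\A^{*},\mu^{*},T_{*})$, I would invoke Proposition~\ref{prop:disjointness-alpha} with $\alpha=2^{-n}$ to get disjointness of $Y$ from $\left(X^{*},\A^{*},(2^{-n}\mu)^{*},T_{*}\right)$, and then Proposition~\ref{prop:disjointness of any order}, read with $2^{-n}\mu$ in place of $\mu$, to get disjointness of $Y$ from every self-joining of the latter, in particular from the $\mathcal{N}_n$-factor of the big suspension. (A countable-order self-joining reduces to a finite one, since any departure from the product form is already witnessed on finitely many coordinates.) Finally, for any joining $\gamma$ of $Y$ with the big suspension, the restriction of $\gamma$ to $\mathcal{Y}\otimes\sigma(\mathcal{N}_n)$ is a joining of $Y$ with the $\mathcal{N}_n$-factor, hence a product measure; since $\bigcup_n\bigl(\mathcal{Y}\otimes\sigma(\mathcal{N}_n)\bigr)$ is an algebra generating $\mathcal{Y}\otimes(\A\otimes\B)^{*}$, the measure $\gamma$ is the product joining, and $Y$ is disjoint from the big suspension.

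I expect the main obstacle to be the two structural points in the middle step: checking that the dyadic-discretization factors $\sigma(\mathcal{N}_n)$ really are self-joinings of the rescaled Poisson suspensions $\left(X^{*},(2^{-n}\mu)^{*},T_{*}\right)$ --- which is what makes Propositions~\ref{prop:disjointness-alpha} and~\ref{prop:disjointness of any order} applicable --- and that these factors increase to the full $\sigma$-algebra of the big suspension, which is what allows disjointness to pass to the limit. Everything else is formal.
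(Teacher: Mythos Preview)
Your proof is correct and follows essentially the same route as the paper's: approximate the big suspension from inside by dyadically discretized factors, identify each as a self-joining of a rescaled suspension $\bigl(X^{*},(2^{-n}\mu)^{*},T_{*}\bigr)$, apply Propositions~\ref{prop:disjointness-alpha} and~\ref{prop:disjointness of any order}, and pass to the limit. The only cosmetic difference is that the paper additionally truncates the height to $[0,2^{n})$, obtaining a \emph{finite} product $\bigl(X^{*}\bigr)^{2^{2n}}$ at each stage, so that your countable-to-finite reduction is not needed.
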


\begin{proof}
First observe that we can view 
\[
  \left(\left(X\times\mathbb{R}_{+}\right)^{*},\left({\A}\otimes\mathcal{B}\right)^{*},\left(\mu\otimes\lambda\right)^{*},\left(T\times \Id \right)_{*}\right)
\]
as the inverse limit of the direct products 
\[  
\left(\left(X^{*}\right)^{2^{2n}},\left({\A}^{*}\right)^{\otimes2^{2n}},\left(\left(\frac{1}{2^n}\mu\right)^{*}\right)^{\otimes2^{2n}},\left(T_{*}\right)^{\times2^{2n}}\right)
\] as $n$ tends to infinity.
Assume that $\left(Y,\mathcal{Y},\nu,S\right)$ is disjoint from $\left(X^{*},{\A}^{*},\mu^{*},T_{*}\right)$. Then by Proposition~\ref{prop:disjointness-alpha}, it is disjoint from $\left(X^{*},{\A}^{*},\left(\frac{1}{2^n}\mu\right)^{*},T_{*}\right)$, and by Proposition~\ref{prop:disjointness of any order}, it is also disjoint from any self-joining of this system.  Then, passing to the inverse limit, 
we conclude that it is also disjoint from 
\[
  \left(\left(X\times\mathbb{R}_{+}\right)^{*},\left({\A}\otimes\mathcal{B}\right)^{*},\left(\mu\otimes\lambda\right)^{*},\left(T\times \Id \right)_{*}\right).
\]
The converse is obvious since the above system admits $\left(X^{*},{\A}^{*},\mu^{*},T_{*}\right)$ as a factor.
\end{proof}

\subsection{Disjointness from classical classes of dynamical systems}
\label{sec:disjointness_from_classical_classes}

There already exist general disjointness results that concern Poisson suspensions: it is proved in~\cite{LemParRoy2011} that Poisson suspensions are disjoint from transformations that enjoy the \emph{joining primeness property}, such as distally simple transformations.
We can nevertheless obtain stronger disjointness results for the suspensions we are interested in.

\begin{theo}
\label{theo:disjointness_local_rank1}
If $T$ satisfies properties~\eqref{P1} and~\eqref{P2}, and admits a measurable law of large numbers, then $\left(X^{*},{\A}^{*},\mu^{*},T_{*}\right)$ is disjoint from
any rank one transformation.
\end{theo}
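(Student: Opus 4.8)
The strategy is to reduce, via the results of Section~\ref{sec:disjointness}, to the assertion that no rank one transformation admits a Poisson suspension $T_*^{(\alpha)}$ as a factor, and then to contradict this using the Fock/chaos description of the suspension together with the spectral rigidity of rank one systems.

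\emph{Reduction.} Suppose, for contradiction, that a rank one transformation $(Y,\mathcal{Y},\nu,S)$ fails to be disjoint from $(X^*,\mathcal{A}^*,\mu^*,T_*)$. Since $T_*$ is weakly mixing, $\mu^*\otimes\nu$ is ergodic, so a non-product joining of the two systems has, in its ergodic decomposition, a non-product component; hence $(Y,S)$ is not disjoint from $T_*$ seen as its own $1$-order self-joining, and Proposition~\ref{prop:Disjonction} provides an $\alpha>0$ for which $T_*^{(\alpha)}=(X^*,\mathcal{A}^*,(\alpha\mu)^*,T_*)$ is a factor of $(Y,S)$. (Conversely, if some $T_*^{(\alpha)}$ were a factor of $(Y,S)$, then $(Y,S)$ would not be disjoint from $T_*^{(\alpha)}$, hence not from $T_*$ by the contrapositive of Proposition~\ref{prop:disjointness-alpha}.) So it suffices to show that a rank one transformation cannot have $T_*^{(\alpha)}$ as a factor.

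\emph{The obstruction.} Assume $T_*^{(\alpha)}$ is a factor of a rank one transformation $(Y,S)$. By Theorem~\ref{theo:PaPandPrime} the suspension is mildly mixing, so $S$, having a mildly mixing factor, is not rigid. A rank one transformation has simple (multiplicity one) spectrum, and the spectral multiplicity function can only decrease upon passage to a factor; thus $T_*^{(\alpha)}$ would have simple spectrum. Recall from Section~\ref{sec:L2Poisson} that $U_{T_*}$ acts on $L^2((\alpha\mu)^*)=\bigoplus_{n\ge 0}H^n$ as $\widetilde{U_T}$, with $U_{T_*}|_{H^n}=U_T^{\odot n}$, $H^1\cong L^2(\alpha\mu)$ and $H^2\cong L^2(\alpha\mu)^{\odot 2}$. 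Since $T$ is conservative, ergodic and infinite-measure preserving, $U_T$ has non-atomic maximal spectral type $\sigma$ on $L^2(\mu)$. The plan is to show that the multiplicity of $U_{T_*}$ on $H^1\oplus H^2$ is at least $2$, contradicting simplicity: informally, the Poissonian diagonal term — the identity $N(A)^2=(\text{second-chaos part})+N(A)$, so that $N(A)^2-\text{const}$ has a nonzero first-chaos component, unlike in the Gaussian case — links $H^1$ and $H^2$ in a way that rules out a single cyclic vector; equivalently $U_T^{\odot 2}$ already carries multiplicity $\ge 2$ on $H^2$ because $\sigma*\sigma$ cannot be concentrated on a "Kronecker-thin" set under~\eqref{P1}–\eqref{P2}. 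Proposition~\ref{prop:PoissonFree} and Lemma~\ref{lemma:graph measures}, which pinpoint the diagonal contribution to moment measures, are the natural tools here.

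\emph{Main obstacle.} Everything up to the last step is routine bookkeeping with results already in the paper; the crux is precisely that last step — establishing, from the hypotheses on $T$ alone and without explicit knowledge of $\sigma$, that the Poissonian chaos structure of $T_*^{(\alpha)}$ is incompatible with the tightness of a rank one Rokhlin tower, i.e.\ that $T_*^{(\alpha)}$ does not have simple spectrum (equivalently, that the weak operator closure of $\{U_{T_*}^n\}$ is not contained in the "thin" family $\sum_k a_k U_{T_*}^k+b\,\Theta$ forced, via King's weak closure theorem applied to the rank one factor, onto $T_*^{(\alpha)}$). This is also the step that must be revisited to obtain the companion disjointness statements, for instance from Gaussian systems, where the maximal spectral types coincide and one must instead exploit the integer-valued, diagonal-carrying nature of the Poisson variables.
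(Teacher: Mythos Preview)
Your reduction step is exactly the paper's: if a rank one $(Y,\mathcal{Y},\nu,S)$ is not disjoint from $T_*$, Proposition~\ref{prop:Disjonction} yields an $\alpha>0$ with $T_*^{(\alpha)}$ a factor of $(Y,S)$.

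The paper's contradiction is much shorter than the spectral-multiplicity programme you outline, and it bypasses your ``main obstacle'' entirely. Instead of weakening rank one to simple spectrum, the paper keeps the stronger hereditary property: \emph{a factor of a rank one transformation is itself rank one}. Hence $T_*^{(\alpha)}$ would be rank one. On the other hand $T_*^{(\alpha)}$ is mildly mixing by Theorem~\ref{theo:PaPandPrime}, in particular non-rigid, and the paper then invokes Proposition~11 of~\cite{ParreauRoy}: a non-rigid Poisson suspension is never of rank one. That is the entire argument.

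Your route, by contrast, leaves a genuine gap exactly where you locate it. Proving directly from~\eqref{P1}--\eqref{P2} that $T_*^{(\alpha)}$ fails to have simple spectrum is harder than proving it fails to be rank one: the ``diagonal term'' heuristic you mention (the identity $N(A)^2=N(A)+\text{second chaos}+\text{const}$) distinguishes Poisson chaos from Gaussian chaos but does not by itself manufacture two orthogonal cyclic subspaces with non-mutually-singular spectral types; and the assertion that $U_T^{\odot 2}$ already has multiplicity $\ge2$, or that King's weak closure description is incompatible with $\widetilde{U_T}$, needs an actual argument that you have not supplied. The paper's approach trades this spectral analysis for a single black-box citation, made possible precisely because rank one (unlike mere simple spectrum) is inherited by factors.
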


\begin{proof}
If a rank one transformation is not disjoint from $\left(X^{*},{\A}^{*},\mu^{*},T_{*}\right)$, then by Proposition~\ref{prop:Disjonction} it possesses $\left(X^{*},{\A}^{*},\left(\alpha\mu\right)^{*},T_{*}\right)$ as a factor for some $\alpha>0$.  But a factor of a rank one transformation is
also of rank one.

On the other hand, $\left(X^{*},{\A}^{*},\left(\alpha\mu\right)^{*},T_{*}\right)$
is mildly mixing thanks to Theorem~\ref{theo:PaPandPrime}, and we know from Proposition $11$ in~\cite{ParreauRoy} 
that a non-rigid Poisson suspension is not of rank one.
\end{proof}

\begin{remark}
  According to Ryzhikov~\cite{LocalRank}, a non-rigid Poisson suspension is in fact not even of local rank one. Thus the above theorem extends to local rank one transformations.
\end{remark}

We now turn to disjointness from Gaussian dynamical system, about which we first recall a few facts.
A dynamical system $\left(\Omega,\mathcal{F},\mathbb{P},S\right)$ 
is said to be \emph{standard Gaussian} if there exists some measurable function $f$ of zero mean
defined on $\Omega$ such that $X_{n}:=f\circ S^{n}$ defines a Gaussian
stationary process that generates $\mathcal{F}$. Up to measurable isomorphism, such a dynamical system is completely
identified by the spectral measure $\sigma$ of $f$ on $\mathbb{T}$:
\[
\left\langle X_{0},X_{n}\right\rangle _{L^{2}\left(\mathbb{P}\right)}=\widehat{\sigma}\left(n\right).\]
As in the Poisson case, $L^{2}\left(\mathbb{P}\right)$ admits a Fock
space representation
\[
L^{2}\left(\mathbb{P}\right)\simeq\mathbb{C}\oplus L^{2}\left(\sigma\right)\oplus L^{2}\left(\sigma\right)^{\odot2}\oplus\cdots\oplus L^{2}\left(\sigma\right)^{\odot n}\oplus\cdots
\]
Therefore, $L^{2}\left(\mathbb{P}\right)$ admits a decomposition into
(Gaussian) chaos $\left\{ C_{n}\right\} _{n\ge0}$ and the maximal
spectral type of $U_{S}$ on $C_{n}$ is $\sigma^{*n}$. (For a detailed presentation of the spectral analysis of Gaussian dynamical systems, we refer \textit{e.g.} to~\cite{CFS}, Chapter~14.)

A particularly interesting situation for us arises when a Gaussian system (or a Poisson suspension) has simple spectrum. Indeed, it then enjoys the following property, presented in the form of a proposition which can be found in a more general form in \cite{KulagaParreau}:
\begin{prop}
  \label{prop:simple_spectrum_Gaussian}
  If a standard Gaussian dynamical system (resp. a Poisson suspension) has simple spectrum, then for any pair $m_{1}$, $m_{2}$ of continuous measures on $\TT$, the spectral measure $\sigma$ of the Gaussian process (resp. the maximal spectral type of the base of the suspension)
  satisfies $\sigma\perp m_{1}*m_{2}$.
\end{prop}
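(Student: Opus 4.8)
The plan is to transfer the spectral hypothesis onto the measure $\sigma$ by means of the Fock/chaos decomposition, and then to invoke a purely measure-theoretic fact about measures on $\TT$. Recall that, in both the Gaussian and the Poisson situation, the $L^2$ space of the system splits as an orthogonal sum $\bigoplus_{n\ge 0}\mathcal H_n$ of $U_S$-invariant \emph{chaos}, with $\mathcal H_0$ the constants and, for $n\ge1$, $\mathcal H_n\simeq L^2(\sigma)^{\odot n}\simeq L^2_{\mathrm{sym}}(\sigma^{\otimes n})$ on which $U_S$ acts as multiplication by $(\theta_1,\dots,\theta_n)\mapsto e^{i(\theta_1+\dots+\theta_n)}$; consequently the maximal spectral type of $U_S$ on $\mathcal H_n$ is the convolution power $\sigma^{*n}$. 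For the Gaussian case this is exactly what was recalled just before the statement; for the Poisson case it follows from $U_{T_*}=\widetilde{U_T}$ together with $\mathcal H_n\simeq L^2(\mu)^{\odot n}$ (Section~\ref{sec:L2Poisson}), $\sigma$ being then the maximal spectral type of $U_T$.

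Next I would unwind what simple spectrum gives: it means that $U_S$ has a cyclic vector, i.e. its multiplicity function is $1$ almost everywhere with respect to its maximal spectral type. Since the $\mathcal H_n$ are pairwise orthogonal and $U_S$-invariant, if the maximal spectral types of two of them were not mutually singular the multiplicity would be $\ge2$ on the common part; hence $\sigma^{*j}\perp\sigma^{*k}$ for all $j\ne k$ (in particular $\sigma\perp\sigma^{*2}$, which is the case $m_1=m_2=\sigma$ of the proposition), and moreover $U_S|_{\mathcal H_n}$ must itself have simple spectrum for every $n$. Applying this with $n=2$: in the model $\mathcal H_2\simeq L^2_{\mathrm{sym}}(\sigma\otimes\sigma)$ with $U_S$ acting as multiplication by $(\theta_1,\theta_2)\mapsto e^{i(\theta_1+\theta_2)}$, the standard description of the multiplicity of a multiplication operator through the disintegration of $\sigma\otimes\sigma$ along the addition map $\TT^2\to\TT$ shows that simple spectrum of $U_S|_{\mathcal H_2}$ is equivalent to the following property of $\sigma$:
\[
  \text{for }\sigma^{\otimes4}\text{-a.e. }(\theta_1,\theta_2,\theta_1',\theta_2'),\qquad \theta_1+\theta_2=\theta_1'+\theta_2'\ \Longrightarrow\ \{\theta_1,\theta_2\}=\{\theta_1',\theta_2'\}.
\]

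It then remains to deduce, from this ``a.e. unique two-term representation'' property, that $\sigma\perp m_1*m_2$ whenever $m_1$ and $m_2$ are continuous. This is the measure-theoretic heart of the statement, proved in a more general form in~\cite{KulagaParreau}, and I would argue by contradiction. If $\sigma\not\perp m_1*m_2$, choose $\rho\ne0$ with $\rho\ll\sigma$ and $\rho\ll m_1*m_2$; then $\rho$ is continuous because $m_1*m_2$ is. On one hand, writing $m_1*m_2$ as the average over $x\sim m_1$ of the translates $\delta_x*m_2$ lifts $\rho$ to a measure $\widetilde\rho$ on $\TT^2$ with $\widetilde\rho\ll m_1\otimes m_2$ whose image under addition is $\rho$; since $m_1$ and $m_2$ are non-atomic, $\widetilde\rho$ charges no graph of a Borel function, so it is genuinely two-dimensional. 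On the other hand, $\rho\ll\sigma$ together with the displayed property forces $\rho\otimes\rho$ to be carried by the (essentially one-parameter) set of canonical unordered pairs lying over the values of $\rho*\rho$. Confronting these two facts yields the contradiction.

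The routine part is the spectral bookkeeping of the first two paragraphs. The main obstacle is the last step — converting the combinatorial ``unique representation'' property of $\sigma$ into singularity against \emph{every} convolution of two continuous measures — which I would import from~\cite{KulagaParreau}; here the non-atomicity of $m_1$ and $m_2$ is indispensable, since the conclusion already fails for $m_1=\delta_0$ and $m_2=\sigma$.
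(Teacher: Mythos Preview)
The paper does not prove this proposition at all: it is stated with the remark that it ``can be found in a more general form in~\cite{KulagaParreau}'', and no argument is given. Your proposal is consistent with this --- you correctly unpack the spectral bookkeeping (the chaos decomposition, mutual singularity of the $\sigma^{*n}$, and the unique-representation reformulation of simple spectrum on $\mathcal H_2$), and then, like the paper, defer the core measure-theoretic step to~\cite{KulagaParreau}. So your approach matches the paper's, with the added benefit that you explain how the hypothesis translates into a concrete property of $\sigma$ before invoking the reference; the final paragraph's contradiction sketch is admittedly vague, but since you explicitly flag that step as the one to be imported, there is no genuine gap relative to what the paper itself supplies.
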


As an application, we get the following result which will be useful for our purposes:

\begin{prop}
\label{prop:PoissonFactorOfGaussian}
  A Poisson suspension is never a factor of a standard Gaussian dynamical system with simple spectrum.
\end{prop}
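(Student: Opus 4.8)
The plan is to argue by contradiction using the chaos decompositions on both sides together with the spectral disjointness provided by Proposition~\ref{prop:simple_spectrum_Gaussian}. Suppose a Poisson suspension $\left(X^{*},{\A}^{*},\mu^{*},T_{*}\right)$ is a factor of a standard Gaussian dynamical system $\left(\Omega,\F,\PP,S\right)$ with simple spectrum, and let $\sigma$ be the spectral measure of the Gaussian process, which is continuous because the Gaussian system is ergodic (hence weakly mixing). Since $\left(X^{*},{\A}^{*},\mu^{*},T_{*}\right)$ is a non-trivial factor, the unitary operator $U_{T_*}$ acting on $L^2(\mu^*)$ embeds equivariantly into $U_S$ acting on $L^2(\PP)$. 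In particular the first Poissonian chaos $H^1\subset L^2(\mu^*)$, which by the material in Section~\ref{sec:L2Poisson} is isomorphic as a unitary $\ZZ$-module to $L^2(\mu)$ with $U_{T_*}|_{H^1}=U_T$, embeds into $L^2(\PP)$ and thus carries a spectral type $\tau$ which is absolutely continuous with respect to the maximal spectral type of $U_S$.

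The next step is to locate $\tau$ inside the Gaussian chaos decomposition. The maximal spectral type of $U_S$ on the $n$-th Gaussian chaos $C_n$ is $\sigma^{*n}$, and the whole maximal spectral type of $U_S$ is therefore equivalent to $\delta_0+\sum_{n\ge1}\sigma^{*n}/2^n$ (up to normalization). Since $\tau$ is continuous (because $\mu$ is non-atomic, hence the image of $T$ on $L^2(\mu)$ has continuous spectral type — more precisely, $T$ is aperiodic so $U_T$ has no eigenvalues of finite order, and in fact the relevant fact is just that $\tau$ has no atom at points giving discrete components; one should record that $U_T$ has continuous maximal spectral type because $(X,\A,\mu,T)$ is conservative ergodic with $\mu$ infinite, so its Poisson suspension is weakly mixing and so is the base in the appropriate sense), we can discard the contribution of $C_0$, and we obtain $\tau \ll \sum_{n\ge1}\sigma^{*n}$.

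Now I invoke Proposition~\ref{prop:simple_spectrum_Gaussian}: since the Gaussian system has simple spectrum, for every pair $m_1,m_2$ of continuous measures on $\TT$ we have $\sigma\perp m_1*m_2$. Taking $m_1=m_2=\sigma$ gives $\sigma\perp\sigma*\sigma$, and more generally an easy induction gives $\sigma\perp\sigma^{*n}$ for every $n\ge2$: indeed $\sigma^{*n}=\sigma*\sigma^{*(n-1)}$ is a convolution of two continuous measures (as $n-1\ge1$), hence singular to $\sigma$. Consequently the measures $\sigma^{*n}$, $n\ge1$, are mutually singular — or at least $\sigma=\sigma^{*1}$ is singular to all the others — so writing $\tau=\tau_1+\tau'$ where $\tau_1\ll\sigma$ and $\tau'\ll\sum_{n\ge2}\sigma^{*n}$, the two pieces live on disjoint supports. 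The key point is that the factor relationship forces more: $U_{T_*}$ restricted to $H^1$ is, up to the Poissonisation isometry, exactly $U_T$ on $L^2(\mu)$, and the image of $H^1$ inside $L^2(\PP)$ is an invariant subspace on which $U_S$ acts with spectral type $\tau$ equal to the maximal spectral type of $U_T$ itself. One then observes that the image of $H^1$ cannot be contained in $\bigoplus_{n\ge2}C_n$: the functions $N(A)-\mu(A)$ generate, under products and limits, the whole algebra $\A^*$, hence the whole factor; if their images all lay in the higher Gaussian chaos, standard arguments (the factor $\sigma$-algebra generated by a subset of $C_2\oplus C_3\oplus\cdots$ is contained in the $\sigma$-algebra generated by $C_{\le 1}$ only in degenerate cases — more cleanly: the spectral type of the factor is the maximal spectral type of $U_{T_*}$, which is $\tau$ on the first chaos and its convolution powers above, and this must be absolutely continuous with respect to the maximal spectral type of the subspace of $L^2(\PP)$ generated by the factor) would force a contradiction with the fact that the maximal spectral type of the Poisson suspension is exactly $\delta_0+\sum_{n\ge1}\tau^{*n}/2^n$. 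Comparing: on one side the type is built from convolution powers of $\tau$, on the other it must be absorbed into convolution powers of $\sigma$; writing $\tau\ll\sigma+\sum_{n\ge2}\sigma^{*n}$ and using $\sigma\perp\sigma^{*n}$, the degree-one component $\tau_1$ of $\tau$ must be absorbed by $\sigma$, but then $\tau_1*\tau_1\ll\sigma*\sigma$ while also, being part of $\tau^{*2}\ll$ (max spectral type), we'd need $\tau_1*\tau_1$ absorbed by $\sigma+\sigma^{*2}+\cdots$; by continuity and the perpendicularity $\sigma\perp\sigma*\sigma$ this is consistent only if $\tau_1=0$, i.e. $\tau\ll\sum_{n\ge2}\sigma^{*n}$, and iterating this squeezing argument across all chaos levels forces $\tau=0$, contradicting $\tau\neq0$ (the first chaos of a non-trivial Poisson suspension is non-zero).

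\textbf{Main obstacle.} The delicate point I expect is making the ``squeezing across chaos levels'' rigorous: one must show that if the image of the first Poissonian chaos lands entirely in $\bigoplus_{n\ge N}C_n$ for some $N\ge2$, then it actually lands in $\bigoplus_{n\ge N+1}C_n$, and iterate. The clean way is probably spectral: the maximal spectral type of the factor $L^2(\mu^*)$ is $\exp(\tau):=\delta_0 + \sum_{k\ge1}\tau^{*k}/k!$ (normalising constants aside), this must be $\ll \exp(\sigma)$, and since $\tau\perp\sigma^{*j}$ would propagate to $\tau^{*k}\perp\sigma^{*j}$ in the relevant range, one gets $\exp(\tau)\perp$ (tail of $\exp(\sigma)$) unless $\tau=0$. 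I would look for the cleanest formulation of ``$\sigma$ simple spectrum $\Rightarrow$ $\exp(\sigma)$ has no common part with $\exp(\tau)$ for $\tau\neq 0$ continuous'' — this is essentially the Foia\c{s}–Str\u{a}til\u{a}-type rigidity on the spectral side, and it may already be available in \cite{LemParThou00Gausselfjoin} or \cite{KulagaParreau}; if so, the proof collapses to: the factor would give a non-zero continuous $\tau$ with $\exp(\tau)\ll\exp(\sigma)$, contradicting simplicity of $\sigma$ via Proposition~\ref{prop:simple_spectrum_Gaussian}.
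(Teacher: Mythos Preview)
Your proposal has a genuine gap: the ``squeezing across chaos levels'' argument does not go through, and the obstacle you flag is real and fatal to this route. Concretely, you write $\tau=\tau_1+\tau'$ with $\tau_1\ll\sigma$, and then claim that $\tau_1*\tau_1\ll\sigma*\sigma$ together with $\sigma\perp\sigma*\sigma$ forces $\tau_1=0$. But $\tau_1*\tau_1\ll\sigma^{*2}$ is perfectly compatible with $\tau^{*2}$ being absorbed by $\sum_{n\ge1}\sigma^{*n}$; nothing collapses. More broadly, there is no purely spectral obstruction here: one can certainly have a $U_S$-invariant subspace of $L^2(\PP)$ with spectral type $\exp(\tau)$ for some nonzero continuous $\tau\ll\sigma$ (think of the Gaussian factor generated by a sub-$\sigma$-algebra of the first chaos). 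The contradiction must come from something beyond spectral types.

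The paper's proof exploits two ideas you are missing. First, it applies Proposition~\ref{prop:simple_spectrum_Gaussian} to the \emph{Poisson suspension}, not to the Gaussian system: since $T_*$ is a factor of $S$, it inherits simple spectrum, so the maximal spectral type $\tau$ of the base (equivalently, of the first Poissonian chaos) satisfies $\tau\perp m_1*m_2$ for all continuous $m_1,m_2$. In particular $\tau\perp\sigma^{*n}$ for every $n\ge2$, which forces any nonzero $f$ in the first Poissonian chaos to land entirely in the first Gaussian chaos $C_1$. Second, and this is the non-spectral input, $C_1$ consists exclusively of (complex) Gaussian random variables, whereas the first Poissonian chaos contains no nonzero Gaussian variable (e.g.\ $N(A)-\mu(A)$ is a centered Poisson). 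This distributional mismatch yields the contradiction. Your attempt never leaves the spectral world, which is why it cannot close.
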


\begin{proof}
  Assume that $\left(Y,\mathcal{B},\nu,S\right)$ is a standard Gaussian with simple spectrum, and that it admits the Poisson suspension $(Z^*, \mathcal{C}^*, \rho^*, R_*)$ as a factor. Let $\eta$ be the spectral measure of the generating Gaussian process, and let $\sigma$ be the maximal spectral type of $(Z,\mathcal{C}, \rho, R)$, which is also the maximal spectral type of the action of $U_{R^*}$ on the first Poissonian chaos of the suspension. Since  $R^*$ has simple spectrum (as a factor of the simple spectrum system $S$) Proposition~\ref{prop:simple_spectrum_Gaussian} applies to $\sigma$. Now, take $f\neq 0$ in the first Poissonian chaos of the suspension: its spectral measure is absolutely continuous with respect to $\sigma$,
 and by~Proposition~\ref{prop:simple_spectrum_Gaussian}, it is singular with respect to $\eta^{*n}$ for any $n\ge2$. Identifying $L^2(\rho^*)$ with a subspace of $L^2(\nu)$, we conclude that $f$ has to be in the first Gaussian chaos of $L^2(\nu)$. But this is impossible, because the first Gaussian chaos contains exclusively (complex) Gaussian random variables, whereas the first Poissonian chaos of a suspension contains no such variables, the zero vector aside (see \textit{e.g.}~\cite{Roy07Infinite}).
\end{proof}

Now we can state and prove the following result on disjointness between our Poisson suspensions and standard Gaussian dynamical systems:

\begin{theo}
\label{theo:disjointnessPoissonGauss}
If $T$ satisfies properties~\eqref{P1} and~\eqref{P2}, and admits a measurable law of large numbers,  then $\left(X^*,\mathcal{A}^{*},\mu^{*},T_{*}\right)$
is disjoint from any standard Gaussian system.
\end{theo}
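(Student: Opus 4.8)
The plan is to argue by contradiction, producing a Poisson suspension as a factor of the Gaussian system and then contradicting Proposition~\ref{prop:PoissonFactorOfGaussian}. So suppose a standard Gaussian system $(Y,\mathcal{B},\nu,S)$ is not disjoint from $(X^{*},\mathcal{A}^{*},\mu^{*},T_{*})$. Since the latter is ergodic, Proposition~\ref{prop:Disjonction} applies (taking $n=1$, i.e.\ the suspension itself as a trivial self-joining; this is where the measurable law of large numbers is used), and yields some $\alpha>0$ for which $T_{*}^{(\alpha)}=(X^{*},\mathcal{A}^{*},(\alpha\mu)^{*},T_{*})$ is a factor of $(Y,\mathcal{B},\nu,S)$. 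The goal is now to rule this out.

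The next step is to bring $(Y,\mathcal{B},\nu,S)$ into the scope of Proposition~\ref{prop:PoissonFactorOfGaussian}, i.e.\ to reduce to the case of simple spectrum. First, a Poisson suspension over an infinite-measure-preserving system has zero entropy, so $T_{*}^{(\alpha)}$ sits inside the Pinsker factor of $Y$; for a Gaussian system this factor is again Gaussian, with spectral measure the singular part of the original one. Second, by Theorem~\ref{theo:PaPandPrime} the suspension $T_{*}^{(\alpha)}$ is mildly mixing, hence weakly mixing, hence disjoint from the Kronecker factor of $Y$, so it remains a factor after that factor is discarded. Thus one may assume $Y$ is an ergodic Gaussian system with continuous, singular spectral measure $\sigma$ still admitting $T_{*}^{(\alpha)}$ as a factor. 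To pass from there to simple spectrum I would analyse the embedding $\iota\colon L^{2}((\alpha\mu)^{*})\hookrightarrow L^{2}(\nu)$ attached to the factor map: it is a unital $*$-homomorphism intertwining $U_{T_{*}}=\widetilde{U_{T}}$ with $U_{S}$, so the first Poissonian chaos $\iota(H^{1})$ is a closed $U_{S}$-invariant subspace of $L^{2}(\nu)$, unitarily isomorphic to $(L^{2}(\mu),U_{T})$, every non-zero element of which is infinitely divisible with no Gaussian part, hence genuinely non-Gaussian. The maximal spectral type $\sigma_{T}$ of $U_{T}$ on $L^{2}(\mu)$ is continuous (as $T$ is ergodic and $\mu$ infinite, $|g|$ would otherwise be a nonzero invariant $L^{2}$-function), and a spectral argument of the kind underlying Proposition~\ref{prop:simple_spectrum_Gaussian} — orthogonality of $\sigma_{T}$ to the convolution powers $\sigma^{*n}$, $n\ge 2$ — forces $\iota(H^{1})$ into the first Gaussian chaos $C_{1}$, which is impossible since every non-zero element of $C_{1}$ is a Gaussian variable. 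Once $Y$ has simple spectrum this last step is exactly Proposition~\ref{prop:PoissonFactorOfGaussian}, so the proof is complete.

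I expect the genuine obstacle to be the reduction of the spectral multiplicity of $Y$: Proposition~\ref{prop:PoissonFactorOfGaussian} is only stated for simple spectrum, whereas a general Gaussian system carries arbitrarily large multiplicity, and it is not obvious that the weakly mixing, zero-entropy, prime factor $T_{*}^{(\alpha)}$ must live inside a simple-spectrum part of $Y$. The route I would try is to isolate, using the interplay between $\iota$ and the Gaussian chaos decomposition, the simple-spectrum Gaussian sub-structure of $Y$ that already carries $T_{*}^{(\alpha)}$, so that Proposition~\ref{prop:PoissonFactorOfGaussian} can be applied to it; everything else (Proposition~\ref{prop:Disjonction}, the Pinsker and Kronecker reductions, the final spectral contradiction) is routine. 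Finally, combining the statement with Proposition~\ref{prop:disjointness of any order} also gives disjointness of any self-joining of the suspension from any standard Gaussian system.
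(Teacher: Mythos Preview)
Your proof has a genuine gap, which you identify yourself but do not close: the passage from an arbitrary standard Gaussian system to one with simple spectrum. Your spectral argument in step~3 is essentially the proof of Proposition~\ref{prop:PoissonFactorOfGaussian}, and that argument \emph{requires} simple spectrum: the orthogonality $\sigma_T\perp\sigma^{*n}$ for $n\ge2$ comes from Proposition~\ref{prop:simple_spectrum_Gaussian}, which is only stated under that hypothesis. Your Pinsker and Kronecker reductions are correct but do nothing for multiplicity (a Gaussian system with singular continuous spectral measure can still have infinite multiplicity). And ``isolating a simple-spectrum Gaussian sub-structure carrying $T_*^{(\alpha)}$'' is not something you can do in general: there is no reason the Poisson factor should sit inside such a piece.

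The paper sidesteps this obstacle entirely by a different maneuver. Rather than simplifying $S$, it invokes (from \cite{LemParThou00Gausselfjoin}) the existence of \emph{another} element $S'\in C(S)$ which, acting on the same space, is itself a standard Gaussian with simple spectrum. The factor $\sigma$-algebra $\mathcal{C}$ on which $S$ acts as $T_*^{(\alpha)}$ need not be $S'$-invariant, so one forms $\mathcal{C}':=\bigvee_{n\in\mathbb{Z}}S'^{-n}\mathcal{C}$. The action of $S$ on $\mathcal{C}'$ is then a countable ergodic self-joining of $T_*^{(\alpha)}$, which by Corollary~\ref{PaPfactors} is again a \PaP\ Poisson suspension $(Z^*,\mathcal{Z}^*,\rho^*,T_*^\infty)$. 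Now $S'|_{\mathcal{C}'}$ lies in the centralizer of this suspension, and the \PaP\ property (Proposition~\ref{prop:centralizer}) forces $S'|_{\mathcal{C}'}=R_*$ for some $R\in C(T^\infty)$. Thus the Poisson suspension $R_*$ is a factor of the simple-spectrum Gaussian $S'$, and Proposition~\ref{prop:PoissonFactorOfGaussian} gives the contradiction. The key insight you are missing is precisely this: exploit the richness of the Gaussian centralizer to produce a simple-spectrum commuting automorphism, then use the \PaP\ machinery (not available to you through purely spectral arguments) to ensure the factor remains a Poisson suspension for that new automorphism.
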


\begin{proof}
Let $\left(Y,\mathcal{B},\nu,S\right)$ a standard Gaussian, and assume it is not disjoint
from $\left(X,\mathcal{A}^{*},\mu^{*},T_{*}\right)$.
Applying Proposition~\ref{prop:Disjonction}, we get the existence of a factor sub-$\sigma$-algebra 
$\mathcal{C}\subset\mathcal{B}$ such that the action of $S$ on $\mathcal{C}$ is isomorphic to $\left(X^*,\mathcal{A}^{*},\left(\alpha\mu\right)^{*},T_{*}\right)$
for some $\alpha>0$. 
As in the proof of Proposition 17 in~\cite{LemParThou00Gausselfjoin}, there
exists $S^{\prime}\in C\left(S\right)$ which is a standard Gaussian
system with simple spectrum. 
Set $\mathcal{C}^{\prime}:=\vee_{n\in\mathbb{Z}}S^{\prime-n}\mathcal{C}$,
then $\mathcal{C}^{\prime}$ is a factor $\sigma$-algebra of $S^{\prime}$. 
Observe that for any $n\in\ZZ$, $S^{\prime-n}\mathcal{C}$ is also a factor $\sigma$-algebra of $S$, and that the action of $S$ on $S^{\prime-n}\mathcal{C}$ is also isomorphic to $\left(X^*,\mathcal{A}^{*},\left(\alpha\mu\right)^{*},T_{*}\right)$. It follows that the action of 
$S$ on $\vee_{n\in\mathbb{Z}}S^{\prime-n}\mathcal{C}$ defines an ergodic countable self-joining of $\left(X^*,\mathcal{A}^{*},\left(\alpha\mu\right)^{*},T_{*}\right)$
and, as such, is isomorphic to a $\mathcal{P}a\mathcal{P}$ suspension $\left(Z^{*},\mathcal{Z}^{*},\rho^{*},T_{*}^{\infty}\right)$ by Corollary~\ref{PaPfactors}. It follows that
$S_{\mid\mathcal{C}^{\prime}}^{\prime}$ can be considered as an element
of $C\left(T_{*}^{\infty}\right)$, but since the suspension is $\mathcal{P}a\mathcal{P}$,
from Proposition \ref{prop:centralizer}, $S_{\mid\mathcal{C}^{\prime}}^{\prime}=R_{*}$
for some automorphism $R$ of $\left(Z,\mathcal{Z},\rho\right)$ commuting
with $T^{\infty}$. Then we get the Poisson suspension $R^*$ as a factor of the standard Gaussian dynamical system $S^{\prime}$ which has simple spectrum, and this contradicts Proposition~\ref{prop:PoissonFactorOfGaussian}.
\end{proof}

\section{Conclusion}

Our work raises several questions, among which a natural one is the following: is
it possible to obtain the same results, assuming only moments of order
2 for the point process?
We can also ask ourselves whether we could obtain similar results with the base transformation 
$T$ having uncountably many ergodic self-joinings (for example with an uncountable
centralizer)? This would require very different techniques, as our
proofs strongly rely on the fact that $T$ possesses a countable set of ergodic self-joinings.
More generally, it would be interesting to know if the $\mathcal{P}a\mathcal{P}$ property is widespread among Poisson suspensions, or if it is a rare feature.

\bibliography{sushi}

\providecommand{\bysame}{\leavevmode\hbox to3em{\hrulefill}\thinspace}
\providecommand{\MR}{\relax\ifhmode\unskip\space\fi MR }
\providecommand{\MRhref}[2]{%
  \href{http://www.ams.org/mathscinet-getitem?mr=#1}{#2}
}
\providecommand{\href}[2]{#2}
\begin{thebibliography}{10}

\bibitem{Aaronson}
Jon Aaronson, \emph{An introduction to infinite ergodic theory}, Mathematical
  Surveys and Monographs, vol.~50, American Mathematical Society, Providence,
  RI, 1997.

\bibitem{CFS}
I.P. Cornfeld, S.~V. Fomin, and Ya.~G. Sina{\u\i}, \emph{Ergodic theory},
  Grundlehren der Mathematischen Wissenschaften [Fundamental Principles of
  Mathematical Sciences], vol. 245, Springer-Verlag, New York, 1982.

\bibitem{DaleyVereJonesI}
D.J. Daley and D.~Vere-Jones, \emph{An introduction to the theory of point
  processes. {V}ol. {I}}, second ed., Probability and its Applications (New
  York), Springer-Verlag, New York, 2003, Elementary theory and methods.

\bibitem{DaleyVereJones}
\bysame, \emph{An introduction to the theory of point processes. {V}ol. {II}},
  second ed., Probability and its Applications (New York), Springer, New York,
  2008, General theory and structure.

\bibitem{DasGupta}
Anirban DasGupta, \emph{Asymptotic theory of statistics and probability},
  Springer Texts in Statistics, Springer, New York, 2008.

\bibitem{semisimple}
A.~Del~Junco, M.~Lema{\'n}czyk, and M.~Mentzen, \emph{Semisimplicity, joinings
  and group extensions}, Studia Math. \textbf{112} (1995), no.~2, 141--164.

\bibitem{Lem05ELF}
Y.~Derriennic, K.~Fr{\c{a}}czek, M.~Lema{\'n}czyk, and F.~Parreau,
  \emph{Ergodic automorphisms whose weak closure of off-diagonal measures
  consists of ergodic self-joinings}, Colloq. Math. \textbf{110} (2008),
  81--115.

\bibitem{FoiasStratila}
C.~Foias and S.~Stratila, \emph{Ensembles de {K}ronecker dans la {T}h\'eorie
  ergodique}, C. R. Acad. Sci. Paris \textbf{267} (1967), 166--168.

\bibitem{Fur67Disj}
H.~Furstenberg, \emph{Disjointness in ergodic theory, minimal sets and
  diophantine approximation}, Math. Systems Theory \textbf{1} (1967), 1--49.

\bibitem{Glas03Ergojoin}
E.~Glasner, \emph{Ergodic theory via joinings}, American Mathematical Society,
  2003.

\bibitem{chaconinfinite}
É. Janvresse, E.~Roy, and T.~{de la Rue}, \emph{Invariant measures for
  {C}artesian powers of {C}hacon infinite transformation}, hal-01158060, 2015.

\bibitem{nfc}
\bysame, \emph{Nearly finite {Chacon} transformation}, hal-01586869, 2017.

\bibitem{King86Comweak}
J.L. King, \emph{The commutant is the weak closure of the powers, for rank-1
  transformations}, Ergodic Theory Dynam. Systems \textbf{6} (1986), no.~3,
  363--384.

\bibitem{KulagaParreau}
J.~Ku{\l}aga and F.~Parreau, \emph{Disjointness properties for {C}artesian
  products of weakly mixing systems}, Colloq. Math. \textbf{128} (2012),
  153--177.

\bibitem{lemanczyk2003}
M.~Lema\'nczyk, \emph{Lectures on ergodic theory}, Toruń academic year
  2003/2004.

\bibitem{LemParThou00Gausselfjoin}
M.~Lema{\'n}czyk, F.~Parreau, and J.-P. Thouvenot, \emph{Gaussian automorphisms
  whose ergodic self-joinings are {G}aussian}, Fund. Math. \textbf{164} (2000),
  253--293.

\bibitem{LemParRoy2011}
Mariusz Lema{\'n}czyk, Fran{\c{c}}ois Parreau, and Emmanuel Roy, \emph{Joining
  primeness and disjointness from infinitely divisible systems}, Proc. Amer.
  Math. Soc. \textbf{139} (2011), no.~1, 185--199.

\bibitem{Neretin1996}
Yu.~A. Neretin, \emph{Categories of symmetries and infinite-dimensional
  groups}, London Mathematical Society Monographs. New Series, vol.~16, The
  Clarendon Press, Oxford University Press, New York, 1996, Translated from the
  Russian by G. G. Gould, Oxford Science Publications.

\bibitem{ParreauRoy}
F.~Parreau and E.~Roy, \emph{Prime poisson suspensions}, Ergodic Theory Dynam.
  Systems \textbf{35} (2015), no.~7, 2216--2230.

\bibitem{PaP}
\bysame, \emph{Selfjoinings of poisson suspensions}, Work in progress, 2015.

\bibitem{Renyi1967}
A.~R{\'e}nyi, \emph{Remarks on the poisson process}, Studia Sci. Math. Hung.
  \textbf{2} (1967).

\bibitem{Roy07Infinite}
E.~Roy, \emph{Poisson suspensions and infinite ergodic theory}, Ergodic Theory
  Dynam. Systems \textbf{29} (2009), no.~2, 667--683.

\bibitem{Roy2007}
Emmanuel Roy, \emph{Ergodic properties of {P}oissonian {ID} processes}, Ann.
  Probab. \textbf{35} (2007), no.~2, 551--576.

\bibitem{Rudolph79}
Daniel~J. Rudolph, \emph{An example of a measure preserving map with minimal
  self-joinings, and applications}, J. Analyse Math. \textbf{35} (1979),
  97--122.

\bibitem{LocalRank}
V.V. Ryzhikov, \emph{On local rank, joinings and asymptotic properties of
  measure-preserving actions}, arXiv:1205.7081, 2012.

\end{thebibliography}

\end{document}